\pgfplotsset{compat=1.16}
\appto{\bibsetup}{\sloppy}
\newtheorem{theorem}{Theorem}[section]
\newtheorem{proposition}[theorem]{Proposition}
\newtheorem{corollary}[theorem]{Corollary}
\newtheorem{conjecture}[theorem]{Conjecture}
\theoremstyle{definition}
\newtheorem{definition}[theorem]{Definition}
\theoremstyle{remark}
\newcommand{\dd}{\,\mathrm{d} }
\numberwithin{equation}{section}
\begin{document}

\title{Numerical Optimization of Eigenvalues of the planar magnetic Dirichlet Laplacian with constant magnetic field}

\author{Matthias Baur}
\address{Institute of Analysis, Dynamics and Modeling,\\
Department of Mathematics,\\
University of Stuttgart,\\
Pfaffenwaldring 57, 70569 Stuttgart, Germany}
\email{matthias.baur@mathematik.uni-stuttgart.de}

\begin{abstract}
We present numerical minimizers for the first seven eigenvalues of the planar magnetic Dirichlet Laplacian with constant magnetic field in a wide range of field strengths. Adapting an approach by Antunes and Freitas, we use gradient descent for the minimization procedure together with the Method of Fundamental solutions for eigenvalue computation. Remarkably, we observe that when the magnetic flux exceeds the index of the target eigenvalue, the minimizer is always a disk. 
\end{abstract}

\maketitle

\section{Introduction}

Let $\Omega \subset \mathbb{R}^2$ be a bounded, open set.  We consider the eigenvalue problem of the magnetic Dirichlet Laplacian with constant magnetic field on $\Omega$, namely
\begin{align} \label{eq:magn_lap_eigvalproblem}
(-i\nabla + A)^2 u &= \lambda u, \qquad \text{in } \Omega,\\
u&=0, \qquad \text{on } \partial \Omega,
\end{align}
where 
\begin{align}
A(x_1,x_2)= \frac{B}{2}\begin{pmatrix}-x_2 \\ x_1\end{pmatrix}, \qquad B\in\mathbb{R}, \nonumber
\end{align} 
is the standard linear vector potential. From a physical point of view, the magnetic Dirichlet Laplacian models a charged quantum-mechanical particle confined to $\Omega$ and interacting with a constant magnetic field of strength $B\in \mathbb{R}$ perpendicular to the plane. Its eigenvalues are the energy levels the particle is allowed to take on. 

For general geometries $\Omega$, the magnetic Dirichlet Laplacian $H_B^\Omega$ is more rigorously realized by Friedrichs extension of the quadratic form 
\begin{align}
a[u] := \int_\Omega |(-i\nabla +A) u |^2 \dd x, \qquad u \in C_0^\infty (\Omega).
\end{align}
The operator $H_B^\Omega$ defined this way is self-adjoint and exhibits purely discrete spectrum with positive eigenvalues of finite multiplicity, accumulating at infinity only. Counting multiplicities, we shall sort the infinite sequence of eigenvalues in increasing order and denote them by
\begin{align}
0 < \lambda_1(\Omega, B) \leq \lambda_2(\Omega, B) \leq ... \leq \lambda_n(\Omega, B)  \leq ...
\end{align}
If the boundary of $\Omega$ is sufficiently smooth, then the eigenvalues of $H_B^\Omega$ are exactly those gained from \eqref{eq:magn_lap_eigvalproblem}, otherwise \eqref{eq:magn_lap_eigvalproblem} has to be understood in an appropriate weak sense. For further details on magnetic Laplacians and Schrödinger operators, we refer to Avron, Herbst and Simon \cite{Avron1978}, Iwatsuka \cite{Iwatsuka1990} and Fournais and Helffer \cite{Fournais2010}.

The operator $H_0^\Omega$, i.e.\ when $B=0$, coincides with the standard Dirichlet Laplacian. In that case, the square roots of its eigenvalues can be interpreted as the fundamental frequencies of a vibrating membrane of the form $\Omega$. The problem of determining these fundamental frequencies dates back as far as the late 19th century and has received wide-spread attention of mathematicians and physicists ever since. Despite the lack of exact expressions for the eigenvalues of $H_0^\Omega$ (except for a handful of explicit domains) certain assertions about the structure of the sequence of eigenvalues can be made. The celebrated Weyl law \cite{Weyl1911, Weyl1912} states that for any domain $\Omega$
\begin{align}
\lambda_n(\Omega,0) \sim \frac{4\pi n}{|\Omega|}, \qquad n\to \infty. 
\end{align}
Later, P\'olya \cite{Polya1954} conjectured that the asymptotic expression in Weyl's law is actually a uniform lower bound, i.e.\
\begin{align} \label{eq:polya_conjecture}
\lambda_n(\Omega,0) \geq \frac{4\pi n}{|\Omega|},  
\end{align}
for any $n\in\mathbb{N}$ and was able to prove his conjecture for so-called tiling domains \cite{Polya1961}. Advances for other specific domain classes have been made since, e.g.\ \cite{Filonov2022a,Laptev1997}, but a proof of the conjecture in full generality remains elusive.

P\'olya's conjectured spectral inequality is known to hold in the cases $n=1$ and $n=2$ for any domain $\Omega$. This is a consequence of the celebrated Faber-Krahn \cite{Faber1923, Krahn1925} and Krahn-Szegö \cite{Krahn1926, Hong1954, Polya1955} theorems which date back to the 1920's. The former theorem states that among domains of equal measure, the disk minimizes $\lambda_1(\Omega,0)$. Before the proof by Faber and Krahn, this was already conjectured decades earlier by Lord Rayleigh in his book \textit{The Theory of Sound} (1894). The Krahn-Szegö theorem states an analogous optimality result for $\lambda_2(\Omega,0)$, namely that among domains of equal measure, $\lambda_2(\Omega,0)$ is minimized by two disjoint disks of equal size. By explicit computation of the eigenvalues of the minimizers, one can easily check that the minimizers and hence any domain $\Omega$ satisfy \eqref{eq:polya_conjecture} for $n=1$ and $n=2$.

For higher eigenvalues one can ask the corresponding spectral shape optimization problem
\begin{align} \label{eq:lambda_n_min_problem_intro_noB}
\min_{\substack{\Omega \text{ open}, \\ |\Omega|=c}} \lambda_n(\Omega, 0).
\end{align}
While the above minimization problem is deceptively easy to write down on paper, it turns out that it is outstandingly hard to solve analytically. So hard, in fact, that \eqref{eq:lambda_n_min_problem_intro_noB} has not been solved for any $n \geq 3$ over the past century. It is conjectured that the minimizer of \eqref{eq:lambda_n_min_problem_intro_noB} is a disk for $n=3$ and a disjoint union of two disks of different size for $n=4$, see \cite{Henrot2006}. At the same time, it has been shown that the disk is never a minimizer of \eqref{eq:lambda_n_min_problem_intro_noB} for any $n\geq 5$, see Berger \cite{Berger2014}. The last decades have seen a rise of interest in problem \eqref{eq:lambda_n_min_problem_intro_noB} and similar spectral shape optimization problems and there is now a wealth of literature on the topic. 

A surprisingly difficult task is already that of establishing the existence of minimizers. For problem \eqref{eq:lambda_n_min_problem_intro_noB}, this has been achieved through a series of papers in the 1990's-2010's \cite{Henrot2000, Buttazzo1993, Bucur2012, Mazzoleni2013, Henrot2018} by relaxing the class of open sets to so-called quasi-open sets. Parallel to these developments, several works have adressed questions of regularity, connectedness and convexity of minimizers, see e.g.\ \cite{Wolf1994, Mazzoleni2017,Mazzoleni2017a, Henrot2018}. For more references on \eqref{eq:lambda_n_min_problem_intro_noB} and related problems, we recommend the survey articles by Henrot \cite{Henrot2003, Henrot2018a} and Ashbaugh and Benguria \cite{Ashbaugh2007} as well as the books by P\'olya and Szegö \cite{Polya1951}, Henrot \cite{Henrot2006}, Henrot et al.\ \cite{Henrot2017}, and Henrot and Pierre \cite{Henrot2018}.

Over the past two decades, spectral shape optimization problems have received increased attention also from the numerical community. Especially in the last decade, the literature on numerical shape optimization for problems of spectral geometry has grown considerably. Numerical studies have revealed many surprising results, sometimes found counterexamples for already existing conjectures and often led to many new conjectures. Numerical investigations of problem \eqref{eq:lambda_n_min_problem_intro_noB} and its Neumann boundary condition counterpart have been carried out by Oudet \cite{Oudet2004} and Antunes and Freitas \cite{Antunes2012}, presenting numerical optimizers up to $n=15$. Surprisingly, they found that the numerically obtained minimizer for $n=13$ shows no obvious symmetries. Analogous numerical studies for three-dimensional and four-dimensional geometries have been conducted in the following \cite{Antunes2017, Antunes2017b}. Robin instead of Dirichlet boundary conditions have been considered in \cite{Antunes2013b, Antunes2016} and diameter instead of measure constraints were discussed in \cite{Bogosel2018}. Other spectral functionals have also received attention, for example sums of consecutive eigenvalues $\lambda_n + \lambda_{n+1}$ \cite{Antunes2013}, quotients $\lambda_n/\lambda_1$ \cite{Antunes2013, Osting2010}, spectral gaps $(\lambda_n - \lambda_{n-1})/\lambda_1$ \cite{Osting2010} and convex combinations of eigenvalues \cite{Osting2013,Osting2013a}. Of course, several numerical studies also focused on other operators and their eigenvalues, e.g.\ Steklov eigenvalues \cite{Antunes2021}, eigenvalues of the Bilaplacian \cite{Antunes2014}, Biharmonic Steklov eigenvalues \cite{SimaoAntunes2013}, $p$-Laplacian eigenvalues \cite{Antunes2019}, Steklov-Lam\'e eigenvalues \cite{Antunes2025} and Dirac eigenvalues \cite{Antunes2024}.

The present paper adapts the methods of Antunes and Freitas \cite{Antunes2012} to numerically investigate the magnetic version of problem \eqref{eq:lambda_n_min_problem_intro_noB}, i.e.\
\begin{align} \label{eq:lambda_n_min_problem_intro}
\min_{\substack{\Omega \text{ open}, \\ |\Omega|=c}} \lambda_n(\Omega, B),
\end{align}
where $B\in\mathbb{R}$. In particular, we are interested in the change of shape of the optimizers with respect to the parameter $B$, the strength of the magnetic field.

Literature on shape optimization for the magnetic Laplacian is relatively sparse compared to the standard Laplacian. Laugesen has considered shape optimization problems for magnetic Laplacians with Neumann boundary conditions, sums of eigenvalues and certain weighted domain normalizations \cite{Laugesen2011, Laugesen2015}. One of the main motivations for our work is the solution of the magnetic shape optimization problem \eqref{eq:lambda_n_min_problem_intro} for $n=1$ in 1996, when Erd\H{o}s proved a generalization of the Faber-Krahn theorem for constant magnetic fields.

\begin{theorem}[Erd\H{o}s \cite{Erdoes1996}] \label{thm:erdos_lambda1}
Let $\Omega \subset \mathbb{R}^2$ be a bounded, open domain. Then, for any $B\geq 0$,
\begin{align}
\lambda_1(\Omega, B) \geq \lambda_1(D, B)
\end{align}
where $D$ is a disk with $|D| = |\Omega|$.
\end{theorem}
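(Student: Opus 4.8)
The plan is to establish a magnetic analogue of Faber--Krahn by a rearrangement argument, producing from the ground state on $\Omega$ a radial test function on the disk whose Rayleigh quotient does not increase. Since complex conjugation conjugates $H_B^\Omega$ to $H_{-B}^\Omega$ without changing the spectrum, I may assume $B\ge 0$. I start from the variational principle $\lambda_1(\Omega,B)=\inf_u a[u]/\|u\|_{L^2(\Omega)}^2$, the infimum being taken over the form domain of $a$, fix a normalized minimizer $u$, and write it in polar form $u=f\,e^{i\phi}$ with $f=|u|\ge 0$ (note $f\in H_0^1(\Omega)$ by the diamagnetic inequality $|\nabla|u||\le|(-i\nabla+A)u|$). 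The pointwise identity $|(-i\nabla+A)u|^2=|\nabla f|^2+f^2\,|\nabla\phi+A|^2$ splits the energy as
\begin{align}
a[u] = \underbrace{\int_\Omega |\nabla f|^2 \dd x}_{=:\,D[f]} + \underbrace{\int_\Omega f^2\,|\nabla\phi+A|^2 \dd x}_{=:\,M[u]}.
\end{align}
Discarding $M[u]$ recovers only the diamagnetic bound $\lambda_1(\Omega,B)\ge\lambda_1(\Omega,0)$, which is too weak: since the field raises the eigenvalue on the disk, $\lambda_1(D,0)\le\lambda_1(D,B)$, so the magnetic term $M[u]$ must be retained and compared, not dropped.

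The ordinary Dirichlet energy $D[f]$ is handled classically. Let $f^*$ be the symmetric decreasing rearrangement of $f$ onto the centred disk $D$ with $|D|=|\Omega|$. Equimeasurability preserves the $L^2$ norm, $\|f^*\|_{L^2(D)}=\|f\|_{L^2(\Omega)}=\|u\|_{L^2(\Omega)}$, while the Pólya--Szegő inequality gives $D[f^*]\le D[f]$. The whole problem therefore reduces to producing a complex test function on $D$ with modulus $f^*$ whose magnetic term does not exceed $M[u]$.

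The key is a lower bound for $M[u]$ depending only on the distribution function $\mu(t):=|\{f>t\}|$. The field $v:=\nabla\phi+A$ satisfies $\operatorname{curl} v=\operatorname{curl} A=B$ away from the (possible) zeros of $u$, and by Stokes' theorem its circulation around almost every level curve $\Gamma_t=\{f=t\}$ equals $\oint_{\Gamma_t} v\cdot\hat T\,\dd s = 2\pi k_t + B\,\mu(t)$, where $k_t\in\mathbb{Z}$ is the winding of the phase. Combining the coarea formula, the Cauchy--Schwarz inequality along each $\Gamma_t$, and the isoperimetric inequality $L(\Gamma_t)\ge 2\sqrt{\pi\mu(t)}$, one bounds $M[u]$ from below by an explicit functional $\mathcal L(\mu)$ of the distribution alone. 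On the disk this lower bound is attained: taking $u^*=f^*\,e^{im\theta}$ in the symmetric gauge, the concentric level curves saturate both the isoperimetric and the Cauchy--Schwarz steps, so that $M[u^*]=\mathcal L(\mu)$ once the integer $m$ is chosen to minimise $\int_0^R (f^*(r))^2\,(m/r+Br/2)^2\,2\pi r\,\dd r$. Hence $M[u^*]\le M[u]$.

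Putting the two estimates together, the competitor $u^*$ on $D$ satisfies, for the corresponding energy $a_D$ on $D$,
\begin{align}
a_D[u^*] = D[f^*] + M[u^*] \le D[f] + M[u] = a[u] = \lambda_1(\Omega,B)\,\|u^*\|_{L^2(D)}^2,
\end{align}
and the variational principle on $D$ yields $\lambda_1(D,B)\le a_D[u^*]/\|u^*\|_{L^2(D)}^2\le\lambda_1(\Omega,B)$, as claimed. The main obstacle is the magnetic-term estimate of the third step. Three points are delicate: first, the phase $\phi$ is only locally defined and may be singular where $u$ vanishes --- and, unlike the non-magnetic ground state, a magnetic ground state can genuinely have zeros --- so the polar decomposition and the winding bookkeeping must be justified in a weak, measure-theoretic sense; second, the winding numbers $k_t$ may depend on the level $t$, whereas the disk competitor carries a single angular momentum $m$, so one must argue that the quantised circulations combine to give a bound no weaker than the single-$m$ disk value; and third, one must verify that the concentric-circle geometry of the disk simultaneously saturates the isoperimetric inequality and the Cauchy--Schwarz step, so that $\mathcal L(\mu)$ is actually realised there. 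Establishing this magnetic rearrangement inequality is the genuinely new ingredient beyond the classical Faber--Krahn proof.
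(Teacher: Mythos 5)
The paper does not prove this statement: it is quoted as Theorem~1.1 directly from Erd\H{o}s' 1996 article, so there is no in-paper proof to compare against. Your sketch does reproduce the correct skeleton of Erd\H{o}s' actual argument --- polar decomposition $u=fe^{i\phi}$, the splitting $a[u]=D[f]+M[u]$, the observation that the diamagnetic bound alone is too weak, and the level-set comparison in which the circulation of $\nabla\phi+A$ around $\{f=t\}$ equals $2\pi k_t+B\mu(t)$ --- so as a strategy outline it is on target. But it is not a proof: the entire content of the theorem is concentrated in the ``magnetic rearrangement inequality'' of your third step, which you assert and then yourself list three reasons to doubt.

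Two of those difficulties are not merely technical. First, the functional $\mathcal L(\mu)$ you obtain by applying Cauchy--Schwarz and minimizing over $k_t\in\mathbb{Z}$ \emph{independently at each level} is in general \emph{strictly smaller} than the magnetic energy of any single-$m$ competitor $f^*e^{im\theta}$ on the disk: once $B\mu(t)/2\pi$ crosses a half-integer as $t$ decreases, the level-wise optimal integer changes, whereas the disk test function is locked to one $m$ for all levels. Hence $M[u^*]\le\mathcal L(\mu)\le M[u]$ does not close as a chain; one must show that the quantized circulations cannot conspire across levels to beat every fixed-$m$ disk mode, which is the heart of Erd\H{o}s' paper. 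Second, the Dirichlet and magnetic terms are coupled through $F(t)=\oint_{\{f=t\}}|\nabla f|\,d\sigma$: the coarea/Cauchy--Schwarz bounds give $D[f]\ge\int F(t)\,dt$ and $M[u]\ge\int t^2(2\pi k_t+B\mu(t))^2/F(t)\,dt$ with $F(t)\ge F^*(t)=4\pi\mu(t)/(-\mu'(t))$, and since $x\mapsto x+c^2/x$ is not monotone, $F\ge F^*$ does not imply that replacing $f$ by its equimeasurable rearrangement $f^*$ decreases the sum. Applying P\'olya--Szeg\H{o} to $D[f]$ and a separate bound to $M[u]$, as you propose, therefore cannot work term by term; one has to pass to a one-dimensional variational problem in the profile and optimize it on the disk rather than transplant the profile of $u$. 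Until these two points are resolved, the argument establishes only the (correct) reduction of the theorem to an unproved isoperimetric lemma.
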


A quantitative version of Erd\H{o}s' theorem has recently been published \cite{Ghanta2024}. As far as we are aware, there is no result or even conjecture for minimizers of \eqref{eq:lambda_n_min_problem_intro} for $n\geq 2$, not to mention that the question about existence of minimizers is unsettled. Our numerical results allow us to formulate new conjectures, in particular for a magnetic Krahn-Szegö theorem and the behaviour of the minimizers in case of strong fields. 

The paper is structured as follows: In Section \ref{sec:prel_prop}, we first recall various properties of the eigenvalues $\lambda_n(\Omega, B)$. We then present a result on the shape of disconnected minimizers. Section \ref{sec:num_meth} summarizes the numerical methods we apply to solve \eqref{eq:lambda_n_min_problem_intro}. In particular, for the eigenvalue computation, we adapt the so-called Method of Fundamental Solutions (MFS) to the magnetic Laplacian with constant magnetic field. Our numerical results are presented in Section \ref{sec:results}. We present two benchmark tests for the accuracy of the MFS and discuss numerical minimizers of \eqref{eq:lambda_n_min_problem_intro} with the constraint $|\Omega|=1$ for $n=1$ to $7$ and $B=1.0$ to $80.0$. In the appendix, we discuss the shape optimization problem \eqref{eq:lambda_n_min_problem_intro} among the restricted class of domains that consist of disjoint unions of disks.

\section{Preliminary propositions} \label{sec:prel_prop}

First, let us collect some basic properties of the eigenvalues $\lambda_n(\Omega, B)$.

\begin{proposition} \label{prop:simple_est_domain_mono} Let $\Omega \subset \mathbb{R}^2$ be a bounded, open domain. Then:
\begin{enumerate}
\item The eigenvalues $\lambda_n(\Omega,B)$ are invariant under rigid transformations of $\Omega$.
\item For any $B\geq 0$, 
\begin{align}
\lambda_1(\Omega,B) \geq \max\{ \lambda_1(\Omega,0) , B \}.
\end{align}
\item For any $n \in\mathbb{N}$, the map $B \mapsto \lambda_n(\Omega, B)$ is piecewise real-analytic.
\item If $\Omega'$ is a domain with $\Omega' \subset \Omega$, then 
\begin{align}
\lambda_n(\Omega',B) \geq \lambda_n(\Omega,B)
\end{align}
for any $n \in\mathbb{N}$.
\end{enumerate}
\end{proposition}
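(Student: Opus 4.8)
The plan is to treat the four assertions separately, leaning throughout on the variational (min--max) characterization of $\lambda_n(\Omega,B)$ through the form $a$. For part (1), the point is that a \emph{constant} field is geometrically invariant under rigid motions even though the linear gauge $A$ is not. Given a rigid motion $T(x)=Rx+b$ with $R\in SO(2)$, I would pull the potential back under $T$ and check that the pulled-back potential has the same curl $B$ as $A$ (here orientation-preservation, $\det R=1$, is used). Since any two vector potentials on the simply connected plane with equal curl differ by a gradient, the pulled-back potential can be written as $A+\nabla\phi$ for an explicit scalar function $\phi$, which turns out to be linear in $x$. The unitary map $u\mapsto e^{i\phi}\,(u\circ T^{-1})$ then intertwines $H_B^\Omega$ with $H_B^{T(\Omega)}$ and preserves $a$, so the two operators are unitarily equivalent and share their spectra.

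For part (2), I would prove the two lower bounds independently. The inequality $\lambda_1(\Omega,B)\ge\lambda_1(\Omega,0)$ is a direct consequence of the diamagnetic inequality $\bigl|\nabla|u|\bigr|\le\bigl|(-i\nabla+A)u\bigr|$, valid a.e., which gives
\begin{align}
a[u]=\int_\Omega|(-i\nabla+A)u|^2\dd x\ge\int_\Omega\bigl|\nabla|u|\bigr|^2\dd x\ge\lambda_1(\Omega,0)\,\|u\|_{L^2}^2 \nonumber
\end{align}
for every admissible $u$; minimizing the Rayleigh quotient yields the claim. For $\lambda_1(\Omega,B)\ge B$, I would extend each $u\in H_0^1(\Omega)$ by zero to $\mathbb{R}^2$, which preserves both $a[u]$ and $\|u\|_{L^2}$, and invoke the fact that the bottom of the spectrum of the free magnetic Laplacian $H_B^{\mathbb{R}^2}$ is the lowest Landau level $B$ (for $B\ge0$); hence the Rayleigh quotient on $\Omega$ is bounded below by $B$.

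For part (3), expanding the square gives
\begin{align}
a[u]=\int_\Omega|\nabla u|^2\dd x+B\,\ell[u]+\frac{B^2}{4}\int_\Omega|x|^2|u|^2\dd x, \nonumber
\end{align}
with $\ell$ a Hermitian form independent of $B$, so $B\mapsto H_B^\Omega$ is a polynomial, hence entire, family of self-adjoint operators acting on the fixed form domain $H_0^1(\Omega)$. This is a holomorphic family of type (B) in the sense of Kato; analytic perturbation theory then organizes the eigenvalues into real-analytic branches in $B$, and re-sorting them in increasing order can only introduce corners at crossings, which is exactly piecewise real-analyticity.

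Part (4) follows at once from min--max: extension by zero embeds $H_0^1(\Omega')$ isometrically into $H_0^1(\Omega)$ while preserving $a$, so every $n$-dimensional trial subspace competing for $\lambda_n(\Omega',B)$ also competes for $\lambda_n(\Omega,B)$; taking the infimum over the larger family of subspaces gives $\lambda_n(\Omega,B)\le\lambda_n(\Omega',B)$. The main obstacle is the gauge bookkeeping in part (1): in contrast to the nonmagnetic case, invariance under \emph{translations} is not automatic, since the linear gauge distinguishes the origin, so one genuinely has to construct the intertwining phase $\phi$ and verify the unitary equivalence by hand rather than cite it.
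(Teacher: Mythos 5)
Your proposal is correct and, for parts (1), (3) and (4), follows essentially the same route as the paper: gauge invariance under rigid motions, Kato's theory of type (B) holomorphic families, and the variational principle via extension by zero, respectively. Your added detail in (1) (constructing the linear intertwining phase $\phi$ explicitly) and in (3) (expanding $a[u]$ as a quadratic polynomial in $B$ to exhibit the holomorphic family) is exactly the substance the paper leaves implicit. The one genuine divergence is the bound $\lambda_1(\Omega,B)\ge B$ in part (2): the paper cites a commutator estimate, i.e.\ $B\Vert u\Vert^2 = -i\langle u,[\Pi_1,\Pi_2]u\rangle \le \Vert \Pi_1 u\Vert^2+\Vert \Pi_2 u\Vert^2 = a[u]$ with $\Pi_j=-i\partial_j+A_j$, which works directly on $\Omega$; you instead extend by zero and invoke $\inf\operatorname{spec}(H_B^{\mathbb{R}^2})=B$, the lowest Landau level. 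Both are valid and standard; your version outsources the work to a known spectral fact about the whole-space Landau Hamiltonian (itself usually proved by the very same commutator argument), while the paper's cited route is self-contained at the form level. No gaps.
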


\begin{proof}
1. Consider an arbitrary rigid transformation. After reversing the rigid transformation by shifting and rotating the coordinate system, one is left with the original domain and a vector potential in a different gauge. The eigenvalues $\lambda_n(\Omega, B)$ are however invariant under gauge transformations of the vector potential with which $H_B^\Omega$ is realized. 
2. The estimate $\lambda_1(\Omega,B) \geq  \lambda_1(\Omega,0) $ is a consequence of the diamagnetic inequality, see \cite{Cycon1987, Avron1978, Fournais2010}. The second estimate follows from a commutator estimate \cite{Ekholm2016, Fournais2010, Avron1978}.
3. The operators $\{ H_B^\Omega \}$ form a type (B) self-adjoint holomorphic family. The spectrum of the magnetic Dirichlet Laplacian over $B$ can be described in terms of a countable set of real-analytic eigenvalue curves, see Kato \cite[Chapter VII \S 3 and \S 4]{Kato1995}. Sorting the eigenvalues by size yields that $B\mapsto \lambda_n(\Omega,B)$ is piecewise real-analytic with the pieces joined together continuously.
4. Follows from the variational principle.
\end{proof}

An asymptotically sharp spectral inequality for higher eigenvalues of the magnetic Laplacian with constant magnetic field is due to Erd\H{o}s, Loss and Vougalter.

\begin{theorem}[Erd\H{o}s-Loss-Vougalter \cite{Erdoes2000}] \label{thm:erdoes_li_yau}
Let $\Omega \subset \mathbb{R}^2$ be a bounded, open domain and $B\geq 0$. Then for any $n\in\mathbb{N}$,
\begin{align} \label{eq:ineq_erdoes_li_yau}
\sum\limits_{k=1}^n \lambda_k(\Omega, B) \geq \frac{2\pi n^2}{|\Omega|}. 
\end{align}
\end{theorem}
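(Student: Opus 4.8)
The plan is to adapt the Berezin--Li--Yau method to the magnetic setting, replacing the Fourier transform (which diagonalizes the free Laplacian) by the spectral decomposition of the full-plane magnetic Laplacian $H_B$ on $L^2(\mathbb{R}^2)$ into Landau levels. I shall carry out the argument for $B>0$; the case $B=0$ is the classical Li--Yau inequality and follows by the same scheme with the Fourier transform in place of the Landau decomposition. Recall that for $B>0$ the operator $H_B$ on $\mathbb{R}^2$ has spectrum $\{E_j = B(2j+1) : j=0,1,2,\dots\}$, each value of infinite multiplicity, with associated orthogonal projections $\Pi_j$ onto the $j$-th Landau level. The single structural fact that makes the whole argument work is that the diagonal of each projection kernel is constant, $\Pi_j(x,x) = \tfrac{B}{2\pi}$, which is the magnetic analog of the uniform plane-wave density in the non-magnetic case.

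First I would take an $L^2(\Omega)$-orthonormal system of eigenfunctions $u_1,\dots,u_n$ of $H_B^\Omega$ associated with $\lambda_1,\dots,\lambda_n$ and extend each by zero to all of $\mathbb{R}^2$. Since these extensions lie in the form domain of $H_B$ and vanish outside $\Omega$, one has $\langle u_k, H_B u_k\rangle = \lambda_k$, so that $\sum_{k=1}^n \lambda_k = \sum_{j=0}^\infty E_j\, a_j$ with $a_j := \sum_{k=1}^n \langle u_k, \Pi_j u_k\rangle \ge 0$. Two constraints on the sequence $(a_j)$ then follow. Completeness $\sum_j \Pi_j = I$ together with normalization gives $\sum_{j=0}^\infty a_j = n$. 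For the upper bound, I would write $a_j = \sum_k \langle u_k, \chi_\Omega \Pi_j \chi_\Omega u_k\rangle$ using that each $u_k$ is supported in $\Omega$; since the $u_k$ form an orthonormal system and $\chi_\Omega \Pi_j \chi_\Omega$ is a nonnegative operator, summing its quadratic form over this system is bounded by its trace, whence $a_j \le \operatorname{Tr}(\chi_\Omega \Pi_j \chi_\Omega) = \int_\Omega \Pi_j(x,x)\dd x = \tfrac{B}{2\pi}|\Omega| =: M$.

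It then remains to minimize the linear functional $\sum_j E_j a_j$ over all sequences with $\sum_j a_j = n$ and $0 \le a_j \le M$. Because the energies $E_j$ increase in $j$, the bathtub principle places the mass on the lowest Landau levels: $a_j = M$ for $j < J := \lfloor n/M\rfloor$, then $a_J = n - JM$, and $a_j=0$ otherwise. Writing $n = JM + s$ with $0 \le s < M$ and using $\sum_{j=0}^{J-1}(2j+1) = J^2$, a direct computation reduces the resulting lower bound to the elementary identity $\tfrac1B\big(\sum_j E_j a_j\big) - \tfrac{n^2}{M} = s\big(1 - \tfrac{s}{M}\big) \ge 0$, which yields $\sum_{k=1}^n \lambda_k \ge \tfrac{Bn^2}{M} = \tfrac{2\pi n^2}{|\Omega|}$; the nonnegative excess $Bs(1-s/M)$ also explains the asymptotic sharpness, as it vanishes in the regime where $n$ is an integer multiple of $M$.

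The step I expect to be the main obstacle is establishing the pointwise identity $\Pi_j(x,x) = \tfrac{B}{2\pi}$ and the attendant trace computation rigorously, i.e. controlling the Landau projection kernels and justifying $\operatorname{Tr}(\chi_\Omega \Pi_j \chi_\Omega) = \int_\Omega \Pi_j(x,x)\dd x$. One must also verify carefully that the zero-extended eigenfunctions genuinely belong to the form domain of the full-plane operator and that no boundary contributions spoil the identity $\langle u_k, H_B u_k\rangle = \lambda_k$; these are the points where the Dirichlet condition and the regularity of $\partial\Omega$ enter.
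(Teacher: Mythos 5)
The paper does not prove this theorem but cites it from Erd\H{o}s--Loss--Vougalter, and your argument is precisely the one in that reference: the Berezin--Li--Yau scheme with the Fourier transform replaced by the Landau-level decomposition, the constant diagonal $\Pi_j(x,x)=B/2\pi$ playing the role of the uniform plane-wave density, and the bathtub principle yielding the bound together with the excess term $Bs(1-s/M)$ that the paper alludes to as the ``slight improvement for finite $n$.'' The proposal is correct and takes essentially the same approach as the cited source.
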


The proof by Erd\H{o}s, Loss and Vougalter actually allows for a slight improvement for finite $n$ \cite{Barseghyan2016}.

%The proof by Erd\H{o}s, Loss and Vougalter actually allows for the improved estimate
%\begin{align}
%\sum\limits_{k=1}^n \lambda_k(\Omega, B) \geq \frac{2\pi n^2}{|\Omega|} + \frac{B^2|\Omega|}{2\pi} m (1-m),
%\end{align}
%where $m = \left\{ \frac{2\pi n}{B |\Omega|} \right\}$ is the fractional part of $\frac{2\pi n}{B |\Omega|}$, see . 
In non-magnetic case ($B=0$), the spectral inequality \eqref{eq:ineq_erdoes_li_yau} is usually known as the Li-Yau inequality \cite{ Li1983, Frank2022} and is equivalent to the equally well-known Berezin inequality \cite{Frank2022, Berezin1973} via Legendre transformation.

Theorem \ref{thm:erdoes_li_yau} immediately implies a P\'olya-type spectral inequality.

\begin{corollary} \label{cor:magnetic_polya}
Let $\Omega \subset \mathbb{R}^2$ be a bounded, open domain and $B\geq 0$. Then for any $n\in\mathbb{N}$,
\begin{align}
\lambda_n(\Omega, B) \geq \frac{2\pi n}{|\Omega|}.  \label{eq:magnetic_polya}
\end{align}
\end{corollary}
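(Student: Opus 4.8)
The plan is to derive Corollary~\ref{cor:magnetic_polya} from Theorem~\ref{thm:erdoes_li_yau} by a monotonicity argument. The key observation is that the sum inequality \eqref{eq:ineq_erdoes_li_yau} controls the average of the first $n$ eigenvalues, and since the eigenvalues are sorted in increasing order, the largest one in the sum, $\lambda_n(\Omega,B)$, dominates this average.

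First I would fix a domain $\Omega$ and a field strength $B\geq 0$ and recall that, by our ordering convention, $\lambda_k(\Omega,B)\leq \lambda_n(\Omega,B)$ for every $k\in\{1,\dots,n\}$. Summing this trivial chain of inequalities over $k$ yields
\begin{align}
\sum_{k=1}^n \lambda_k(\Omega, B) \leq n\,\lambda_n(\Omega, B). \nonumber
\end{align}
Next I would invoke Theorem~\ref{thm:erdoes_li_yau}, which bounds the same sum from below by $2\pi n^2/|\Omega|$. Chaining the two estimates gives
\begin{align}
\frac{2\pi n^2}{|\Omega|} \leq \sum_{k=1}^n \lambda_k(\Omega, B) \leq n\,\lambda_n(\Omega, B). \nonumber
\end{align}
Dividing through by $n>0$ produces exactly \eqref{eq:magnetic_polya}, completing the argument.

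There is essentially no obstacle here: the entire content of the corollary is already contained in Theorem~\ref{thm:erdoes_li_yau}, and the passage from the sum bound to the individual eigenvalue bound is the standard device used to deduce a P\'olya-type inequality from a Li--Yau-type inequality. The only point requiring minimal care is that the monotone ordering of the eigenvalues must be used in the correct direction, so that $\lambda_n$ (the largest term) bounds the average from above; replacing $\lambda_n$ by any smaller $\lambda_k$ would break the final step. Everything is elementary once the sorted-ordering and the sum inequality are in hand.
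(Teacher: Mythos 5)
Your argument is correct and is precisely the standard deduction the paper intends when it says the corollary follows ``immediately'' from Theorem~\ref{thm:erdoes_li_yau}: bound the sum above by $n\,\lambda_n(\Omega,B)$ using the monotone ordering and divide by $n$. No difference in approach and no gaps.
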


Note that there appears an excess factor of $1/2$ compared to the non-magnetic conjecture \eqref{eq:polya_conjecture}. Corollary \ref{cor:magnetic_polya} shows that any eigenvalue $\lambda_n(\Omega,B)$ has a non-zero lower bound among domains of equal measure. The sharpness of \eqref{eq:magnetic_polya} as $B\to \infty$ has been subject of papers by Frank \cite{Frank2009} and by the author and Weidl \cite{Baur2025}. The shape optimization problem \eqref{eq:lambda_n_min_problem_intro} naturally arises when one asks how much the estimate \eqref{eq:magnetic_polya} can actually be improved for fixed $n$ and $B$. 

Another important fact to recognize is that the size of the domain and the effect of the magnetic field are intimately connected. It is easy to verify that the following scaling identity holds for the eigenvalues $\lambda_n(\Omega,B)$.
\begin{proposition} \label{prop:scaling_identity}
For any $t>0$,
\begin{align}
\lambda_n\left(t\Omega, \frac{B}{t^2}\right) = \frac{\lambda_n(\Omega,B)}{t^2}.
\end{align}
\end{proposition}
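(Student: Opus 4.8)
The plan is to use the variational (min--max) characterization of the eigenvalues together with the explicit dilation map $u \mapsto u(\cdot/t)$, exploiting the fact that the linear vector potential scales in exactly the right way. Write $A_B(x) = \frac{B}{2}(-x_2, x_1)$ for the vector potential associated with field strength $B$, and let $a_B^\Omega[u] = \int_\Omega |(-i\nabla + A_B)u|^2\,\mathrm{d}x$ denote the corresponding quadratic form. For a function $u$ on $\Omega$, I would define its dilate $v$ on $t\Omega$ by $v(y) := u(y/t)$.

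First I would record the decisive algebraic identity. Since the vector potential is linear, $A_{B/t^2}(y) = \frac{B}{2t^2}(-y_2, y_1)$ evaluated at $y = tx$ equals $\frac{1}{t}A_B(x)$. Combined with the chain rule $\nabla_y v(y) = \frac{1}{t}(\nabla u)(x)$, this gives the pointwise relation
\begin{align}
(-i\nabla_y + A_{B/t^2}(y))\,v(y) = \frac{1}{t}\,\bigl[(-i\nabla + A_B)u\bigr](x), \qquad x = y/t. \nonumber
\end{align}
This is the heart of the matter: the factor $1/t^2$ in the field compensates precisely for the factor $t$ produced by rescaling the coordinate in $A_B$ and the factor $1/t$ produced by differentiating $v$.

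Next I would transport this to the level of the quadratic form and the $L^2$-norm. Taking squared modulus and changing variables $y = tx$ (so $\mathrm{d}y = t^2\,\mathrm{d}x$), the two factors $t^2$ and $1/t^2$ cancel, yielding $a_{B/t^2}^{\,t\Omega}[v] = a_B^\Omega[u]$, while the same substitution gives $\|v\|_{L^2(t\Omega)}^2 = t^2\|u\|_{L^2(\Omega)}^2$. Hence the Rayleigh quotient transforms exactly by the factor $1/t^2$. Since $u \mapsto v$ is a linear bijection from $C_0^\infty(\Omega)$ onto $C_0^\infty(t\Omega)$ (extending to a bijection of the respective form domains), it maps $n$-dimensional subspaces to $n$-dimensional subspaces, and the min--max formula
\begin{align}
\lambda_n(\Omega', B') = \min_{\substack{V \subset \operatorname{dom} a_{B'}^{\Omega'} \\ \dim V = n}} \ \max_{0 \neq w \in V} \frac{a_{B'}^{\Omega'}[w]}{\|w\|_{L^2(\Omega')}^2} \nonumber
\end{align}
immediately delivers $\lambda_n(t\Omega, B/t^2) = t^{-2}\lambda_n(\Omega, B)$, as claimed (using part 4 of Proposition \ref{prop:simple_est_domain_mono}, i.e.\ the variational principle).

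I do not expect a serious obstacle here; the only point requiring care is the handling of the form domains, namely checking that the dilation genuinely maps the form core $C_0^\infty(\Omega)$ onto $C_0^\infty(t\Omega)$ and preserves density, so that the variational principle may legitimately be applied on both sides. Everything else reduces to the one-line change of variables once the algebraic scaling identity above is in hand.
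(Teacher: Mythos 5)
Your proof is correct: the pointwise identity $(-i\nabla_y + A_{B/t^2}(y))v(y) = t^{-1}[(-i\nabla+A_B)u](y/t)$, the resulting invariance of the quadratic form under dilation, the $t^2$ scaling of the $L^2$-norm, and the min--max principle together give exactly the claimed identity. The paper states this proposition without proof (``It is easy to verify''), and your argument is precisely the standard verification it alludes to.
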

Here, $t\Omega = \{tx \, : \, x \in \Omega \}$ is a homothetic dilation of $\Omega$. 
The scaling identity yields an important simplification of problem \eqref{eq:lambda_n_min_problem_intro}. Any minimizer of \eqref{eq:lambda_n_min_problem_intro} for a constraint $|\Omega| = c$ with $c>0$ at field strength $B$ can be gained by appropriately scaling a minimizer of \eqref{eq:lambda_n_min_problem_intro} for the constraint $|\Omega| = 1$ at field strength $cB$. We therefore  consider only the case of planar domains of normalized area, i.e.\ $|\Omega| = 1$ in the following. It is common to use the normalized magnetic flux $\phi$ through $\Omega$, defined by
\begin{align}
\phi = \frac{1}{2\pi} \int_\Omega B \dd x = \frac{|\Omega| B }{2\pi},
\end{align} 
as a scaling invariant quantity that determines the effect of the magnetic field.

We will not touch on existence of minimizers of \eqref{eq:lambda_n_min_problem_intro} in this article, but rather assume they exist as in the case of the non-magnetic Laplacian. Let us introduce the following notation for the minimum and minimizers of \eqref{eq:lambda_n_min_problem_intro}.

\begin{definition}
For $B \geq 0$, let
\begin{align} \label{eq:lambda_n_min_problem}
\lambda_n^*(B) :=\min_{\substack{\Omega \text{ open}, \\ |\Omega|=1}} \lambda_n(\Omega, B).
\end{align}
Furthermore, we denote (possibly non-unique) minimizers of this problem by $\Omega_n^*(B)$. 
\end{definition}

The map $B \mapsto \lambda_n^*(B)$ is continuous. This follows from the fact that the eigenvalues maps $B\mapsto \lambda_n(\Omega, B)$ are piecewise real-analytic for any domain $\Omega$, in particular for any minimizer $\Omega_n^*(B_0)$ with fixed $B_0$. We also have the following elementary estimate.

\begin{proposition} Let $n \in\mathbb{N}$, $B\geq 0$ and suppose $\Omega_n^*(B)$ is star-shaped. Then, for any  $0 < t \leq 1$,
\begin{align}
 \lambda_n^*(B) \geq t \lambda_n^*(t B ). 
\end{align}
Moreover, if the left-sided derivative 
\begin{align}
(\lambda_n^*)_{-}'(B) = \lim_{h \rightarrow 0+ } \frac{\lambda_n^*(B) -\lambda_n^*(B -  h )}{h}
\end{align}
exists, it satisfies
\begin{align}
(\lambda_n^*)_{-}'(B) \geq - \frac{\lambda_n^*(B)}{B} .
\end{align}
\end{proposition}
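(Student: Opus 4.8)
The plan is to reduce both assertions to the interplay between the scaling identity (Proposition \ref{prop:scaling_identity}), domain monotonicity (Proposition \ref{prop:simple_est_domain_mono}(4)) and the minimality of $\Omega_n^*(B)$, using star-shapedness only to obtain a nesting of dilated copies of the minimizer. First I would fix $\Omega := \Omega_n^*(B)$, so that $\lambda_n^*(B) = \lambda_n(\Omega,B)$ with $|\Omega|=1$. By the rigid-motion invariance in Proposition \ref{prop:simple_est_domain_mono}(1), I may assume $\Omega$ is star-shaped with respect to the origin, so that $\Omega \subseteq s\Omega$ for every $s \geq 1$ and $s\Omega \subseteq \Omega$ for every $0 < s \leq 1$. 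This is the only use of the star-shapedness hypothesis.

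The heart of the first inequality is a single chain that combines the three facts above. For a dilation factor $s$ chosen so that the rescaled field strength matches $tB$, I would rewrite $\lambda_n(s\Omega,\cdot)$ in terms of $\lambda_n(\Omega,\cdot)$ via the scaling identity $\lambda_n(s\Omega, B/s^2) = \lambda_n(\Omega,B)/s^2$, then compare $s\Omega$ with $\Omega$ at a common field through domain monotonicity, and finally invoke that $\Omega$, having unit area, is an admissible competitor in the definition of $\lambda_n^*$ at that field. If the dilation is arranged so that the inclusion supplied by star-shapedness makes the monotonicity step and the competitor step reinforce one another, the three estimates should telescope into the claimed comparison $t\lambda_n^*(tB) \leq \lambda_n^*(B)$.

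The derivative statement then follows from the first inequality with essentially no extra work. I would specialize to $t = 1 - h/B$ with $h \to 0^+$, so that $tB = B - h$; rearranging $\lambda_n^*(B) \geq t\lambda_n^*(tB)$ gives $\lambda_n^*(B) - \lambda_n^*(B-h) \geq -\tfrac{h}{B}\lambda_n^*(B-h)$. Dividing by $h$ and letting $h \to 0^+$, the left-hand side converges to $(\lambda_n^*)_{-}'(B)$ by hypothesis, while the right-hand side converges to $-\lambda_n^*(B)/B$ by continuity of $B \mapsto \lambda_n^*(B)$, which yields $(\lambda_n^*)_{-}'(B) \geq -\lambda_n^*(B)/B$.

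The step I expect to be the main obstacle is precisely the orientation of the monotonicity inequality. Scaling alone couples the area and the field strength, so one cannot convert a unit-area minimizer at field $B$ directly into a unit-area competitor at field $tB$; the inclusion $\Omega \subseteq s\Omega$ (or $s\Omega \subseteq \Omega$) furnished by star-shapedness is exactly what bridges this mismatch in area. The delicate point is that the naive pairing of shrinking or enlarging $\Omega$ with the two field strengths tends to produce only a \emph{one-sided} bound on $\lambda_n(\Omega, tB)$ that does not chain with the minimality inequality. One must therefore check with care which dilation makes domain monotonicity and the competitor inequality point the same way, and verify that the resulting chain closes in the direction of the stated inequality rather than its reverse.
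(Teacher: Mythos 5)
Your overall strategy is the same as the paper's: translate so that $\Omega_n^*(B)$ is star-shaped about the origin, exploit the inclusion between $\Omega_n^*(B)$ and a dilate of itself together with domain monotonicity, the scaling identity and minimality, and then deduce the derivative bound by taking $t=1-h/B$, dividing by $h$ and invoking continuity of $B\mapsto\lambda_n^*(B)$. The second half of your argument (passing from the first inequality to the derivative estimate) is complete and coincides with the paper's computation.

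The first half, however, is not yet a proof: you never write down the chain of inequalities, and you explicitly defer the one question --- which dilation makes monotonicity and minimality point the same way --- that constitutes the entire content of the assertion. For comparison, the paper's chain is
\begin{align*}
\lambda_n^*(B)=\lambda_n(\Omega_n^*(B),B)\geq \lambda_n\bigl(t^{-\frac{1}{2}}\Omega_n^*(B),B\bigr)= t\,\lambda_n\bigl(\Omega_n^*(B),tB\bigr)\geq t\,\lambda_n^*(tB),
\end{align*}
i.e.\ one enlarges the minimizer by the factor $t^{-1/2}\geq 1$, applies monotonicity, and rescales back to unit area. Your worry about the orientation is well placed, because the middle equality is exactly where the normalized flux must be tracked: $t^{-1/2}\Omega$ at field $B$ carries flux $B/(2\pi t)$, whereas $\Omega$ at field $tB$ carries flux $tB/(2\pi)$, and these agree only at $t=1$. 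Applying Proposition \ref{prop:scaling_identity} as stated gives $\lambda_n\bigl(t^{-1/2}\Omega,B\bigr)=t\,\lambda_n\bigl(\Omega,B/t\bigr)$, so the three ingredients you list assemble to $\lambda_n^*(B)\geq t\,\lambda_n^*(B/t)$ rather than to the claimed $\lambda_n^*(B)\geq t\,\lambda_n^*(tB)$; the former controls the right-sided difference quotient from above and does not yield the stated lower bound on $(\lambda_n^*)_-'(B)$. So the step you left open is not routine bookkeeping: until you exhibit a chain that demonstrably closes on $t\,\lambda_n^*(tB)$, the first assertion --- and with it the derivative bound --- remains unproven.
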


\begin{proof}
We have for any $0 < t \leq 1$ that $\Omega_n^*(B)  \subset t^{-\frac{1}{2} } \Omega_n^*(B)$ after a suitable translation and thus by domain monotonicity, see Proposition \ref{prop:simple_est_domain_mono}, and scaling, see Proposition \ref{prop:scaling_identity},
\begin{align}
\lambda_n^*(B) = \lambda_n(\Omega_n^*(B), B) \geq \lambda_n(t^{-\frac{1}{2}}\Omega_n^*(B), B) = t \lambda_n\left(\Omega_n^*(B), tB\right) \geq t \lambda_n^*\left(t B \right). 
\end{align}
Let now $0 < h< B$ and set $t = 1- h/B$, then 
\begin{align}
\lambda_n^*(B) \geq \left(1-\frac{h}{B}\right) \lambda_n^*\left(B -  h \right)
\end{align}
and thus
\begin{align}
\frac{\lambda_n^*(B) -\lambda_n^*(B -  h )}{h} \geq -\frac{\lambda_n^*(B - h )}{B} .
\end{align}
Assuming the existence of a limit of the left hand side as $h$ is taken to zero and using continuity of $\lambda_n^*(B)$ for the right hand side yields the second assertion.
\end{proof}

An important property of the minimizers that we want to discuss is connectedness. Assuming existence of minimizers of \eqref{eq:lambda_n_min_problem_intro} for any $n\in\mathbb{N}$ and any $B\geq 0$, we can show the following generalization of a theorem by Wolf and Keller \cite{Wolf1994} that originally concerns the non-magnetic case.

\begin{theorem} \label{thm:wolf-keller}
Suppose that \eqref{eq:lambda_n_min_problem} is minimized by the union of two disjoint domains with positive measure. Then
\begin{align}
\lambda_n^*(B) = \min_{\substack{1\leq i \leq \lfloor n/2 \rfloor, \\ t\in(0,1)}} \max \left\lbrace \frac{\lambda_i^*(t B)}{t} , \frac{ \lambda_{n-i}^*((1-t) B)}{1-t} \right\rbrace. \label{eq:wolf-keller}
\end{align} 
If $(i^*,t^*)$ denotes a solution pair of above min-max-problem, then the corresponding minimizers are given by
\begin{align}
\Omega_n^*(B) = (t^*)^\frac{1}{2} \Omega_{i^*}^*(t^*B) \, \cup \,(1-t^*)^\frac{1}{2} \Omega_{n-i^*}^*((1-t^*)B).
\end{align}
where $\Omega_{i^*}^*(t^*B)$ and $\Omega_{n-i^*}^*((1-t^*)B)$ are any minimizers of \eqref{eq:lambda_n_min_problem} to the indices $i^*$ and $n-i^*$ with scaled field strength.
\end{theorem}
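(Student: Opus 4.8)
The plan is to run the classical Wolf--Keller argument, with the extra bookkeeping forced by the fact that rescaling a domain also rescales the magnetic field. The one structural input I would isolate first is a \emph{scaling lemma}: if $G$ is a domain of measure $s\in(0,1)$, write $G = s^{1/2}\tilde G$ with $|\tilde G|=1$; then Proposition~\ref{prop:scaling_identity} gives
\begin{align}
\lambda_k(G,B) = \frac{1}{s}\,\lambda_k(\tilde G, sB) \geq \frac{\lambda_k^*(sB)}{s},
\end{align}
with equality precisely when $\tilde G$ minimizes $\lambda_k(\cdot,sB)$ under $|\tilde G|=1$. The second input is that for disjoint open sets the magnetic Laplacian splits as an orthogonal direct sum $H_B^{\Omega_1\cup\Omega_2}=H_B^{\Omega_1}\oplus H_B^{\Omega_2}$, so the spectrum of the union is the sorted merge of the two spectra. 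Consequently, if the first $n$ eigenvalues of the union consist of the $i$ smallest of $\Omega_1$ and the $n-i$ smallest of $\Omega_2$, then $\lambda_n(\Omega_1\cup\Omega_2,B)=\max\{\lambda_i(\Omega_1,B),\lambda_{n-i}(\Omega_2,B)\}$.

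For the inequality ``$\leq$'' I would construct competitors. For each $1\le i\le\lfloor n/2\rfloor$ and $t\in(0,1)$, place disjointly a scaled copy $t^{1/2}\Omega_i^*(tB)$ (measure $t$) and $(1-t)^{1/2}\Omega_{n-i}^*((1-t)B)$ (measure $1-t$), obtaining an admissible set of measure $1$. By the scaling lemma its $i$-th and $(n-i)$-th block eigenvalues are exactly $\lambda_i^*(tB)/t$ and $\lambda_{n-i}^*((1-t)B)/(1-t)$, so the union has $n$ eigenvalues bounded by their maximum and hence $\lambda_n$ of the union is at most that maximum; minimizing over $(i,t)$ yields $\lambda_n^*(B)\le\min_{i,t}\max\{\cdots\}$. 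The restriction to $i\le\lfloor n/2\rfloor$ costs nothing: the substitution $(i,t)\mapsto(n-i,1-t)$ leaves the objective invariant.

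For ``$\geq$'' I would use the hypothesis. Let the minimizer be $D_1\cup D_2$, set $t=|D_1|\in(0,1)$, and let $i$ be the number of the first $n$ eigenvalues contributed by $D_1$, so that $\lambda_n^*(B)=\max\{\lambda_i(D_1,B),\lambda_{n-i}(D_2,B)\}$. Applying the scaling lemma to each piece gives $\lambda_i(D_1,B)\ge\lambda_i^*(tB)/t$ and $\lambda_{n-i}(D_2,B)\ge\lambda_{n-i}^*((1-t)B)/(1-t)$, whence $\lambda_n^*(B)\ge\max\{\cdots\}\ge\min_{i,t}\max\{\cdots\}$, closing the equality. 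The delicate point — and the step I expect to be the main obstacle — is showing the split is non-degenerate, i.e.\ $1\le i\le n-1$, so that both starred quantities are defined. If, say, $i=0$, then $D_1$ contributes none of the first $n$ eigenvalues and $\lambda_n^*(B)=\lambda_n(D_2,B)$ with $|D_2|<1$; shrinking $D_1$ and enlarging $D_2$ into the vacated region keeps $D_1$ irrelevant (its eigenvalues only grow) while \emph{strictly} lowering $\lambda_n(D_2,\cdot)$, contradicting minimality. This needs strict domain monotonicity for the magnetic Dirichlet Laplacian, which is stronger than Proposition~\ref{prop:simple_est_domain_mono}(4); I would justify it via unique continuation — a nontrivial eigenfunction cannot vanish on an open subset — the genuinely magnetic-specific ingredient of the argument.

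Finally, for the shape of the minimizers, I note that $t\mapsto\max\{\cdots\}$ is continuous on $(0,1)$ and blows up at both endpoints, since $\lambda_i^*(tB)\to\lambda_i^*(0)>0$ as $t\to0^+$ (using continuity of $B\mapsto\lambda_i^*(B)$); hence the minimum is attained at some $(i^*,t^*)$. Evaluating the competitor construction of the second paragraph at $(i^*,t^*)$ produces an admissible set whose $n$-th eigenvalue equals the min-max value $\lambda_n^*(B)$, so it is a genuine minimizer, yielding exactly the stated decomposition $\Omega_n^*(B)=(t^*)^{1/2}\Omega_{i^*}^*(t^*B)\cup(1-t^*)^{1/2}\Omega_{n-i^*}^*((1-t^*)B)$.
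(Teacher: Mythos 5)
Your proposal is correct and follows essentially the same route as the paper's proof: the magnetic scaling identity applied to each component, the splitting of the spectrum of a disjoint union, exclusion of the degenerate split via domain monotonicity, and the symmetry reduction to $i\le\lfloor n/2\rfloor$. The only differences are organizational — you separate the two inequalities, obtaining ``$\leq$'' unconditionally from a competitor construction where the paper derives equality constraints directly from minimality — and you are somewhat more explicit than the paper about the strictness needed to rule out the degenerate cases $i\in\{0,n\}$.
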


\begin{proof}
First, by the scaling property of the magnetic Laplacian with constant magnetic field, it holds
\begin{align}
\lambda_n(\Omega,B) = \frac{1}{|\Omega|} \lambda_n\left(\frac{\Omega}{|\Omega|^\frac{1}{2}}, |\Omega| B\right) .
\end{align}
Let $\Omega_n^*(B) = \Omega_1 \cup \Omega_2$ with $\Omega_1$, $\Omega_2$ disjoint, $|\Omega_1|>0$, $|\Omega_2|>0$ and $|\Omega_1|+|\Omega_2|=1$. Let $t:=|\Omega_1| \in (0,1)$, so that $|\Omega_2| = 1-t$. Without loss of generality we may assume that the eigenfunction to the $n$-th eigenvalue on $\Omega_n^*(B)$ is not vanishing on the component $\Omega_1$. Then there must exist a minimal index $i \leq n$ such that $$\lambda_n^*(B) = \lambda_n(\Omega_n^*(B),B) = \lambda_i(\Omega_1,B).$$
If we assume $i=n$, then $\lambda_1(\Omega_2,B) \geq \lambda_n^*(B)$ and domain monotonicity implies $\lambda_n(\Omega_1,B) = \lambda_n^*(B)$ can be lowered by slightly expanding $\Omega_1$ while shrinking $\Omega_2$. This clearly contradicts the assumption of $\Omega_n^*(B)$ being a minimizer. Therefore $i \leq n-1$.

Next, we notice that $t^{-\frac{1}{2}} \Omega_1  $ has measure equal to one, hence 
\begin{align}
\lambda_n^*(B)= \lambda_i(\Omega_1,B) = \frac{1}{t} \lambda_i\left(\frac{\Omega_1}{ t^\frac{1}{2}}, t B\right) \geq \frac{\lambda_i^*(t B)}{t}
\end{align}
and in order to not contradict that $\Omega_n^*(B)$ is a minimizer, we must have equality and $\Omega_1 = t^\frac{1}{2} \Omega_i^*(tB)$.

On the other hand, the other component of $\Omega_n^*(B)$ must have at least $n-i$ eigenvalues below or equal to $\lambda_n^*(B)$. As in case of $\Omega_1$, we notice 
\begin{align}
\lambda_{n-i}(\Omega_2,B) \geq \frac{\lambda_{n-i}^*((1-t) B)}{1-t} 
\end{align}
and we must have equality to avoid another contradiction with minimality of $\Omega_n^*(B)$ and $\Omega_2 = (1-t)^\frac{1}{2} \Omega_{n-i}^*((1-t)B)$. We have found that the minimal eigenvalue must satisfy
\begin{align} 
&\lambda_n^*(B) =   \max \left\lbrace \frac{\lambda_i^*(t B)}{t} , \frac{ \lambda_{n-i}^*((1-t) B)}{1-t} \right\rbrace.
\end{align}
for some $1\leq i \leq n-1$ and $t \in (0,1)$. But as $\lambda_n^*(B)$ is minimal, we conclude
\begin{align} \label{eq:disjoint_disk_n-1_pair}
&\lambda_n^*(B) =  \min_{\substack{1\leq i \leq n-1, \\ t\in(0,1)}} \max \left\lbrace \frac{\lambda_i^*(t B)}{t} , \frac{ \lambda_{n-i}^*((1-t) B)}{1-t} \right\rbrace.
\end{align}
Finally, we remark that one can replace the condition $1\leq i \leq n-1$ in \eqref{eq:disjoint_disk_n-1_pair} by $1\leq i \leq \lfloor n/2 \rfloor$ due to symmetry. 
\end{proof}

For $n=2$, this theorem leads to the following assertion.

\begin{corollary} \label{cor:lambda2_connectedness}
For $B > 4\pi$, any minimizer $\Omega_2^*(B)$ is connected. If $\Omega_2^*(B)$ is a disconnected minimizer for $B \leq 4\pi$, then it must be a disjoint union of two equally sized disks.
\end{corollary}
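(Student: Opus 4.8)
The plan is to specialize the Wolf--Keller formula of Theorem \ref{thm:wolf-keller} to $n=2$ and then read off the geometry of a putative disconnected minimizer. Since $\lfloor 2/2\rfloor = 1$, the only admissible index is $i=1$, so \emph{if} $\Omega_2^*(B)$ is a union of two disjoint pieces of positive measure, then
\begin{align}
\lambda_2^*(B) = \min_{t\in(0,1)} \max\left\{ \frac{\lambda_1^*(tB)}{t}, \frac{\lambda_1^*((1-t)B)}{1-t}\right\},
\end{align}
and each component is a rescaled copy of a minimizer $\Omega_1^*$. By Erd\H{o}s' theorem (Theorem \ref{thm:erdos_lambda1}) every such minimizer is a disk, so a disconnected minimizer must be a disjoint union of two disks. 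This reduces both assertions to a one-parameter question about the optimal area split $t$.

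Next I would analyze the function $f(t) := \lambda_1^*(tB)/t$. By the scaling identity (Proposition \ref{prop:scaling_identity}), $f(t)$ equals $\lambda_1$ of the disk of area $t$ in the field $B$; hence by domain monotonicity (Proposition \ref{prop:simple_est_domain_mono}(4)) the map $t\mapsto f(t)$ is strictly decreasing. Consequently, for $t<\tfrac12$ one has $f(t) > f(1-t)$ so the inner maximum equals $f(t)$, while for $t>\tfrac12$ it equals $f(1-t)$; in either case the maximum strictly decreases as $t$ approaches $\tfrac12$. Therefore the outer minimum is attained \emph{only} at $t=\tfrac12$, forcing the two disks to be equal, each of area $\tfrac12$, with common value $2\lambda_1^*(B/2)$. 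This proves the second assertion, and in fact shows that a disconnected minimizer is two equal disks for every $B\ge 0$.

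It remains to exclude a disconnected minimizer when $B>4\pi$. Here I would use the single disk $D$ of area $1$ as a competitor: since $\lambda_2^*(B)\le \lambda_2(D,B)$, it suffices to prove the strict inequality
\begin{align}
\lambda_2(D,B) < 2\lambda_1^*(B/2) \qquad \text{for } B>4\pi,
\end{align}
for then the two-equal-disk value cannot be minimal and, by the previous paragraph, no disconnected minimizer can exist, so every minimizer is connected. A consistency check supports the threshold: two equal disks satisfy $\lambda_1+\lambda_2 = 4\lambda_1^*(B/2)\ge 2B$ by Proposition \ref{prop:simple_est_domain_mono}(2), which for $B>4\pi$ strictly exceeds the Erd\H{o}s--Loss--Vougalter bound $8\pi$ of Theorem \ref{thm:erdoes_li_yau}, indicating that the equal-disk configuration over-satisfies the sum bound precisely when each half-disk carries flux larger than $1$, i.e.\ when $B>4\pi$.

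The hard part is establishing the displayed inequality with the sharp constant $4\pi$. Both sides are governed by disk eigenvalues, which for the magnetic Dirichlet Laplacian reduce to a radial ODE whose spectrum is determined by zeros of confluent hypergeometric (Kummer) functions indexed by the angular momentum; comparing $\lambda_2(D,B)$ with $2\lambda_1(D,B/2)$ therefore amounts to a quantitative analysis of these transcendental spectral conditions, which is delicate in the large-$B$ regime where both sides approach the lowest Landau level $B$. Pinning the crossover down to exactly $B=4\pi$ (equivalently flux $2$, matching the eigenvalue index) is the main obstacle, and is where the explicit structure of the disk --- rather than the soft variational tools used above --- must enter.
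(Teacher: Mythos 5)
Your argument follows the paper's proof almost step for step: specialize Theorem \ref{thm:wolf-keller} to $n=2$ (so that only $i=1$ is admissible), invoke Theorem \ref{thm:erdos_lambda1} to identify each component as a disk, show the min--max over $t$ is attained only at $t=\tfrac12$, and then rule out the two-equal-disk configuration for $B>4\pi$ by comparison with a single unit-area disk. The one place you genuinely diverge is the monotonicity step: the paper writes $\lambda_1^*(tB)/t = B\cdot\lambda_1(D,tB)/(tB)$ and cites the strict decrease of $B\mapsto\lambda_1(D,B)/B$ from \cite{Baur2025, Son2014}, whereas you rescale to see that $\lambda_1^*(tB)/t$ is the first eigenvalue of a disk of \emph{area} $t$ in the fixed field $B$ and appeal to domain monotonicity. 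Your route is more elementary and equally valid, with the small caveat that Proposition \ref{prop:simple_est_domain_mono}(4) as stated gives only non-strict monotonicity, and strictness is what pins the optimum to $t=\tfrac12$ alone; you need either the unique-continuation refinement of domain monotonicity or the cited result to close that. Finally, the inequality $\lambda_2(D,B)<2\lambda_1(D,B/2)$ for $B>4\pi$, which you correctly isolate as the crux and honestly leave open, is simply \emph{asserted} in the paper's proof with no further justification (elsewhere the paper concedes that such disk-spectrum facts rest on analysis of \eqref{eq:magnetic_disk_eigval_transcendental} that it does not carry out), so you have not missed a hidden soft argument --- but by the same token the corollary is not fully proved by either account without that analysis.
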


\begin{proof}
Let us assume $\Omega_2^*(B)$ is a union of two disjoint domains with positive measure. Then, according to the previous theorem, we find that $\Omega_2^*(B)$ must consist of scaled minimizers of $\lambda_1^*(t B)$ and $\lambda_1^*((1-t) B)$ and
\begin{align}
\lambda_2^*(B)=\min_{t\in(0,1)} \max \left\lbrace \frac{\lambda_1^*(t B)}{t} , \frac{ \lambda_{1}^*((1-t) B)}{1-t} \right\rbrace .
\end{align}
By Theorem \ref{thm:erdos_lambda1}, we have 
\begin{align}
\frac{\lambda_1^*(t B)}{t} = \frac{\lambda_1(D,t B)}{t} = B \cdot \frac{\lambda_1(D,t B)}{t B}
\end{align}
where $D$ is a disk with $|D| = 1$. Since $B \mapsto \lambda_1(D, B) /B $ is strictly decreasing with $B$, see e.g.\ \cite{Baur2025, Son2014}, it follows that 
\begin{align}
 \max \left\lbrace \frac{\lambda_1^*(t B)}{t} , \frac{ \lambda_{1}^*((1-t) B)}{1-t} \right\rbrace
\end{align}
is minimal at $t^*=1/2$. We get $ \lambda_2^*(B) = 2 \lambda_1^*(B/2)$ and $\Omega_2^*(B)$ is a disjoint union of two disks of equal size. For $B> 4\pi$ however, we have
\begin{align}
\lambda_2(D,B)<  2\lambda_1(D,B/2)=2 \lambda_1^*(B/2)=\lambda_2^*(B),
\end{align}
which contradicts that $\Omega_2^*(B)$ is a minimizer. Therefore, the minimizer $\Omega_2^*(B)$ must be connected for $B> 4\pi$.
\end{proof}

\section{Numerical Methods} \label{sec:num_meth}

In this section we describe the numerical methods we employ for general geometries. As already mentioned, we employ the Method of Fundamental Solutions to compute eigenvalues of the magnetic Dirichlet Laplacian and the minimization algorithm to solve the shape optimization problem is gradient-based.

\subsection{The Method of Fundamental Solutions} \label{subsec:MFS}

There is an extensive literature on the Method of Fundamental Solutions, see for example \cite{Karageorghis2001, Alves2005, Betcke2005, Antunes2011} and references therein.

The general idea of the MFS is the following. Let $\Omega\subset \mathbb{R}^d$ be a bounded, open domain and consider the eigenvalue problem
\begin{align}\label{eq:L_pde_equation}
L u = \lambda u \qquad  \text{ in } \Omega
\end{align}
for a self-adjoint, (classical) linear differential operator $L$, supplied with Dirichlet boundary condition
\begin{align}\label{eq:L_bnd_equation}
u = 0 \qquad  \text{ on } \partial \Omega.
\end{align}
The operator $L=L(x)$ may depend on $x\in \Omega$. We construct solutions of the eigenvalue equation \eqref{eq:L_pde_equation} by linear combinations of translated Green's functions on the whole space. Hence, let $G_\lambda$ denote the Green's function of the operator $L-\lambda$ on $\mathbb{R}^d$, i.e.\ $G_\lambda$ satisfies
\begin{align} \label{eq:greens_function_equation}
(L-\lambda)G_\lambda (x,y) = \delta(x-y) \qquad \text{ in } \mathbb{R}^d
\end{align}
in the distributional sense. If $L$ is independent of $x\in\Omega$, such as the standard Laplacian, Green's functions on the whole space are called fundamental solutions. In this case, the fundamental solution $G_\lambda(x,y)$ only depends on $\Vert x-y \Vert$.

Let $\{ y_j\}_{j=1}^m$ be a set of points outside of $\overline{\Omega}$. We call the points $y_j$ source points. Observe that any function of the form
\begin{align} \label{eq:eigenfunction_ansatz}
\hat{u}_\alpha(x)=\sum\limits_{j=1}^m \alpha_j G_\lambda(x,y_j),
\end{align}
where $\alpha = (\alpha_1, ..., \alpha_m )^T$ is a collection of arbitrary complex-valued coefficients, is then a solution of the eigenvalue equation \eqref{eq:L_pde_equation}. While any such function does satisfy \eqref{eq:L_pde_equation}, it does not necessarily satisfy the Dirichlet boundary condition \eqref{eq:L_bnd_equation}. We try to approximate $\hat{u} = 0$ on $\partial \Omega$ for any $\lambda$ by imposing the boundary condition on a suitable set of collocation points, i.e.\ by demanding $\hat{u}(x_i) = 0$ for a set of points $\lbrace x_i \rbrace$ chosen from the boundary $\partial\Omega$. This leads to the linear system of equations
\begin{align}
A(\lambda) \cdot \alpha = 0 \label{eq:Alambda_null_eq}
\end{align}
where $A(\lambda)$ is the matrix with entries $(A(\lambda))_{ij} = G_\lambda(x_i,y_j)$. One can now search for $\lambda$ where \eqref{eq:Alambda_null_eq} has a non-trivial solution $\alpha$. If the number of collocation points exceeds the number of source points, the matrix $A(\lambda)$ is rectangular and equation \eqref{eq:Alambda_null_eq} usually does not have a non-trivial solution, but one can instead look for normalized solutions $\alpha$ that minimize the left hand side. Looking for $\lambda$ where the left hand side can be made close the zero, singles out certain values $\hat{\lambda}_n$ and to each of these a correspoding coefficient vector $\hat{\alpha}_n$. For each of the pairs $(\hat{\lambda}_n, \hat{\alpha}_n)$ the eigenvalue equation \eqref{eq:L_pde_equation} is satisfied exactly and the boundary condition is almost fulfilled. The values $\hat{\lambda}_n$ are considered approximations for the eigenvalues of the operator $L$ and the functions $\hat{u}_n := \hat{u}_{\hat{\alpha}_n}$ their associated approximate eigenfunctions.

In the literature, the model operator usually discussed in view of the MFS is the standard Laplacian $L = - \Delta$. It is translationally invariant and positive. In $\mathbb{R}^2$, the fundamental solution $G_\lambda$ of $-\Delta - \lambda$ is given by
\begin{align}
G_\lambda(x,y) = \frac{i}{4}H_0^{(1)}(\sqrt{\lambda} \Vert x-y \Vert) , \qquad x,y \in\mathbb{R}^2,
\end{align}
where $H_0^{(1)}$ is a Hankel function of the first kind. 

For the magnetic Laplacian $L = (-i\nabla + A)^2$ in $\mathbb{R}^2$ with constant magnetic field, we now replace this by the corresponding Green's function $G_\lambda$ that fulfills
\begin{align} \label{eq:eigenfrequency_equation_magnetic}
((-i\nabla + A(x))^2 - \lambda) G_\lambda(x,y)&= \delta(x-y) \qquad \text{ in } \mathbb{R}^2,
\end{align}
where $A(x) = B/2 (-x_2,x_1)^T $.
The Green's function is given by
\begin{align} \label{eq:greens_function_magnetic}
G_\lambda(x,y) = \frac{\Gamma( \frac{1}{2}-\frac{\lambda}{2B} )}{4 \pi } \frac{ \exp\left({i \frac{B}{2}(x_2 y_1 - x_1 y_2)}\right)}{\left(\frac{B}{2} \Vert x-y  \Vert^2 \right)^{1/2} } \,  W_{\frac{\lambda}{2B},0}\left( \frac{B}{2} \Vert x-y  \Vert^2 \right),  \qquad x,y \in\mathbb{R}^2,
\end{align}
see Dodonov et al.~\cite{dodonov1975}. Here, $\Gamma(z)$ is the gamma function and $W_{\kappa, \mu}(z)$ denotes the Whittaker function. For references on the Whittaker function, we refer to \cite{Abramowitz1972,Buchholz1953, NIST_DLMF}. Note that $G_\lambda(x,y)$ is not defined if $\frac{1}{2}-\frac{\lambda}{2B} $ is a negative integer, i.e.\ if $\lambda = (2k+1) B$ for some $k \in \mathbb{N}$. This is a manifestation of the so-called Landau levels of the magnetic Laplacian with constant magnetic field on the full-space $\mathbb{R}^2$. It is well-known that the magnetic Laplacian on $\mathbb{R}^2$ exhibits infinitely degenerate eigenvalues at the Landau levels, see e.g.\ \cite{Avron1978, Iwatsuka1990, Landau1991}, so that a Green's function for these $\lambda$ cannot be defined. \\

\textbf{Subspace Angle Technique} \\

There are multiple techniques to determine the values $\hat{\lambda}_n$ for which \eqref{eq:Alambda_null_eq} can be approximately solved. If the number of source and collocation points is equal and hence $A(\lambda)$ is a square matrix, a common approach is to look for local minima of the map $\lambda \mapsto \log(|\det A(\lambda)|)$. However, we observed that this method tends to produce spurious pseudo-eigenvalues and -eigenfunctions for the magnetic Laplacian. We will comment on this again later. A more robust method is the so-called Subspace Angle Technique (SAT) that has been proposed by Betcke and Trefethen \cite{Betcke2005} and has been applied for example in \cite{Osting2010, Antunes2017b}. This method slightly modifies the standard MFS scheme that was outlined above.

The SAT works as follows. First, choose $m_B \geq m$ collocation points on the boundary and then additionally $m_I$ random points inside the domain. Let $m' = m_B + m_I$ be the number of boundary and interior points. Then define the $m' \times m$-matrix
\begin{align}
A(\lambda) = \begin{pmatrix}
A_B(\lambda)  \\ 
A_I(\lambda) 
\end{pmatrix} 
\end{align}
with $A_B(\lambda)$ indicating the subblock of Green's function evaluations with collocation points on the boundary and $A_I(\lambda)$ indicating the subblock of Green's function evaluations with interior points. By reduced QR factorization one can decompose $A(\lambda)$ into $A(\lambda) = Q(\lambda) R(\lambda)$ where $Q(\lambda)$ is a complex-valued $m' \times m$ matrix and $R(\lambda)$ is a complex-valued upper triangular $m \times m$ matrix. Every vector $b \in \text{ran } A(\lambda) \subset \mathbb{C}^{m'}$ can then be written as 
\begin{align}
b = Q(\lambda) c = \begin{pmatrix}
Q_B(\lambda)  \\ 
Q_I(\lambda) 
\end{pmatrix} c
\end{align}
for some  $c \in\mathbb{R}^m$ and with $ \Vert c \Vert = \Vert b \Vert$. Here, $Q_B(\lambda)$ is the upper $m_B \times m$ subblock and $Q_I(\lambda)$ is the lower $m_I \times m$ subblock of $Q(\lambda)$. The idea is to minimize the part of $b$ that corresponds to the boundary under the condition that $c$ is non-trivial. Therefore, consider the minimization problem
\begin{align}
\underset{\substack{c \in\mathbb{R}^m,\\ \Vert c \Vert = 1}}{\min} \, \Vert Q_B(\lambda) c\Vert. \label{eq:Q_B_min}
\end{align}
This minimization problem is solved by the right singular vector $\hat{c} \in\mathbb{C}^{m}$ to the smallest singular value of $ Q_B(\lambda)$ and problem \eqref{eq:Q_B_min} yields precisely the smallest singular value of $ Q_B(\lambda)$. If we let
\begin{align}
\sigma(\lambda) := \underset{k = 1,..., m}{\min} \sigma_k (Q_B(\lambda))= \Vert Q_B(\lambda) \hat{c}\Vert = \underset{\substack{c \in\mathbb{R}^m,\\ \Vert c \Vert = 1}}{\min} \, \Vert Q_B(\lambda) c\Vert,
\end{align}
then $\sigma(\lambda)$ is a measure of how good the boundary condition can be fulfilled in all collocation points. The eigenvalue approximations $\hat{\lambda}_n$ are chosen as the values of $\lambda$ where $\sigma(\lambda)$ takes on a local minimum close to zero. For a given eigenvalue approximation $\hat{\lambda}_n$ the coefficient vector for the corresponding approximate eigenfunction $\hat{u}_n = \hat{u}_{\hat{\alpha}_n}$ is then gained from 
\begin{align}
\hat{\alpha}_n = R(\lambda)^{-1} \hat{c}_n,
\end{align}
where $\hat{c}_n$ denotes the minimizer of \eqref{eq:Q_B_min} at $\lambda = \hat{\lambda}_n$.

Note that if we set $\hat{b}_n = Q(\lambda) \hat{c}_n$, then we have
\begin{align}
\Vert A_I(\hat{\lambda}_n)  \hat{\alpha}_n\Vert ^2 = \Vert Q_I(\hat{\lambda}_n) \hat{c}_n\Vert ^2 = \Vert \hat{b}_n\Vert^2 - \Vert Q_B(\hat{\lambda}_n) \hat{c}_n\Vert^2  = 1 - \sigma(\hat{\lambda}_n)^2. 
\end{align}
This means that if the boundary condition is well satisfied in the collocation points and $\sigma(\lambda)$ is small, then $\Vert A_I(\lambda) \hat{\alpha}_n\Vert$ is close to one. We conclude that the approximative eigenfunction function will be non-zero in at least one interior point. This means that the SAT forces solutions $\hat{u}_n$ to be non-zero somewhere in the interior of the domain. We found this property to be particularly important for the magnetic Laplacian. Note that for bounded, open $\Omega$ there exist eigenfunctions not just in $\Omega$ but also in $\mathbb{R}^2 \setminus \overline{\Omega}$, the unbounded exterior space. During tests with the MFS scheme we observed that the simple approach of minimization of $\lambda \mapsto \log (|\det A(\lambda)|)$ appears to fail to distinguish between interior and exterior eigenfunctions. It yields a union of the desired solutions of the magnetic Dirichlet Laplacian on the interior $\Omega$ and undesired pseudo-solutions on the exterior $\mathbb{R}^2 \setminus \overline{\Omega}$. Generally, the second type of solutions are functions $\hat{u}_{\hat{\alpha}_n}$ that are very close to zero in the interior $\Omega$ and non-vanishing on the exterior $\mathbb{R}^2 \setminus \overline{\Omega}$. These pseudo-solutions satisfy the eigenvalue equation in $\mathbb{R}^2 \setminus \overline{\Omega}$ except in the finite number of source points near the boundary $\partial \Omega$ and we consider them to be artifacts introduced by the $\log (|\det A(\lambda)|)$-method. The SAT eliminates these artifical exterior solutions and is hence preferred by us in the case of the magnetic Laplacian.\\

%\textbf{Error bounds} \\
%
%\textcolor{red}{what??!}\\

\textbf{Further comments on parameter choices and implementation} \\

The placement of the collocation points $\lbrace x_i \rbrace$ and the source points $\lbrace y_j \rbrace$ is crucial for the success of the MFS. Following previous studies \cite{Alves2005,Antunes2012,Antunes2017b}, we assume that $\Omega$ is bounded by a closed, simple curve and choose to place the boundary collocation points $\lbrace x_i \rbrace$ equidistant along the boundary curve. The source points $\lbrace y_j \rbrace$ are then set to $y_i = x_i + \delta n_i $, where $n_i$ is the outer normal vector at $x_i$ and $\delta>0$ is some small fixed parameter. The interior points are sampled uniformly from the interior of the domain. Typically we used around $300$ collocation points on the boundary, the same amount of source points and $50$ to $100$ interior points. For domains with normalized area, we found that a good choice for the parameter $\delta$ is around $0.015$ to $0.03$.

Our code is written in Python. The Whittaker function $W_{\kappa, \mu}$ is rarely implemented in numerical libraries of special functions. However, the  Whittaker function is related to Tricomi's confluent hypergeometric function $U$ via
\begin{align}
W_{\kappa, \mu}(z) = \exp\left(-\frac{z}{2}\right) z^{\mu + \frac{1}{2}} U(\mu -\kappa + \frac{1}{2},1+ 2\mu,z),
\end{align}
see for example \cite{bateman1953} for reference, hence, yielding for the Green's function
\begin{align} \label{eq:greens_function_magnetic_kummer}
G_\lambda(x,y) = \frac{\Gamma( \frac{1}{2}-\frac{\lambda}{2B})}{4 \pi } \exp\left({i \frac{B}{2}(x_2 y_1 - x_1 y_2) - \frac{B}{4} \Vert x-y  \Vert^2 }\right) \, U\left(\frac{1}{2}-\frac{\lambda}{2B},1,\frac{B}{2} \Vert x-y  \Vert^2 \right).
\end{align}
We noticed that the implementation of Tricomi's $U$ function in SciPy (version 1.7.3), called \texttt{hyperu}, yields erroneous outputs for certain ranges of inputs. An excellent alternative is the PyGSL package (\url{https://github.com/pygsl/pygsl}) which is a Python wrapper for GSL - GNU Scientific Library (\url{https://www.gnu.org/software/gsl/}).

Finally, $G_\lambda(x,y)$ may be multiplied by any scalar $c(\lambda) \neq 0$ independent of $x$ and $y$  as a scalar multiplication does not change the solvability of the system $A(\lambda) \cdot \alpha = 0$, it can however have a positive effect on numerical stability. We have decided to simply disregard the factor $\Gamma( \frac{1}{2}-\frac{\lambda}{2B})/(4 \pi )$ entirely and instead work with the scaled Green's function
\begin{align}
\tilde{G}_\lambda(x,y) = \exp\left({i \frac{B}{2}(x_2 y_1 - x_1 y_2) - \frac{B}{4} \Vert x-y  \Vert^2 }\right) U\left(\frac{1}{2}-\frac{\lambda}{2B},1,\frac{B}{2} \Vert x-y  \Vert^2 \right).
\end{align}

\newpage

\subsection{Optimization routine}

We describe the basic principles of the minimization algorithm.\\

 \textbf{Domain parametrization} \\

For the numerical optimization scheme, we restrict ourselves to the following class of domains. We assume $\Omega$ is simply connected and has a boundary that is described by a closed Jordan curve $\gamma(\varphi)$ of the form
\begin{align}\label{eq:star_shaped_set2d}
\gamma(\varphi):=  \begin{pmatrix} \gamma_x( \varphi ) \\  \gamma_y( \varphi ) \end{pmatrix} ,  \qquad \varphi \in [0,2\pi).
\end{align}
We further assume $\gamma_x$ and $\gamma_y$ to be given by truncated Fourier series, i.e. 
\begin{align}
\gamma_x(\varphi) &= a_{x,0} + \sum\limits_{j=1}^{N-1} a_{x,j} \cos(j\varphi) + \sum\limits_{j=1}^{N-1}  b_{x,j} \sin(j \varphi), \\
\gamma_y(\varphi) &= a_{y,0} + \sum\limits_{j=1}^{N-1} a_{y,j} \cos(j\varphi) + \sum\limits_{j=1}^{N-1}  b_{y,j} \sin(j \varphi), 
\end{align}
where $N$ is some fixed integer and $\{ a_{x,j}\}$, $\{ b_{x,j}\}$, $\{ a_{y,j}\}$ and $\{ b_{y,j}\}$ are suitable real-valued coefficients. With $c_x$, $c_y$ and $f(\varphi)$ defined by
\begin{align}
c_x &= (a_{x,0},..., a_{x,N-1}, b_{x,0},..., b_{x,N-1})^T, \\
c_y &= (a_{y,0},..., a_{y,N-1}, b_{y,0},..., b_{y,N-1})^T, \\
f(\varphi) &= (1, \cos(\varphi), ..., \cos((N-1) \varphi), 0 , \sin(\varphi), ..., \sin((N-1) \varphi))^T,
\end{align}
we can abbreviate this to
\begin{align}
\gamma_x(\varphi) &= c_x^T \cdot f(\varphi), \\
\gamma_y(\varphi) &= c_y^T \cdot f(\varphi),
\end{align}
with the standard euclidian dot product in $\mathbb{R}^{2N}$. Note that $\gamma_x$ and $\gamma_y$ are linear in $c_x$ resp.~$c_y$.

For any domain $\Omega$ that is described by a boundary curve $\gamma$ going around the domain anti-clockwise, the area is given by the integral
\begin{align} \label{eq:measure_2d_integral}
|\Omega| = \frac{1}{2}\int_{\partial \Omega } x \dd y - y \dd x = \frac{1}{2}\int_{0}^{2 \pi} \gamma_x(t) \gamma_y'(t) - \gamma_y(t) \gamma_x'(t) \dd t.
\end{align}
The outer normal $n(\varphi)$ to $\Omega$ at any given boundary point $\gamma(\varphi)$ can be computed with 
\begin{align}
n(\varphi) = \frac{T(\gamma'(\varphi))}{\Vert T(\gamma'(\varphi))\Vert} , \qquad \text{where} \qquad 
T\left(\begin{pmatrix}  x  \\   y \end{pmatrix} \right)=\begin{pmatrix} y\\ -x \end{pmatrix}.
\end{align}
One can also decide whether a point lies inside or outside $\Omega$ by computing an integral. By the Jordan curve theorem, a closed Jordan curve divides $\mathbb{R}^2$ into an interior region and an exterior region. A point $(x_0 , y_0)$ that does not lie on the boundary of $\Omega$ is either inside the domain or outside the domain. If the winding number 
\begin{align}
\mathrm{wind}(x_0,y_0; \gamma) &= \frac{1}{2\pi} \int_{\partial \Omega } \frac{(x-x_0) \dd y - (y - y_0) \dd x}{(x-x_0)^2 + (y - y_0)^2}  \\
&= \frac{1}{2\pi} \int_{0}^{2 \pi} \frac{(\gamma_x(t) - x_0) \gamma_y'(t) - (\gamma_y(t) - y_0) \gamma_x'(t)}{(\gamma_x(t)-x_0)^2 + (\gamma_y(t) - y_0)^2}  \dd t
\end{align}
of $\gamma$ around $(x_0 , y_0)$ is non-zero, then it lies inside the domain, otherwise it lies outside the domain.\\

\textbf{Gradient descent scheme} \\

By the scaling property of the eigenvalues, the shape optimization problem \eqref{eq:lambda_n_min_problem} is equivalent to the shape optimization problem
\begin{align} \label{eq:lambda_n_min_problem_scaled_no_constraint}
\min_{\Omega \text{ open}} |\Omega| \lambda_n\left(\Omega, \frac{B}{|\Omega|} \right) 
\end{align}
in the sense that any minimizer of \eqref{eq:lambda_n_min_problem} is a minimizer of \eqref{eq:lambda_n_min_problem_scaled_no_constraint} and any minimizer of \eqref{eq:lambda_n_min_problem_scaled_no_constraint} scaled to unit area is a minimizer of \eqref{eq:lambda_n_min_problem}. Both problems give the same result up to scaling of the minimizer. We focus on problem \eqref{eq:lambda_n_min_problem_scaled_no_constraint} in the following, since it has the advantage of being an unconstrained minimization problem.

The domain parametrization introduced previously now reduces the shape optimization problem to an optimization problem over coefficient vectors $c\in\mathbb{R}^{2N}$. Formally, we introduced a map $c \mapsto \Omega_c$ that yields a bounded, open domain $\Omega_c$ for every admissible coefficient vector $c$. Here, we call a coefficient vector $c$ is admissible if the associated boundary curve is a Jordan curve. On this restricted class of domains, the problem we want to solve becomes
\begin{align}
\min_{\substack{c\in\mathbb{R}^{2N}\\ \text{ admissible}} } J(c), \qquad \text{where } J(c):=|\Omega_c|\lambda_n\left(\Omega_c, \frac{B}{|\Omega_c|} \right).
\end{align}
We solve the now finite dimensional problem with a modified gradient descent algorithm. The algorithm we employ proceeds as follows: First we define an initial domain by choosing an initial vector of coefficients $c_0$ and compute the objective eigenvalue. Then we compute the gradient of $J$ with respect to $c$ at our current domain and optimize our objective $J$ as usual along the half-line spanned by the gradient. In the standard gradient descent scheme, the minimizer found along the half-line is the starting point of the next iteration. But since our spectral objective function is independent of $|\Omega|$, we choose to normalize the coefficient vector $c$ in every iteration such that $|\Omega_c|=1$. Because of \eqref{eq:measure_2d_integral}, this is simply achieved by replacing $c$ by $c/|\Omega_c|^\frac{1}{2}$.  This prevents shrinking or blow-up of domains over the course of several iterations. The gradient descent loop continues until an exit condition is fulfilled, e.g.\ $|J(c_i)-J(c_{i-1})| < \varepsilon$ where $\varepsilon$ is small or a fixed number of iterations $i_{\mathrm{max}}$ is exceeded. The full algorithm is summarized in pseudo-code in Algorithm 1.

{\centering
\begin{minipage}{.7\linewidth}

\begin{algorithm}[H]
\caption{Gradient descent algorithm}
\begin{algorithmic}
\State set $i=0$
\While{$i\leq i_{\mathrm{max}}$ or $|J(c_i)-J(c_{i-1})| \geq \varepsilon$}
	\State compute $J(c_i)$
	\State compute $d = \nabla_c J(c_i)$
	\State set $c(\beta) = \dfrac{c_i - \beta d}{|\Omega_{c_i - \beta  d}|^\frac{1}{2}}$ 
	\State compute $\beta^*= \mathrm{argmin}_{\beta\in [0, \beta_{\mathrm{max}}]} \, J(c(\beta))$
	\State set $c_{i+1} = c(\beta^*)$
	\State increase $i \rightarrow i+1$
\EndWhile
\State \Return $J(c_i), c_i$
\end{algorithmic}
\end{algorithm}
\end{minipage}
\par\vspace{0.5cm}
}

To apply this algorithm, it is necessary to be able to determine the gradient $\nabla_c J(c)$ at given $c$. By chain rule, the gradient is composed of the three derivatives
\begin{align} \label{eq:gradient_chain_rule_components}
\nabla_c (|\Omega_c|), \qquad \nabla_c \lambda_n\left( \Omega_c, B\right), \qquad \frac{\partial}{\partial B}  \lambda_n\left( \Omega_c, B\right).
\end{align}

If $\lambda_n(\Omega_c, B)$ is simple, each of these can be evaluated purely in terms of the eigenvalue $\lambda_n(\Omega_c, B)$ and its associated eigenfunction $u_n$. Expressions for the first two derivatives of \eqref{eq:gradient_chain_rule_components} are derived within the framework of shape derivatives. \\

\textbf{Shape derivatives} \\

Let $\Phi: (-T, T) \rightarrow W^{1, \infty}(\mathbb{R}^d,\mathbb{R}^d)$
such that $\Phi(t)$ is differentiable with $\Phi(0) = I$ and $\Phi'(0) = V$ where $V$ is a vector field. Here, $W^{1, \infty}(\mathbb{R}^d,\mathbb{R}^d)$ denotes the set of all bounded Lipschitz maps from $\mathbb{R}^d$ to $\mathbb{R}^d$. The vector field $V$ is also called a deformation field. For simplicity, we assume that $\Phi$ has the form $\Phi(t) = I + tV$. Applied to a domain $\Omega$, the applications $\Phi(t)$ generate the family of domains
\begin{align}
\Omega_t = \Phi(t)(\Omega) = \lbrace \Phi(t)(x)\; :\; x\in \Omega \rbrace, \qquad t\in (-T,T).
\end{align}
The parameter $t$ controls the strength of the deformation of $\Omega$. 

Under a given application $\Phi$, the rate of change of the domain's measure is fully characterized by a boundary integral.
\begin{theorem}[\cite{Henrot2006}] \label{thm:change_volume}
Let $\Omega$ be a bounded, open set. If $\Omega$ is Lipschitz, then $t \mapsto |\Omega_t|$ is differentiable at $t=0$ with
\begin{align}
(|\Omega_t|)'(0) = \int_{\partial \Omega}  V \cdot n \, d\sigma .
\end{align}
\end{theorem}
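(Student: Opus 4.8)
The plan is to reduce the moving-domain integral to an integral over the fixed reference domain $\Omega$ by a change of variables, differentiate under the integral sign, and finally convert the resulting volume integral into a boundary integral by the divergence theorem.

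First I would write $|\Omega_t| = \int_{\Omega_t} 1 \dd x$ and pull back to $\Omega$ using the map $\Phi(t) = I + tV$. Since $V \in W^{1,\infty}(\mathbb{R}^d,\mathbb{R}^d)$ has a finite Lipschitz constant $L$, the map $\Phi(t)$ is a bi-Lipschitz homeomorphism onto its image for $|t| < 1/L$, so the change of variables formula gives
\begin{align}
|\Omega_t| = \int_{\Omega} \left| \det D\Phi(t)(x) \right| \dd x. \nonumber
\end{align}
By Rademacher's theorem the Jacobian matrix $DV$ exists almost everywhere, and because $\Phi(t) = I + tV$ we have $D\Phi(t) = I + t\, DV$. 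Expanding the determinant as a polynomial in $t$ yields, near $t=0$,
\begin{align}
\det\left( I + t\, DV \right) = 1 + t\, \mathrm{tr}(DV) + O(t^2) = 1 + t\, \mathrm{div}\, V + O(t^2), \nonumber
\end{align}
with the remainder controlled uniformly in $x$ since $\Vert DV \Vert_\infty < \infty$. In particular the Jacobian is strictly positive for $|t|$ small, so the absolute value may be dropped.

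Next I would justify passing the derivative inside the integral. The difference quotient $t^{-1}\left( \det D\Phi(t)(x) - 1 \right)$ converges pointwise almost everywhere to $\mathrm{div}\, V(x)$ and, by the uniform bound above, is dominated by a constant on the bounded set $\Omega$, which is integrable there. Dominated convergence then gives
\begin{align}
(|\Omega_t|)'(0) = \int_{\Omega} \mathrm{div}\, V \dd x. \nonumber
\end{align}
Finally I would invoke the Gauss--Green (divergence) theorem to rewrite $\int_{\Omega} \mathrm{div}\, V \dd x = \int_{\partial \Omega} V \cdot n \, d\sigma$, which completes the argument.

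The two places where the hypotheses are genuinely needed — and where I expect the main subtlety to lie — are the interchange of limit and integral, which relies on the uniform bound on $DV$ coming from $V \in W^{1,\infty}$, and the concluding application of the divergence theorem, for which the Lipschitz regularity of $\partial\Omega$ is essential: for a merely open set the outer normal $n$ and the surface measure $d\sigma$ need not be well defined, so the boundary integral would not even make sense. Everything else is routine once these two points are secured.
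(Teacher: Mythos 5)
Your argument is correct and complete: the pullback via the bi-Lipschitz map $\Phi(t)=I+tV$, the uniform expansion $\det(I+t\,DV)=1+t\,\mathrm{div}\,V+O(t^2)$, dominated convergence, and the Gauss--Green theorem on a bounded Lipschitz domain are exactly the standard route. The paper itself gives no proof of this statement — it is quoted from the cited reference \cite{Henrot2006} — and your argument coincides with the standard one found there, with the two genuinely delicate points (interchange of limit and integral, and the validity of the divergence theorem under Lipschitz regularity of $\partial\Omega$) correctly identified and handled.
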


Let us assume now that $\Omega \subset \mathbb{R}^2$ is a bounded, open set with $\partial \Omega$ of class $C^2$ and $A: \mathbb{R}^2 \rightarrow \mathbb{R}^2$ some real-valued, divergence-free vector potential of class $C^1$. This includes the case of linear gauge $A(x)=B/2(-x_2,x_1)$. We denote by $\lambda_n(t) := \lambda_n(\Omega_t)$ the $n$-th eigenvalue of the magnetic Laplacian $(-i\nabla + A)^2$ on $\Omega_t$ with Dirichlet boundary condition. The change of eigenvalues with respect domain deformations can also be computed by boundary integrals. This leads to so-called Hadamard formulas which are well-known for the standard Laplacian. Hadamard's formula generalizes to the case of magnetic fields. In fact the formula looks exactly the same as in the case of the Dirichlet Laplacian.

\begin{theorem}[Hadamard formula for magnetic Dirichlet Laplacian]\label{thm:change_eigval}
If $\lambda_n(\Omega)$ is a simple eigenvalue with corresponding normalized eigenfunction $u_n \in L^2(\Omega)$, then $\lambda_n(t)$ is differentiable at $t=0$ with
$$\lambda_n'(0) = -\int_{\partial \Omega} \left| \frac{\partial u_n}{\partial n}\right|^2 V \cdot n \, d\sigma .$$
Here, $\partial / \partial n$ denotes the normal derivative with respect to the outer normal of the domain. 
\end{theorem}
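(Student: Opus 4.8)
The plan is to compute the derivative through the Eulerian (shape) derivative $u'$ of the eigenfunction and then apply a magnetic analogue of Green's second identity, in the same spirit as the classical proof for the Dirichlet Laplacian. First I would record that $\lambda_n'(0)$ and the shape derivative $u'=\frac{\partial}{\partial t}\big|_{t=0}u_n(t)$ actually exist: pulling the problem on $\Omega_t$ back to the fixed domain $\Omega$ via $\Phi(t)$ turns it into an eigenvalue problem on $H_0^1(\Omega)$ for a family of quadratic forms whose coefficients depend analytically on $t$ (the transported magnetic gradient $-iD\Phi(t)^{-T}\nabla + A\circ\Phi(t)$ together with the Jacobian weight $\det D\Phi(t)$). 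Since $\lambda_n(\Omega)$ is simple, the analytic perturbation theory for type (B) holomorphic families already invoked in Proposition~\ref{prop:simple_est_domain_mono} yields a real-analytic branch $t\mapsto(\lambda_n(t),\tilde u(t))$, and elliptic regularity on the $C^2$ domain with $C^1$ potential gives enough smoothness (uniform $H^2$ bounds in $t$) to define the traces appearing below.

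Next I would derive the boundary value problem satisfied by $u'$. Because the operator $(-i\nabla+A)^2$ is independent of $t$, differentiating the interior equation $(-i\nabla+A)^2 u_n(t)=\lambda_n(t)u_n(t)$ at $t=0$ gives
\begin{align}
(-i\nabla+A)^2 u' - \lambda_n u' = \lambda_n'(0)\, u_n \qquad \text{in } \Omega. \nonumber
\end{align}
Differentiating the Dirichlet condition, i.e.\ the identity $u_n(t)(\Phi(t)(x))=0$ for $x\in\partial\Omega$, and using that the tangential gradient of $u_n$ vanishes on $\partial\Omega$ (since $u_n\equiv 0$ there, so $\nabla u_n=(\partial u_n/\partial n)\,n$), yields the boundary condition
\begin{align}
u' = -\left(V\cdot n\right)\frac{\partial u_n}{\partial n} \qquad \text{on } \partial\Omega. \nonumber
\end{align}

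Then I would pair $u'$ with $u_n$ in the magnetic second Green identity. Since $H_B^\Omega$ is formally self-adjoint, the Dirichlet-form (bulk) contribution is Hermitian-symmetric and cancels, so testing the equation for $u'$ against $u_n$ and subtracting the conjugate equation leaves, on the one hand, $\lambda_n'(0)\int_\Omega|u_n|^2\,dx=\lambda_n'(0)$ after using the eigenvalue equation and the normalization $\|u_n\|_{L^2}=1$, and on the other hand a pure boundary term. Here lies the point special to the magnetic case: every boundary contribution carrying the factor $A\cdot n$ is multiplied by $u_n$ or $\bar u_n$, both of which vanish on $\partial\Omega$, so the vector potential drops out entirely and the boundary term collapses to $\int_{\partial\Omega}u'\,\frac{\partial\bar u_n}{\partial n}\,d\sigma$, exactly as for $B=0$. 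Substituting the boundary condition for $u'$ gives $\lambda_n'(0)=-\int_{\partial\Omega}\big|\partial u_n/\partial n\big|^2(V\cdot n)\,d\sigma$; the sign is consistent with domain monotonicity (Proposition~\ref{prop:simple_est_domain_mono}), since an outward deformation $V\cdot n>0$ must decrease the eigenvalue.

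The main obstacle I anticipate is not the algebra of Green's identity but the rigorous justification of the shape derivative and its boundary trace. I would handle this through the pullback picture: the transported eigenfunction $\tilde u(t)=u_n(t)\circ\Phi(t)$ solves an analytic family of elliptic problems on the fixed domain with homogeneous $H_0^1$ data, so analytic perturbation theory provides a differentiable branch, uniform $H^2$ bounds (from the $C^2$ regularity of $\partial\Omega$ and $C^1$ regularity of $A$) legitimize all trace manipulations, and the Eulerian derivative is recovered as $u'=\dot{\tilde u}-V\cdot\nabla u_n$, carrying precisely the boundary value stated above. Verifying that these regularity hypotheses survive the pullback, so that $\partial u_n/\partial n$ and the identity for $u'$ make classical sense on $\partial\Omega$, is the delicate step.
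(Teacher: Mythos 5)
The paper states this theorem without proof (it is presented as a known generalization of the classical Hadamard formula, with only the remark that ``the formula looks exactly the same as in the case of the Dirichlet Laplacian''), so there is no in-paper argument to compare against. Your proposal supplies the standard proof and it is correct: the Eulerian derivative $u'$ satisfies $(-i\nabla+A)^2u'-\lambda_n u'=\lambda_n'(0)u_n$ in $\Omega$ with $u'=-(V\cdot n)\,\partial u_n/\partial n$ on $\partial\Omega$, and pairing with $u_n$ via the magnetic Green identity leaves only $\int_{\partial\Omega}u'\,\partial\bar u_n/\partial n\,d\sigma$, because every boundary term carrying $A\cdot n$ is multiplied by $u_n$ or $\bar u_n$, which vanish on $\partial\Omega$ --- this is exactly the observation that makes the magnetic formula coincide with the non-magnetic one. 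Your identification of the delicate step is also the right one: the algebra is routine, while the existence and regularity of the branch $t\mapsto(\lambda_n(t),u_n(t))$ and of the trace of $u'$ is where the real work lies; your pullback-plus-analytic-perturbation sketch (using simplicity of $\lambda_n$, the $C^2$ boundary and the $C^1$ potential for uniform $H^2$ control) is the standard way to close that gap, and for the specific linear gauge used in the paper the pulled-back coefficients are even polynomial in $t$, so the type (B) analyticity you invoke is immediate. Two minor points worth making explicit if you write this up: the normalization $\Vert u_n\Vert_{L^2}=1$ is what turns the source term into $\lambda_n'(0)$ after pairing (the terms $\lambda_n\int_\Omega u'\bar u_n$ cancel on both sides, so no condition on $\operatorname{Re}\langle u',u_n\rangle$ is needed), and the resulting right-hand side $-\int_{\partial\Omega}|\partial u_n/\partial n|^2\,V\cdot n\,d\sigma$ is manifestly real, as it must be.
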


We have not yet specified how the derivatives with respect to the coefficients of the domain parametrization relate to the shape derivatives in the language of deformation fields. The partial derivative of the $n$-th rescaled eigenvalue with respect to the $i$-th entry of the coefficient vector $c$ is the same as the directional derivative of the $n$-th rescaled eigenvalue in direction $e_i$ in coefficient space. We must now find a deformation field $V_i$ that corresponds to incremental change in direction $e_i$ in coefficient space. \\
%For the parametrization generating star-shaped sets, we observe that changing a coefficient vector from $c$ to $c + te_i$ results in a new set with boundary defined by the radius function $r_t(\varphi) = (c + te_i)^T \cdot f(\varphi)$ instead of $r(\varphi)=c^T \cdot f(\varphi) $. This means that the boundary of $\Omega$ transforms from 
%\begin{align}
%\partial \Omega := \{ c^T \cdot f(\varphi) (\cos(\varphi),\sin(\varphi))^T \in \mathbb{R}^2 \, : \,   \varphi \in [0,2\pi)  \}
%\end{align}
%to 
%\begin{align}
%\partial \Omega_t := \{(c + te_i)^T\cdot f(\varphi) (\cos(\varphi),\sin(\varphi))^T \in \mathbb{R}^2 \, : \,   \varphi \in [0,2\pi)  \}.
%\end{align}
%The shape derivative yields the desired derivative in direction $e_i$ in coefficient space, if we now choose a deformation field $V_i$ such that $(I+tV_i)(\partial \Omega) = \partial \Omega_t$. We only need its action on the boundary, so may choose one with
%\begin{align}
%V_i(\gamma(\varphi)) = \frac{e_i^T\cdot f(\varphi) }{r(\varphi)}\gamma(\varphi). 
%\end{align}
%In this case, we can imagine $I+tV_i$ as simply moving points $(r \cos \varphi, r \sin \varphi) \in \Omega$ given in polar coordinates along the radial direction by the distance $ te_i^T\cdot f(\varphi)(r/r(\varphi))$. \\
For our class of parametrized domains, we observe that changing the first coefficient vector from $c_x$ to $c_x + te_i$ results in a new set with boundary defined by $\gamma_{x,t}(\varphi) = (c_x + te_i)^T \cdot f(\varphi)$ and $\gamma_{y}(\varphi) = c_y^T \cdot f(\varphi)$. This means that the boundary of $\Omega$ transforms from 
\begin{align}
\partial \Omega := \{ (c_x^T \cdot f(\varphi) ,c_y^T \cdot f(\varphi))^T \in \mathbb{R}^2 \, : \,   \varphi \in [0,2\pi)  \}
\end{align}
to 
\begin{align}
\partial \Omega_t := \{ ((c_x+ te_i)^T \cdot f(\varphi) ,c_y^T \cdot f(\varphi))^T \in \mathbb{R}^2 \, : \,   \varphi \in [0,2\pi)  \}.
\end{align}
The shape derivative yields the desired derivative in direction $e_i$ in coefficient space, if we now choose a deformation field $V_{x,i}$ such that $(I+tV_{x,i})(\partial \Omega) = \partial \Omega_t$. A suitable deformation field is
\begin{align}
V_{x,i}(\gamma(\varphi)) =  e_i^T\cdot f(\varphi) \begin{pmatrix} 1 \\ 0 \end{pmatrix} .
\end{align} 
Similarly, for the derivative with respect to the $i$-th coefficient of the second coefficient vector $c_y$, we can choose
\begin{align}
V_{y,i}(\gamma(\varphi)) =  e_i^T\cdot f(\varphi) \begin{pmatrix} 0 \\ 1 \end{pmatrix} .
\end{align} 
With $V_i$ given, the $i$-th partial derivative of our spectral objective function $J(c)$ is then for domains with $|\Omega_c| = 1$ given by
\begin{align} 
\begin{split}
\frac{\partial}{\partial c_i} J(c)&=\frac{\partial}{\partial c_i}\left(|\Omega_c| \lambda_n\left(\Omega_c, \frac{B}{|\Omega_c|} \right) \right)\\
&=\int_{\partial \Omega_c} \left[  \left(  \lambda_n(\Omega_c, B )- B \cdot \frac{\partial}{\partial B} \lambda_n(\Omega_c, B )\right)-  \left|\frac{\partial u_n}{\partial n} \right|^2 \right] V_i \cdot n \, d\sigma .
\end{split}
\end{align}

Finally, we give an expression for the last needed derivative, the derivative of $\lambda_n(\Omega_c,B)$ with respect to the field strength $B$. Under enough regularity of the boundary of the domain, the desired derivative can be obtained by differentiating the magnetic Laplacian. We obtain a Hellmann-Feynman-type formula.

\begin{theorem}[Hellmann-Feynman] \label{thm:Hellmann-Feynman}
Let $\Omega \subset \mathbb{R}^2$ be a bounded, open set with $\partial \Omega$ of class $C^2$, $\hat{A}: \mathbb{R}^2 \rightarrow \mathbb{R}^2$ a sufficiently regular vector field. Denote the eigenvalues of $H(B)=(-i\nabla +B\hat{A})^2$ on $\Omega \subset \mathbb{R}^2$ with Dirichlet boundary condition by $\lambda_n(\Omega,B)$. If $\lambda_n(\Omega,B)$ is a simple eigenvalue with associated $L^2$-normalized eigenfunction $u_n$, then there exists an open neighborhood around $B$ where $\lambda_n(\Omega,B)$ is (infinitely) differentiable with respect to $B$ and where
\begin{align}
\frac{\partial}{\partial B} \lambda_n (\Omega, B) = \left\langle u_n , \frac{\partial H(B)}{\partial B} u_n \right\rangle_{L^2(\Omega)}.
\end{align}

\end{theorem}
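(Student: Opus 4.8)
The plan is to obtain the formula as an instance of the classical Hellmann–Feynman / first-order perturbation argument, the only real work being to justify that the eigenvalue and a corresponding eigenfunction may be differentiated with respect to $B$. First I would invoke the fact, already used in Proposition \ref{prop:simple_est_domain_mono}(3), that $\{H(B)\}$ is a self-adjoint holomorphic family of type (B): the associated quadratic forms $a_B[u] = \Vert(-i\nabla + B\hat{A})u\Vert_{L^2(\Omega)}^2$ share the fixed form domain $H_0^1(\Omega)$ and depend polynomially, hence holomorphically, on $B$. By Kato's analytic perturbation theory \cite[Ch.~VII]{Kato1995}, near a value $B_0$ at which $\lambda_n$ is simple there is a neighborhood on which $\lambda_n(\Omega, B)$ stays simple and both the eigenvalue and an associated eigenfunction $u_n = u_n(B)$ can be chosen to depend real-analytically on $B$; in particular $\lambda_n$ is infinitely differentiable, which gives the regularity claim. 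Because $\partial\Omega$ is $C^2$ and $\hat{A}$ is regular, the operator domain is the $B$-independent space $H^2(\Omega)\cap H_0^1(\Omega)$, so $u_n(B)$ and its $B$-derivative both lie in $D(H(B))$.

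Next I would differentiate the eigenvalue relation $H(B)u_n = \lambda_n u_n$ in $B$. Writing $H(B) = (-i\nabla + B\hat{A})^2$ and expanding the square gives the first-order operator
\begin{align}
\frac{\partial H(B)}{\partial B} = \hat{A}\cdot(-i\nabla + B\hat{A}) + (-i\nabla + B\hat{A})\cdot\hat{A},
\end{align}
which is symmetric on $D(H(B))$. Differentiation of the eigenvalue relation then yields
\begin{align}
\frac{\partial H}{\partial B}\,u_n + H\,\frac{\partial u_n}{\partial B} = \frac{\partial \lambda_n}{\partial B}\,u_n + \lambda_n\,\frac{\partial u_n}{\partial B}.
\end{align}
Taking the $L^2(\Omega)$ inner product with $u_n$ and using that $H$ is self-adjoint with $Hu_n = \lambda_n u_n$ and $\lambda_n\in\mathbb{R}$, the two contributions involving $\partial u_n/\partial B$ coincide,
\begin{align}
\left\langle u_n, H\,\frac{\partial u_n}{\partial B}\right\rangle_{L^2(\Omega)} = \left\langle H u_n, \frac{\partial u_n}{\partial B}\right\rangle_{L^2(\Omega)} = \lambda_n\left\langle u_n, \frac{\partial u_n}{\partial B}\right\rangle_{L^2(\Omega)},
\end{align}
and cancel. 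With the normalization $\Vert u_n\Vert_{L^2(\Omega)} = 1$ this leaves exactly $\langle u_n, (\partial H/\partial B)\,u_n\rangle_{L^2(\Omega)} = \partial\lambda_n/\partial B$, the asserted identity. Note that the cancellation uses only self-adjointness and $Hu_n=\lambda_n u_n$ at the point in question, so no control over the $B$-dependence of the phase or norm of $u_n$ is required.

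The main obstacle is entirely in the first paragraph: producing a genuinely differentiable choice of eigenpair rather than a merely continuous one, and ensuring that the manipulations of the second paragraph take place in a fixed domain. The simplicity hypothesis is what makes this clean, since it guarantees a one-dimensional spectral projection depending analytically on $B$, from which an analytic normalized eigenfunction is extracted, and it rules out the eigenvalue crossings at which $B\mapsto\lambda_n$ is only piecewise analytic. If one prefers to avoid operator-domain bookkeeping, the same cancellation can be carried out at the level of the quadratic form by differentiating $\lambda_n(B) = a_B[u_n(B)]$ and invoking the variational characterization so that the derivative of $u_n$ drops out; I nonetheless expect the operator-domain route to be the shortest to write, given the $C^2$ regularity already assumed.
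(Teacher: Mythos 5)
Your argument is correct, but note that the paper does not actually prove this theorem: it states ``We omit a fully rigorous proof here. It can be obtained for example by the implicit function theorem.'' So you have supplied a complete argument where the paper offers only a pointer. Your route goes through Kato's analytic perturbation theory for self-adjoint holomorphic families of type (B) --- the same machinery the paper already invokes for Proposition \ref{prop:simple_est_domain_mono}(3) --- to get a locally analytic simple eigenvalue branch and an analytic normalized eigenfunction, after which the standard cancellation $\langle u_n, H\,\partial_B u_n\rangle = \lambda_n\langle u_n,\partial_B u_n\rangle$ does the rest; the paper's suggested alternative would instead apply the implicit function theorem to the map $(\lambda,u,B)\mapsto (H(B)u-\lambda u,\ \Vert u\Vert^2-1)$, which yields $C^\infty$ (rather than analytic) dependence but avoids citing the holomorphic-family formalism. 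The two are essentially equivalent in difficulty here; yours has the advantage of reusing a fact already established in the paper and of correctly flagging the only genuine issues (a differentiable choice of eigenpair, and domain bookkeeping, for which your fallback to the form-level identity $\lambda_n(B)=a_B[u_n(B)]$ is the cleanest resolution if one worries about $\partial_B u_n\in D(H)$). Your expression for $\partial H/\partial B$ and the observation that the cancellation needs no control on the phase of $u_n(B)$ are both accurate.
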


We omit a fully rigorous proof here. It can be obtained for example by the implicit function theorem. Note that if $\hat{A}$ is divergence-free, then the Hellmann-Feynman formula gives
\begin{align} \label{eq:lambda_n_partialB}
\frac{\partial}{\partial B} \lambda_n (\Omega, B)& = \frac{1}{B} \left( \lambda_n(\Omega, B) +  \int_{ \Omega} B^2 \hat{A}^2 |u_n|^2  - |\nabla u_n|^2 \dd x \right), & \text{if } B \neq 0,\\
\frac{\partial}{\partial B} \lambda_n (\Omega, B)& = 0, & \text{if } B = 0,
\end{align} 
which follows from integration by parts. In the case of the constant magnetic field, we take the vector potential $\hat{A}(x)=1/2(-x_2,x_1)^T$ which has vanishing divergence.

With the preceding formulas, the gradient $\nabla_c J(c)$ can be fully evaluated using the approximative eigenpairs $(\hat{\lambda}_n, \hat{u}_n)$ from the MFS. We remark that the normal derivative 
\begin{align}
\frac{\partial \hat{u}_n}{\partial n} = (\nabla \hat{u}_n)^T \cdot n.
\end{align}
of the MFS approximative eigenfunction can be evaluated exactly on the boundary since
\begin{align}
\nabla_x \hat{u}_{\alpha_n^*}(x) =  \sum\limits_{j=1}^m (\alpha_n^*)_j \nabla_x G_\lambda(x,y_j)
\end{align}
where the gradient $\nabla_x G_\lambda(x,y_j)$ for the Green's function of the Magnetic Laplacian in 2D is 
\begin{align}
\nabla_x G_\lambda(x,y) &= \frac{\Gamma( \frac{1}{2}-\frac{\lambda}{2B})}{4 \pi } \exp\left({i \frac{B}{2}(x_2 y_1 - x_1 y_2) - \frac{B}{4} \Vert x-y  \Vert^2 }\right) \\
& \hspace{1cm} \cdot \Bigg[ \left( i \frac{B}{2} y^\perp - \frac{B}{2} (x-y) \right) U\left(\frac{1}{2}-\frac{\lambda}{2B},1,\frac{B}{2} \Vert x-y  \Vert^2 \right) \\
& \hspace{2.5cm} - \left( \frac{1}{2}-\frac{\lambda}{2B} \right) B(x-y) \; U\left(\frac{3}{2}-\frac{\lambda}{2B},2,\frac{B}{2} \Vert x-y  \Vert^2 \right) \Bigg].
\end{align}
Here, we used the notation $y^\perp = (-y_2 ,y_1)^T$ as well as \eqref{eq:greens_function_magnetic_kummer} for the Green's function and
\begin{align}
\frac{d}{dz} U(a,b,z) = -a \cdot U(a+1,b+1,z)
\end{align}
for the derivative of the $U$ function \cite{bateman1953}. \\

\textbf{Eigenvalue clustering} \\

As noted in \cite{Antunes2017b}, it is desirable to adjust the descent direction when the target eigenvalue comes close to lower eigenvalues. The reason for such eigenvalue clustering is that any potential decrease of the target eigenvalue is limited by the distance to the next lower eigenvalue and although following the negative gradient of the target eigenvalue will decrease it for a sufficiently small step width, it does not necessarily decrease the next lower eigenvalue at the same time. In case of eigenvalue clustering during the optimization procedure, choosing the gradient $d_n=\nabla_c J(c_i)$ as the descent direction usually leads to stagnation before it can reach a more preferable region in coefficient space. 

To mitigate this problem, it is beneficial to choose a different search direction, one that decreases the whole cluster of eigenvalues simultaneously. With given parameter $\varepsilon_{\lambda}>0$, we say $\lambda_k$ belongs to the cluster of $\lambda_n$ if $|\lambda_k-\lambda_n|< \varepsilon_{\lambda}$ or $|\lambda_k-\lambda_{k+1}|< \varepsilon_{\lambda}$ and $\lambda_{k+1}$ is part of the cluster. Under this definition, $\lambda_n$ is always part of its own cluster. If we detect a cluster of $j\geq 2$ eigenvalues, let
\begin{align}
d_k = \nabla_c \left(|\Omega_c|^\frac{2}{d} \lambda_k\left(\Omega_c, \frac{B}{|\Omega_c|^\frac{2}{d}} \right) \right) \Bigg|_{c=c_i}
\end{align}
denote the gradient of the $k$-th rescaled eigenvalue. Then new search direction is determined by solving the optimization problem
\begin{align} \label{eq:adjust_d_search_direction}
\max_{\substack{d \in \mathbb{R}^{2 N}, \\ \Vert d\Vert = 1}} \min \lbrace (d_n)^T \cdot d, (d_{n-1})^T \cdot d , ... , (d_{n-j+1})^T \cdot d   \rbrace.
\end{align}
The minimizer of this problem is a search direction that has good alignment with all individual gradients of the rescaled eigenvalues in the cluster in the sense that the projections are simultaneously as large as possible. It is easily shown that any minimizer of \eqref{eq:adjust_d_search_direction} is found in $\operatorname{span}\{d_{n-j+1}, ..., d_n \}$, so \eqref{eq:adjust_d_search_direction} reduces to a $j$-dimensional problem that can be solved quickly.

\section{Results} \label{sec:results}

\subsection{MFS Benchmark}

To illustrate the potential accuracy of the MFS, we apply it in a first benchmark test to the disk. We computed the first six eigenvalues of an area-normalized disk at the magnetic field strengths $B=6.0, 30.0$ and $100.0$. 

Note that eigenvalues of the disk can be obtained from the solutions to a set of implicit equations, see \eqref{eq:magnetic_disk_eigval_transcendental}. These equations can be solved very accurately by a simple root search algorithm. Hence, this root search method will serve as our control experiment. 

For the MFS, we used $m_B = 300$ collocation points on the boundary of the disk and an equal number of source points. We selected $m_I = 100$ random interior points and a distance of $\delta = 0.08$ between collocation and source points.

\begin{table}
\begin{center}
\begin{tabular}{c|ccc}
$B=6.0$ & root search & MFS & FreeFEM \\ \hline
 $\lambda_1$ & 18.78985322628746 & 18.7898532262874\textbf{2} & 18.789\textbf{90462255829}                                            \\
$\lambda_2$ & 41.07719366592705 & 41.0771936659270\textbf{6} & 41.077\textbf{32594796938}  \\
$\lambda_3$ & 53.07719366592705 & 53.07719366592705 & 53.077\textbf{32594796850}  \\
$\lambda_4$ & 72.02862974019817 & 72.02862974019\textbf{772} & 72.028\textbf{86895347926}  \\
$\lambda_5$ & 96.02862974019801 & 96.02862974019\textbf{767} & 96.028\textbf{86895347715}  \\
$\lambda_6$ & 96.62148527797662 & 96.6214852779766\textbf{5} & 96.621\textbf{76298112768}  \\ \hline\hline

$B=30.0$ & root search & MFS & FreeFEM  \\ \hline

$\lambda_1$ & 31.97451772480147 & 31.974517724801\textbf{51} & 31.9745\textbf{3981161004}  \\
$\lambda_2$ & 38.47621009105776  &  38.4762100910577\textbf{9} & 38.4762\textbf{8828825477} \\
$\lambda_3$ & 51.01243934805061 & 51.012439348050\textbf{54} & 51.012\textbf{60625635323} \\
$\lambda_4$ & 70.26913184739855  & 70.2691318473985\textbf{8} & 70.269\textbf{41669425181} \\
$\lambda_5$ & 96.54045269537635 &  96.5404526953\textbf{4237} & 96.540\textbf{88225478540} \\
$\lambda_6$ & 98.47621009105780 & 98.476210091057\textbf{78} & 98.4762\textbf{8828825187} \\\hline\hline

$B=100.0$ & root search & MFS & FreeFEM \\ \hline

$\lambda_1$ & 100.00036335077738 & 100.000363350777\textbf{05} & 100.0003633\textbf{9324728} \\
$\lambda_2$ & 100.00535529860144  &  100.00535529860\textbf{097} & 100.005355\textbf{62088823}\\
$\lambda_3$ & 100.03914939250473  & 100.039149392504\textbf{38} & 100.0391\textbf{5110568723}\\
$\lambda_4$ & 100.18903860798198  & 100.1890386079819\textbf{3} & 100.1890\textbf{4550379986}\\
$\lambda_5$ & 100.67837722417839 & 100.67837722417\textbf{798}  & 100.6783\textbf{9873530745} \\
$\lambda_6$ & 101.93536559801957 & 101.935365598019\textbf{23} & 101.935\textbf{41960030071} \\

\end{tabular}
\end{center}
 \caption{Comparison of numerical methods for the solution of the eigenvalue problem on a disk $D$ with $|D|=1$. Computed are the eigenvalues $\lambda_1(D,B)$ to $\lambda_6(D,B)$ at three different field strengths. Digits that deviate from those found by root search are printed in bold.}\label{tab:eigval_methods_comparison} 
\end{table}

\begin{table}

\begin{center}

\begin{tabular}{c|cccc}
 & $\lambda_1$ & $\lambda_2$ & $\lambda_3$ & $\lambda_4$  \\ \hline
 \begin{tabular}{c} FreeFEM (788 vertices)\end{tabular} & 7.84152214 & 12.10122972 & 18.66639944 & 24.12429257 \\
\begin{tabular}{c} FreeFEM (3092 vertices)\end{tabular} & 7.84117784 & 12.09950210 & 18.65981318 & 24.12232805 \\
\begin{tabular}{c} FreeFEM  (12167 vertices)\end{tabular} & 7.84111243 & 12.09914378 & 18.65838429 & 24.12193757  \\
\begin{tabular}{c} FreeFEM (48080 vertices)\end{tabular}  & 7.84109824 & 12.09906161 & 18.65804734 & 24.12184950  \\
\begin{tabular}{c} FreeFEM  (191521 vertices)\end{tabular}  & 7.84109492 & 12.09904176 & 18.65796477 & 24.12182836  \\
\begin{tabular}{c} MFS  ($m_B = 300$, $\delta=0.1$) \end{tabular} & 7.84109385 & 12.09903527 & 18.65793752 & 24.12182146  \\
\end{tabular}

\end{center}
 \caption{Comparison of eigenvalue approximations for $\lambda_1(\Omega_{test},B)$ to $\lambda_4(\Omega_{test},B)$ at $B=6.0$. The FE eigenvalue approximations appear to converge to the MFS eigenvalue approximations when the mesh size is increased.}\label{tab:eigval_methods_comparison2} 
\end{table}

We also computed the eigenvalues by Finite Elements (FE). The FE computations were done in FreeFEM v4.9 (\url{https://freefem.org/}, Aug 26 2024). We used a piecewise $P_2$ continuous finite element space and a discretisation of the disk with $196363$ vertices. The system dimension was $783949$. 

All computations (root search, MFS as well as FE) were performed on an 8-core Intel i7-6700 CPU with 16 GB of RAM. We remark that the system size of the FE experiment almost saturates the memory limit of the machine. With the given settings and on the given machine, MFS and FE computations both took about one minute of wall time whereas root search took only a fraction of a second.

The resulting eigenvalue approximations of the three methods are found in Table \ref{tab:eigval_methods_comparison}. It can be seen that the eigenvalues obtained by MFS agree with those found by root search up to $12$ to $14$ digits, sometimes even down to $16$ digits, which is essentially machine precision. In comparison, significantly less accuracy is achieved by FE. Here, typically only five to seven digits agree with those found by root search. So, at comparable computation times, MFS is capable of giving a lot more accurate digits for the disk.

In a second benchmark test, we have computed the lowest four eigenvalues of the magnetic Dirichlet Laplacian on an arbitrarily chosen non-circular test domain $\Omega_{test}$. We chose this domain as the interior of the closed Jordan curve
\begin{align*}
\gamma(\varphi) = \begin{pmatrix}
\cos(\varphi) +0.05 \cdot \cos(2 \varphi) +0.1 \cdot \sin(4 \varphi)\\
\sin(\varphi ) - 0.05 \cdot  \cos(6 \varphi) - 0.05 \cdot \sin(4\varphi)
\end{pmatrix}, \qquad \varphi\in[0,2\pi),
\end{align*}
which yields the following shape:
\begin{center}
\begin{overpic}[trim={4cm 3cm 4cm 2.5cm},clip,scale=0.35]{./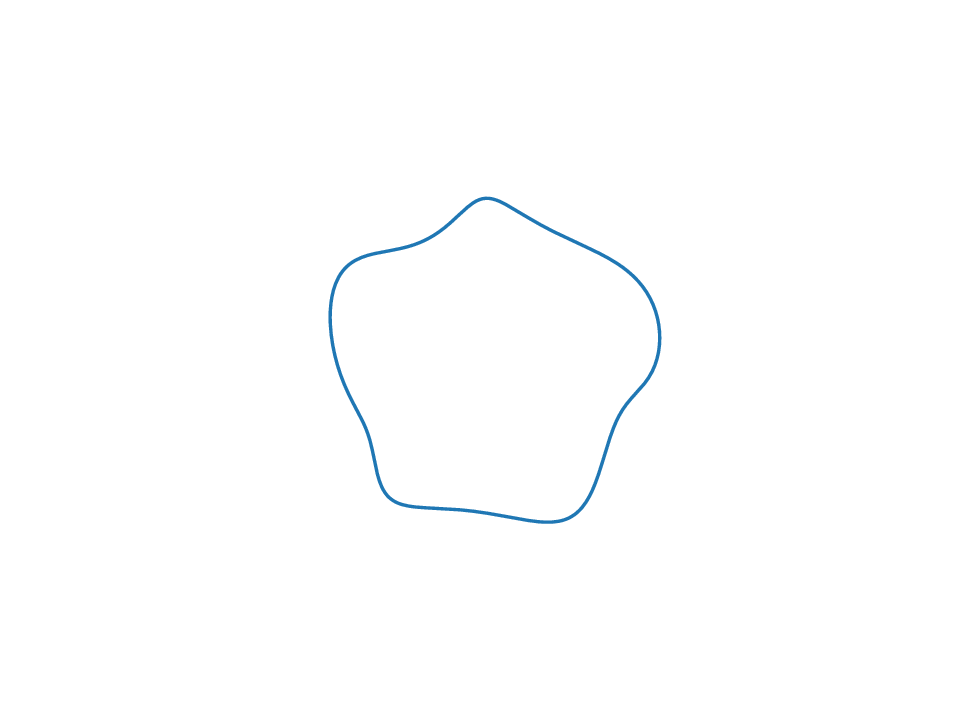}
 \put (100,25) {$\Omega_{test}$}
\end{overpic}
\end{center}

Table \ref{tab:eigval_methods_comparison2} shows the eigenvalues for the domain $\Omega_{test}$ obtained by FE for different mesh sizes compared to the eigenvalues obtained by MFS. The eigenvalue approximations are consistent and it appears that the eigenvalues obtained by FE slowly converge to the eigenvalues obtained by MFS. For the largest mesh size, the relative errors between FE and MFS are within $10^{-6}$.

We admit that we cannot guarantee that MFS is always more accurate than FE for general domains. The performance of both numerical methods naturally depends on parameter choices (number and location of collocation and source points for MFS; mesh and FE space for FE). We expect however that similar as in the case of the non-magnetic, standard Laplacian, eigenvalue accuracy of MFS is superior for domains sufficiently close to disks and domains whose boundary exhibits no points of extreme curvature. Problematic domains (for both MFS and FE) are domains with sharp corners or cusps. Such domains usually require much more collocation points and a small $\delta$ parameter resp. a very fine mesh to get decent accuracy.  

Shape optimization requires solving the eigenvalue problem for domains of different shape over and over again, so we want a robust, but also time-efficient eigenvalue solver. Convergence of the gradient descent method we described occurs already with much less accuracy than the $14$ digits of accuracy the MFS can potentially provide. We employ MFS with around $300$ collocation points and $\delta=0.015$ to $0.03$. We found this to be a robust parameter choice for the MFS that allows computation of eigenvalues for a wide class of domains with decently strong curvatures of the boundary curve and believe that it yield at least four to five correct significant digits for the domains we considered.

\subsection{Numerical minimizers of low eigenvalues}

In this section, we present numerical minimizers of problem \eqref{eq:lambda_n_min_problem} generated with the gradient descent shape optimization method. Computed were the minimizers for eigenvalues $\lambda_1(\Omega, B)$ to $\lambda_7(\Omega, B)$ at field strengths ranging from $B=1.0$ to $B=80.0$. For $B=0.0$, the minimizers were taken from \cite{Antunes2012}.

Initial domains were selected from a database of around $700$ domains with randomly chosen coefficients for the domain parametrization. For all domains in the database, we computed the lowest few eigenvalues for the set of field strengths $B=2.0, 4.0, 6.0, ..., 80.0$. Next, we sorted the domains for each combination of field strength $B=2.0, 4.0, 6.0, ..., 80.0$ and eigenvalue index $n=1,2,...,7$ according to the target eigenvalue $\lambda_n(\Omega, B)$. This step lead to $40 \cdot 7 = 280$ sorted lists of domains.

In the next step, we selected between five and ten domains from the top $10 \%$ of each list, i.e.\ the $10 \%$ of the list with lowest targest eigenvalue $\lambda_n(\Omega, B)$, as initial domains for the gradient descent method. The converged, final domains from these initial domains were then compared among each other and the domain with minimal $\lambda_n(\Omega, B)$ was declared numerically optimal. Selecting multiple initial domains is crucial to guarantee exploration of the domain parameter space to a sufficient degree and reduce the chances that a local but non-global minimum is found by the gradient descent method. Indeed, we found that multiple local minimizers appear quite frequently for higher eigenvalues, so starting from different initial domains is important when aiming to find a global minimizer. 

Up to this point, we have run the minimization algorithm for field strength $B=2.0, 4.0, 6.0, ..., 80.0$ and eigenvalue index $n=1,2,...,7$, thus expecting $280$ minimizers in total. However, as we will discuss later, the gradient descent method does not converge for all combinations of $B$ and $n$ due to formation of cusps as the iterations progress, so we did not obtain a minimizer for all combinations of $n$ and $B$.

For eigenvalues $\lambda_1$ and $\lambda_2$, we present the results obtained from the procedure described above. For eigenvalues $\lambda_3$ to $\lambda_7$, we continued with a second pass to refine the resolution in field strength $B$. We aimed to compute minimizers at a ten-fold finer resolution than in the first pass of generation of minimizers, i.e.\ for $B=1.0, 1.2, 1.4, ...$\ . Initial domains for these second pass computations were now chosen from the final minimizers from the first pass at field strengths $B=2.0, 4.0, 6.0, ..., 80.0$. We always selected the minimizers of the two closest field strengths from the first pass. If, for example, we aimed to minimize $\lambda_3$ at $B=5.2$, we chose the numerical minimizers of $\lambda_3$ at $B=4.0$ and $B=6.0$ from the first pass as initial domains for the gradient descent method. Both resulting domains were compared and the best of the two chosen. Selection of the initial domains from the minimizers of the first pass is motivated by the intuition that the minimizer should not change too much for small change in field strength $B$. We observed that this intuition appears to work for local minimizers. The gradient descent method converges quite quickly to a local minimizer if one starts with a local minimizer of a close field strength. However, since for the global minimizers we take the minimum over all local minimizers, minimizers can instanteneously switch shape at two subsequent field strengths when two branches of local minimizers intersect. In fact, we observe this phenomenon a few times. We believe that the resolution of the first pass is fine enough that restarting the minimization scheme from the two closest minimizers available from the first pass does not miss any potentially even lower lying branches of local minimizers. \\

\textbf{Numerical minimizers for $\lambda_1$} \\

According to Theorem \ref{thm:erdos_lambda1}, the unique minimizer of $\lambda_1$ among domains of equal measure is always given by a single disk. This is exactly what we observe numerically. For any field strength $B$ we have checked, the gradient descent scheme converges toward a domain that by eye looks indistinguishable from a disk. Likewise, the numerically found values for the minimal first eigenvalue are very close to the corresponding values of the first eigenvalue of a disk, see Figure \ref{fig:compcurve_lambda1}.\\

\begin{figure}[p]
\includegraphics[scale=0.5]{./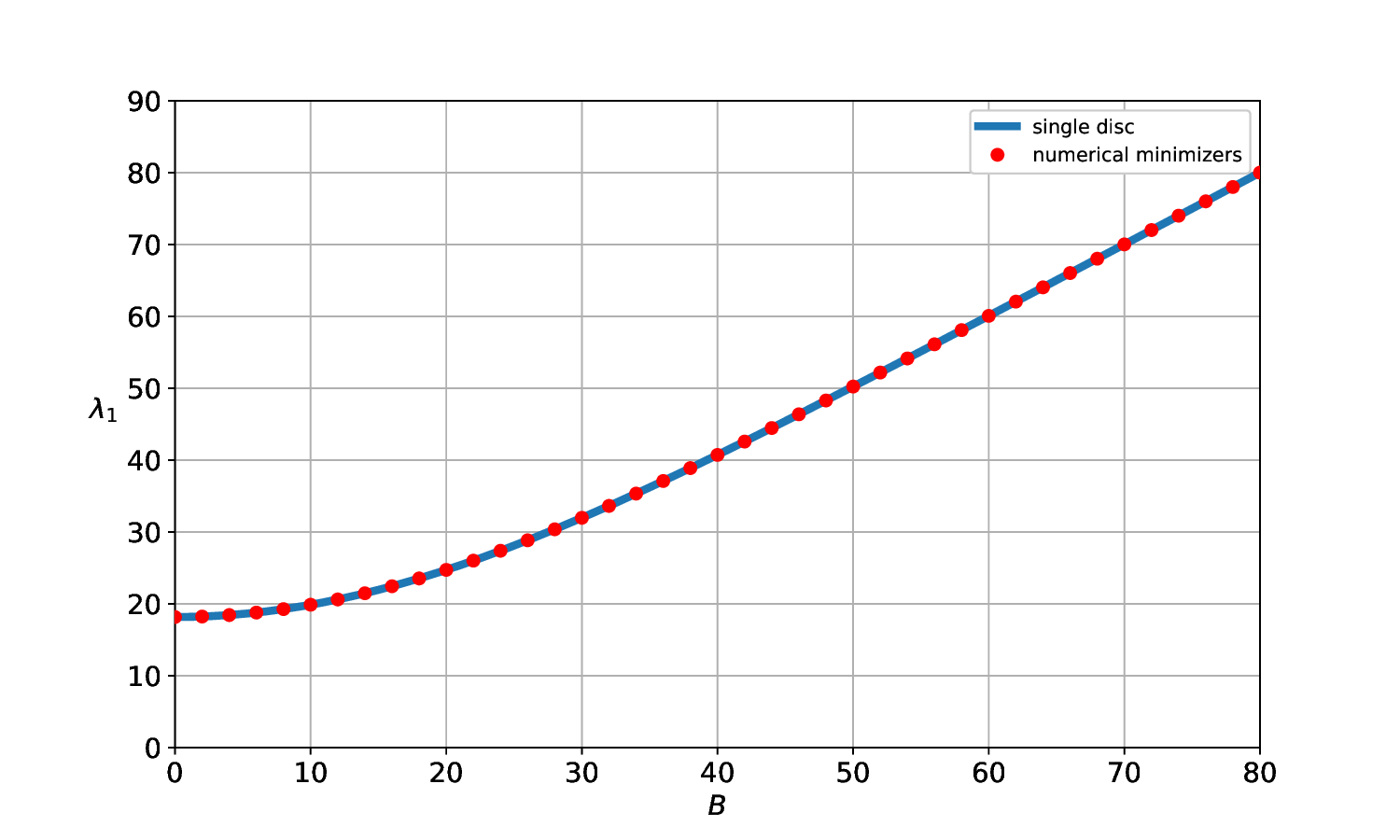}
\caption{Comparison of $\lambda_1$ of the numerical minimizers (red) with $\lambda_1$ of a single disk (blue) over different field strengths $B$. Numerical values have very good agreement with $\lambda_1$ of a single disk.} \label{fig:compcurve_lambda1}
\end{figure}

\begin{figure}
\includegraphics[trim={2cm 2cm 2cm 2cm},clip,scale=0.5]{./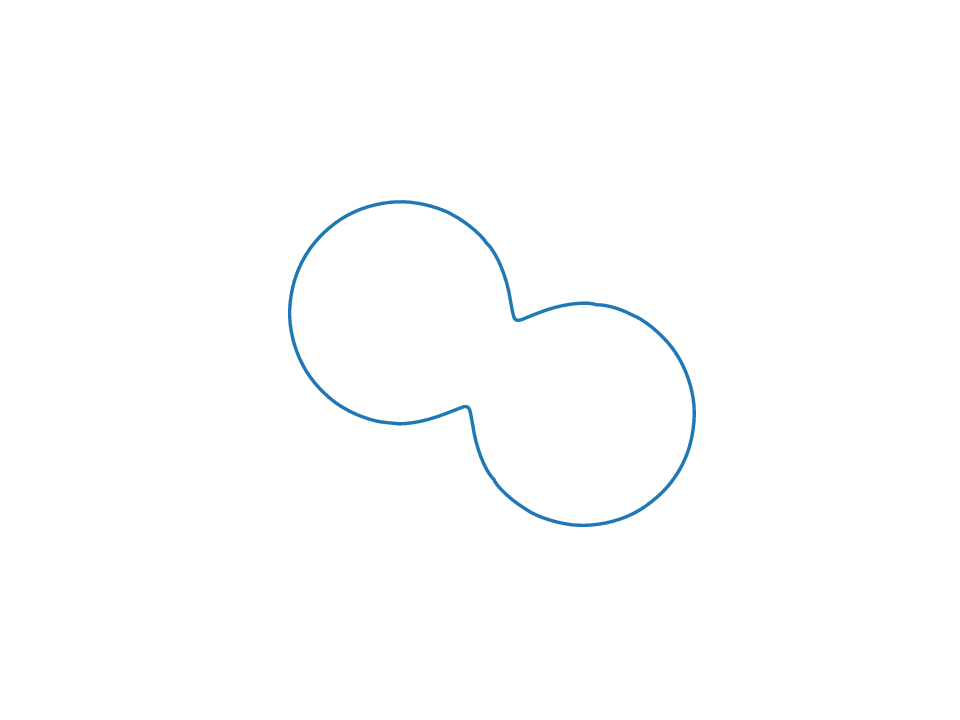}
\caption{Final admissible domain from minimization of $\lambda_2(\Omega, B=10.0)$.} \label{fig:lambda2_final_domain}
\end{figure}

\begin{figure} [p]
\includegraphics[scale=0.5]{./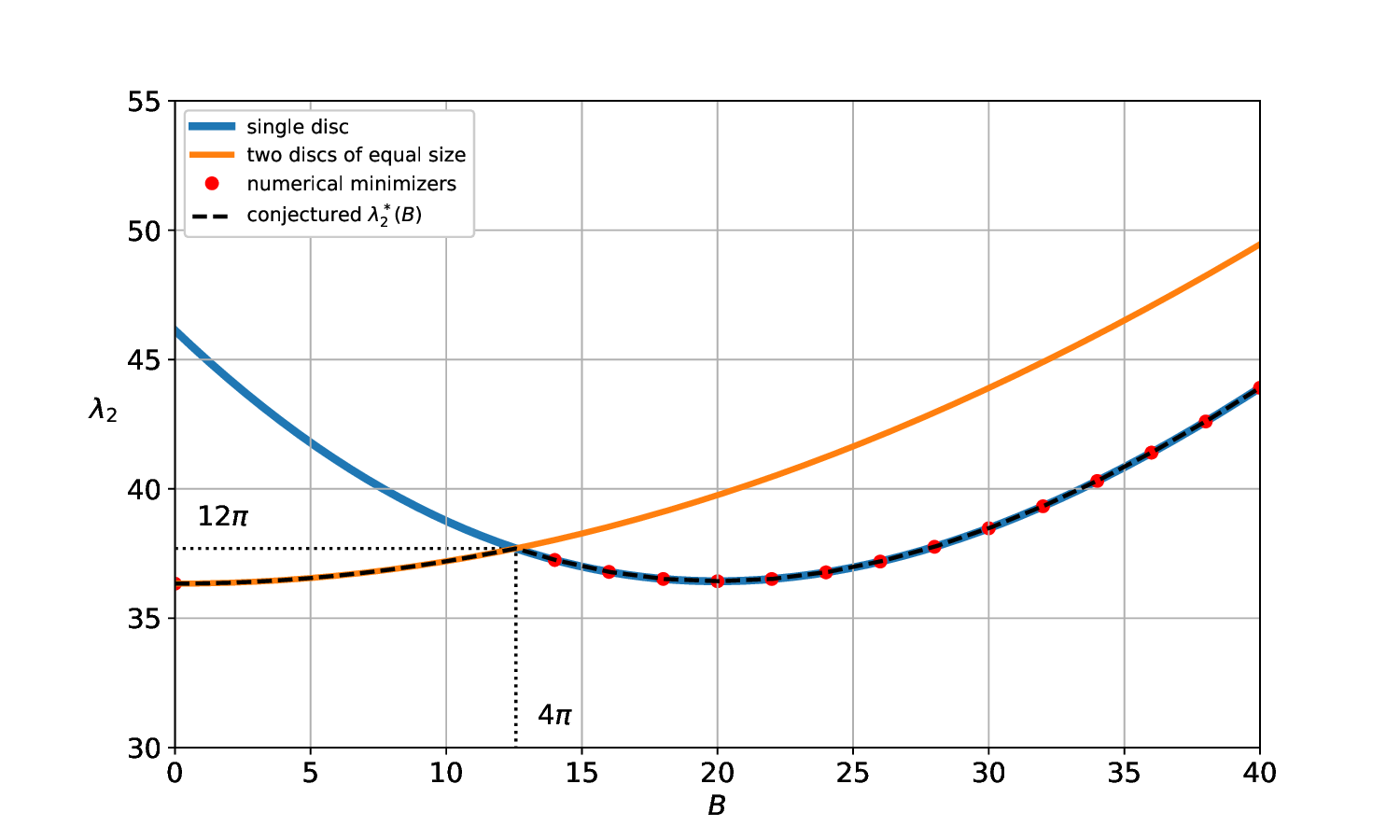}
\caption{Comparison of $\lambda_2$ of the numerical minimizers (red) with $\lambda_2$ of a single disk (blue) and two disjoint disks of equal size (orange) over different field strengths $B$. } \label{fig:compcurve_lambda2}
\end{figure}

\textbf{Numerical minimizers for $\lambda_2$} \\

For the second eigenvalue of the magnetic Dirichlet Laplacian, we observe that the gradient descent scheme does not converge for small $B$ (experimentally, for $B \leq 12.0$). Instead, the minimization scheme produces domains with two opposing cusps that become sharper as the minimization scheme progresses. At some point, the gradient descent scheme breaks down because one iteration yields coefficients for a boundary curve that is self-intersecting. The optimizaton scheme steps outside the region of admissible domains and must be stopped. Figure \ref{fig:lambda2_final_domain} shows the last admissible domain for $B=10.0$ before failure of the minimization scheme. 

We interpret this behaviour as an indication that the minimization procedure tries to separate the domain into two disjoint components but is hindered by the nature of our domain parametrization which only allows simply connected domains. In this regard, we recall Corollary \ref{cor:lambda2_connectedness} which states that if the optimal domain for $\lambda_2$ is the union of two disjoint components and if $B\leq 4\pi \approx 12.566$, then the optimal disconnected domain is given by two disks of equal size. Comparing the numerically attained value of $\lambda_2(\Omega, B)$ of the last admissible domains from the optimization scheme with the value of $\lambda_2$ for two equally sized disjoint disks shows that the latter is always smaller. So for $B \leq 12.0$, we have not found any domain $\Omega$ with $\lambda_2(\Omega, B)$ smaller than for two equally sized disjoint disks. We find this to be very convincing evidence that the minimizer of $\lambda_2(\Omega, B)$ should be two disjoint disks for $B\leq 4\pi$.

For $B \geq 14.0$, we find very different behaviour. The numerical minimization always yields a single disk. This might be surprising at first glance, but Corollary \ref{cor:lambda2_connectedness} states that any minimizer for $B> 4\pi \approx 12.566$ must be connected and we should not expect that the numerical minimization scheme runs into similar problems as it did for small $B$.

In Figure \ref{fig:compcurve_lambda2}, we compare the numerically obtained minimal eigenvalue $\lambda_2(\Omega, B)$ with $\lambda_2(\Omega, B)$ of a single disk and that of two disjoint disks for different $B$. We only obtained numerical minimizers for $B>4\pi$ where they appeared to be a single disk. For $B< 4\pi $, we did not find numerical minimizers and think that two disjoint disks are optimal. We also see that at $B=4\pi$, the second eigenvalue of a single disk and two disjoint disks coincide with $\lambda_2(\Omega, B) = 12 \pi$. We conjecture that $B=4\pi$ is a critical point where the minimizer of $\lambda_2(\Omega, B)$ undergoes a sudden phase transition from two disjoint disks to a single disk. 
Therefore, we expect that the minimal second eigenvalue among all open domains $\lambda_2^*(B)$ resembles the black dashed line in Figure \ref{fig:compcurve_lambda2}. We summarize our observations in the following conjecture.

\begin{conjecture}[Minimizers of $\lambda_2$] \label{conj:lambda2_minimizers}
If $B\geq 4\pi$, then, among all bounded, open domains $\Omega \subset \mathbb{R}^2$ with $|\Omega|=1$, $\lambda_2(\Omega,B)$ is minimized by a single disk. If $B \leq 4\pi$, then $\lambda_2(\Omega,B)$ is minimized by two disjoint disks of equal measure.
\end{conjecture}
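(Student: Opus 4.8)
The plan is to reduce the conjecture to a single sharp lower bound and then isolate the one genuinely hard inequality. Writing $D$ for the unit-area disk, the two candidate configurations are the single disk, with value $\lambda_2(D,B)$, and two equal disjoint disks of area $1/2$, whose second eigenvalue equals the first eigenvalue of a disk of area $1/2$, namely $2\lambda_1^*(B/2)$. By the scaling identity (Proposition~\ref{prop:scaling_identity}), Erd\H{o}s' theorem (Theorem~\ref{thm:erdos_lambda1}) and the strict monotonicity of $B\mapsto\lambda_1(D,B)/B$, the best two-component competitor is exactly two equal disks, and the crossover computation already carried out in Corollary~\ref{cor:lambda2_connectedness} gives
\[
\lambda_2(D,B) < 2\lambda_1^*(B/2) \ \text{ for } B > 4\pi , \qquad \lambda_2(D,B) > 2\lambda_1^*(B/2) \ \text{ for } B < 4\pi ,
\]
with equality $\lambda_2(D,4\pi)=2\lambda_1^*(2\pi)=12\pi$ at the crossover. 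Hence the conjecture is equivalent to the single assertion that $\lambda_2(\Omega,B)\ge\min\{\lambda_2(D,B),\,2\lambda_1^*(B/2)\}$ for every bounded open $\Omega$ with $|\Omega|=1$, i.e.\ to a \emph{magnetic Krahn--Szeg\H{o} inequality} declaring the single disk optimal for $B\ge4\pi$ and two equal disks optimal for $B\le4\pi$.

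To attack this inequality I would first dispose of the disconnected competitors and reduce to connected domains: Corollary~\ref{cor:lambda2_connectedness} already forces any disconnected minimizer to be two equal disks, and together with Theorem~\ref{thm:wolf-keller} this pins the disconnected side to the value $2\lambda_1^*(B/2)$. It then suffices to bound $\lambda_2(\Omega,B)$ from below, over \emph{connected} unit-area domains, by $\lambda_2(D,B)$ when $B\ge4\pi$ and by $2\lambda_1^*(B/2)$ when $B\le4\pi$. The natural model is the classical Krahn--Szeg\H{o} argument: take a second eigenfunction $u_2$, split $\Omega$ along its nodal set into two nodal domains on each of which $u_2$ is a first eigenfunction, apply the magnetic Faber--Krahn theorem (Theorem~\ref{thm:erdos_lambda1}) to each piece, and optimize the resulting bound over the area split using the monotonicity of $\lambda_1$ in the area. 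I would try to run precisely this scheme, with Erd\H{o}s' theorem replacing the ordinary Faber--Krahn inequality.

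The decisive obstruction, which I expect to be the main difficulty and is presumably the reason the statement is only conjectural, is that magnetic eigenfunctions are complex-valued: $u_2$ generically vanishes only on a discrete set of points, so there is no nodal curve dividing $\Omega$ into two subdomains and the nodal-domain step collapses entirely. Finding a substitute---perhaps a magnetic rearrangement adapted from Erd\H{o}s' proof of Theorem~\ref{thm:erdos_lambda1}, or a balanced geometric cut of $\Omega$ chosen to control the magnetic Rayleigh quotients of two orthogonal trial states---is the crux. Any such argument must also respect that the conclusion genuinely depends on the field strength, since in the weak-field limit the disk is \emph{not} the optimal connected shape; it must therefore use $B\ge4\pi$ quantitatively, for instance through the strict monotonicity of $B\mapsto\lambda_1(D,B)/B$ and the $B\to\infty$ sharpness of the lower bounds in Theorem~\ref{thm:erdoes_li_yau} and Corollary~\ref{cor:magnetic_polya}. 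As a fallback in the regime $B\ge4\pi$ I would try a local-to-global route: check that the disk is a critical shape for $\lambda_2(\cdot,B)$, show its second shape derivative is positive (up to rigid motions) for $B\ge4\pi$ to obtain local minimality, and attempt to propagate this to global optimality by continuity in $B$ anchored at the explicit equality case $B=4\pi$---though closing the local-to-global gap would itself demand substantial new input.
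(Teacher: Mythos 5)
The statement you are asked to prove is labelled a \emph{conjecture} in the paper, and the paper itself offers no proof: its support consists of the rigorous dichotomy in Corollary~\ref{cor:lambda2_connectedness} (any disconnected minimizer must be two equal disks, and for $B>4\pi$ no minimizer can be disconnected) together with numerical evidence from the gradient-descent scheme. Your reduction is faithful to that rigorous part. The identification of the two candidate values $\lambda_2(D,B)$ and $2\lambda_1^*(B/2)$, the use of Theorem~\ref{thm:wolf-keller} and the strict monotonicity of $B\mapsto\lambda_1(D,B)/B$ to pin the disconnected competitor to two equal disks, and the crossover at $B=4\pi$ with common value $12\pi$ all match what the paper actually establishes. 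You are also right about where the difficulty lies: the classical Krahn--Szeg\H{o} argument hinges on Courant's nodal theorem splitting $\Omega$ into two nodal domains on which $u_2$ restricts to a first eigenfunction, and for the magnetic operator $u_2$ is genuinely complex-valued, so its zero set is generically a discrete set of points and no such partition exists. This is precisely why the statement remains open.

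That said, your text is a research program, not a proof: the key inequality $\lambda_2(\Omega,B)\ge\min\{\lambda_2(D,B),\,2\lambda_1^*(B/2)\}$ over \emph{connected} unit-area domains is exactly the part that neither you nor the paper establishes, and none of the proposed substitutes (a magnetic rearrangement, a balanced geometric cut, or the local-to-global route via a second shape derivative at the disk) is carried out. Two smaller caveats: first, your claim that $\lambda_2(D,B)>2\lambda_1^*(B/2)$ for all $B<4\pi$ is consistent with Figure~\ref{fig:compcurve_lambda2} but is not proved in the paper (only the direction $B>4\pi$ is argued, via monotonicity of $\lambda_1(D,B)/B$); if you want to use it you need a separate argument. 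Second, the reduction to connected domains via Theorem~\ref{thm:wolf-keller} presupposes the existence of minimizers for \eqref{eq:lambda_n_min_problem}, which the paper explicitly assumes rather than proves; any rigorous treatment of the conjecture would have to address this as well. In short, the proposal is an accurate and well-motivated reformulation of the conjecture, with the obstruction correctly diagnosed, but it does not constitute a proof and could not, since the statement is open.
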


This conjecture should be seen in the light of the Krahn-Szegö theorem that asserts that without magnetic field (i.e.\ $B=0.0$), the second eigenvalue of the Dirichlet Laplacian is minimized by two disks of equal size. Our numerical computations suggest that the Krahn-Szegö result extends to homogeneous magnetic fields of low strength but breaks after magnetic flux exceeds the threshold of $\phi=2$.\\

\textbf{Numerical minimizers for $\lambda_3$} \\

The third eigenvalue is the lowest eigenvalue for which numerical minimization produces domains that are not disks. This happens in the range $B=4.6$ to $18.8$, see Figure \ref{fig:lambda3_final_domain} for three examples. All non-circular numerical optimizers exhibit a striking rotational symmetry of order $3$. As suggested by the plots, the optimal domains develop sharper and sharper cusps as the field strength $B$ increases. For $B\geq 12.2$, we run into the same problem as we encountered for $\lambda_2(\Omega,B)$: the cusps become too sharp and the optimization scheme leaves the region of admissible domains. Hence, we do not claim convergence for $12.2 \leq B \leq 18.8$. 

There are two remaining ranges of field strengths: $B \leq 4.4$ and $B \geq 19.0$. In both cases, we always observe convergence toward a single disk. It is interesting to note that the transition between the ranges $B\leq 4.4$ and $B\geq 4.6$ seems to be smooth in the sense that the domains with rotational symmetry of order $3$ seems to resemble a disk more and more when the field strength is lowered toward $B=4.4$.

\begin{figure}[t]
\begin{overpic}[trim={4cm 3cm 4cm 2.5cm},clip,scale=0.35]{./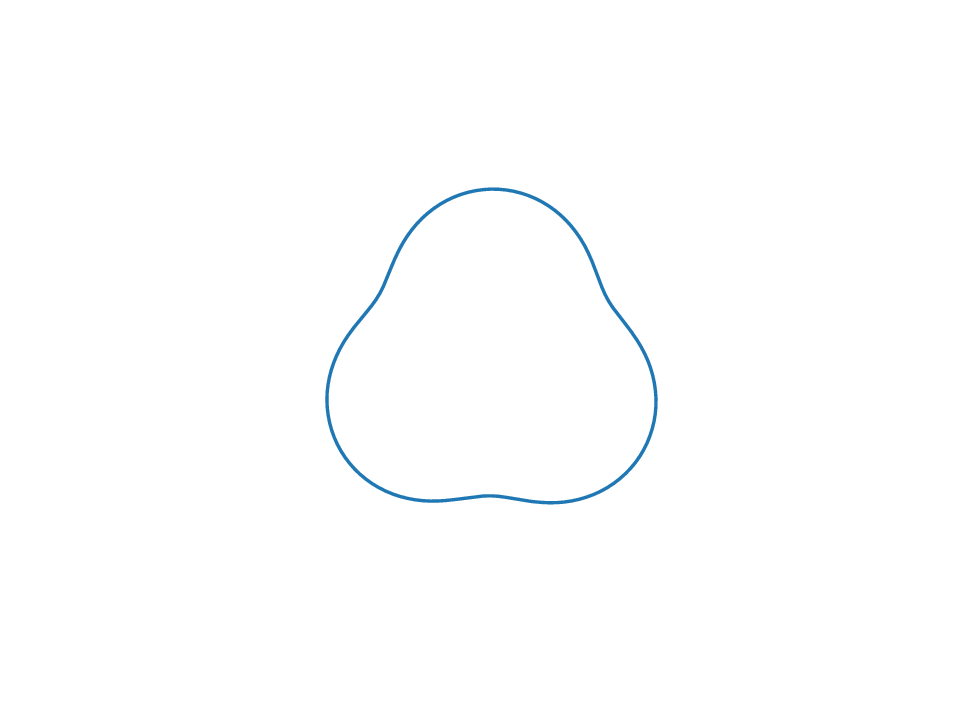}
 \put (32,-15) {$B=5.0$}
\end{overpic}
\begin{overpic}[trim={4cm 3cm 4cm 2.5cm},clip,scale=0.35]{./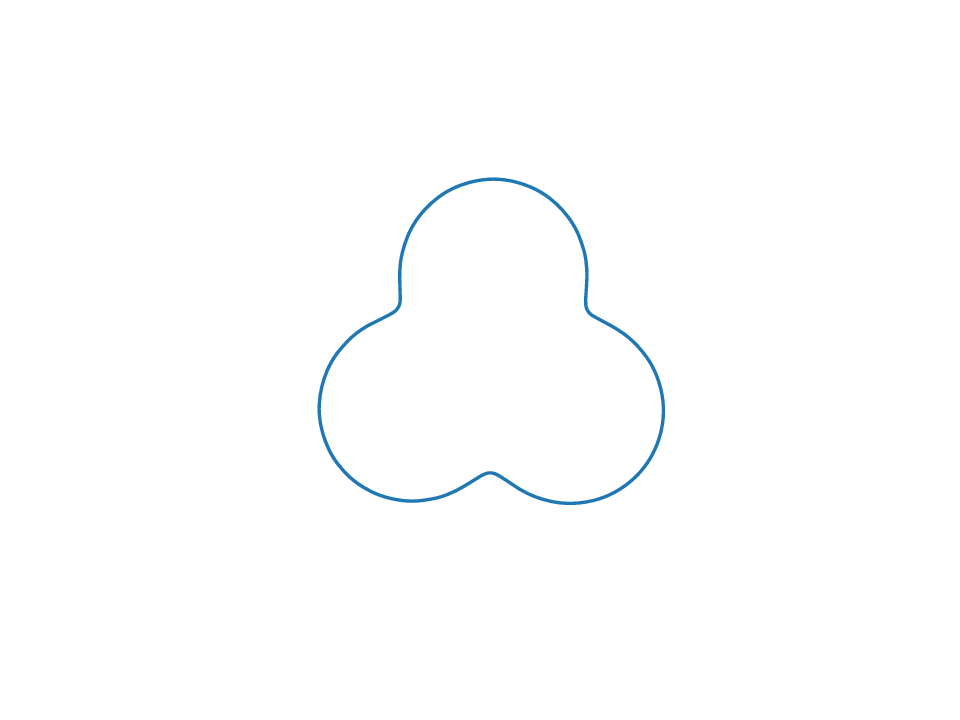}
 \put (32,-15) {$B=7.0$}
\end{overpic}
\begin{overpic}[trim={4cm 3cm 4cm 2.5cm},clip,scale=0.35]{./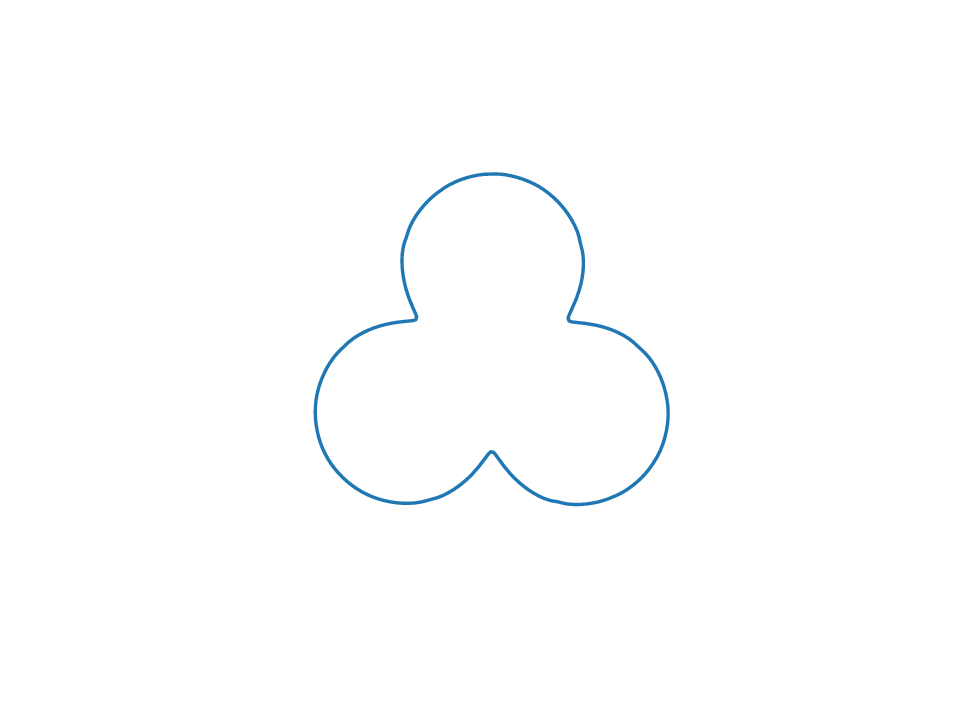}
 \put (30,-15) {$B=12.0$}
\end{overpic}
\vspace{0.5cm}
\caption{Final domains from minimization of $\lambda_3(\Omega, B)$ for various $B$.} \label{fig:lambda3_final_domain}
\end{figure}

\begin{figure}[t]
\includegraphics[scale=0.5]{./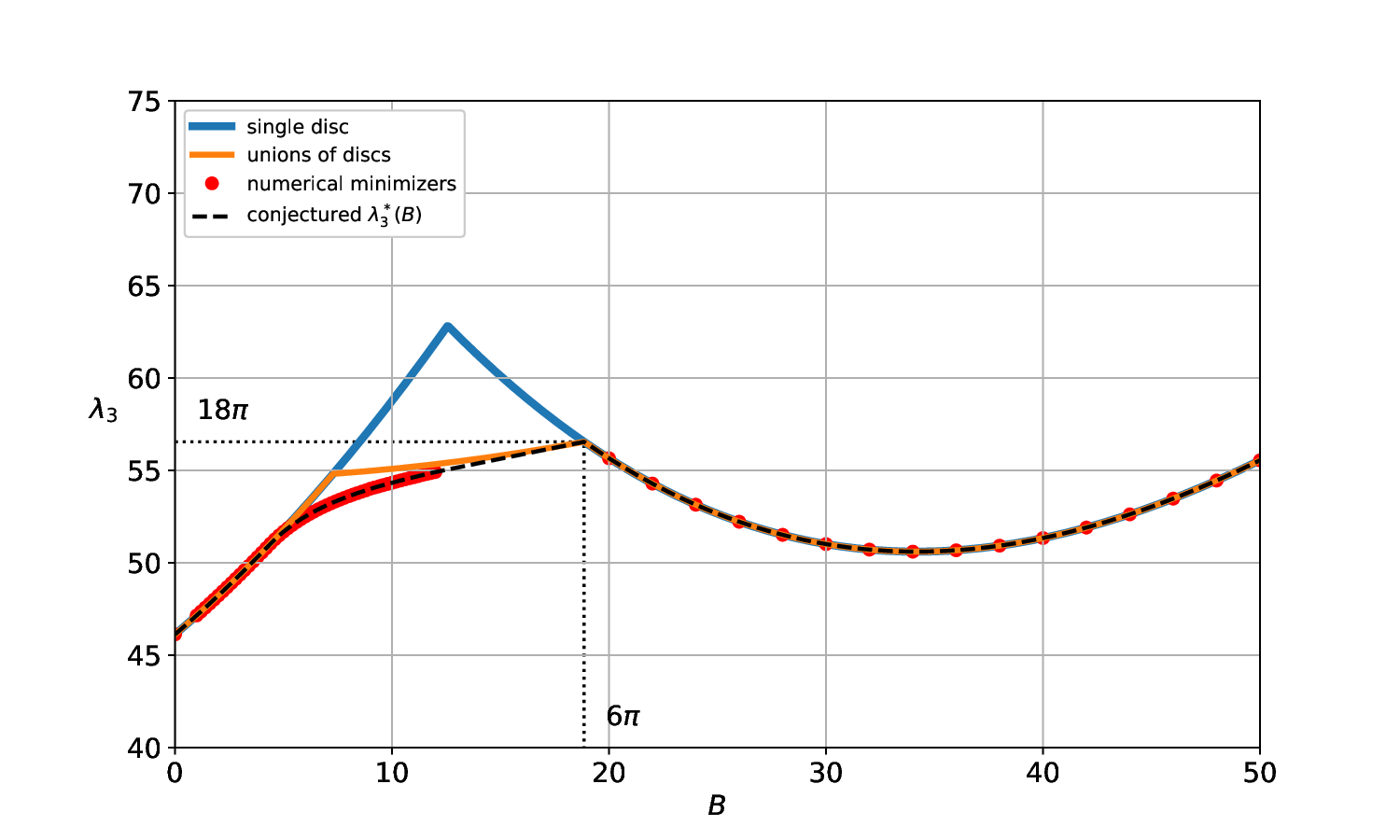}
\caption{Comparison of $\lambda_3$ of the numerical minimizers (red) with $\lambda_3$ of a single disk (blue) and the minimal $\lambda_3$ obtained over arbitrary disjoint unions of disks (orange) over different field strengths $B$. } \label{fig:compcurve_lambda3}
\end{figure}

Figure \ref{fig:compcurve_lambda3} shows the values of $\lambda_3(\Omega, B)$ of the numerical minimizers in comparison to the third eigenvalue of a single disk (blue line) and the minimal third eigenvalue obtained among arbitrary disjoint unions of disks (orange line). We see two intervals where the blue and the orange line coincide, meaning that a single disk is optimal among arbitrary disjoint unions of disks. In the interval where the orange curves lies strictly below the blue curve, the minimal third eigenvalue among arbitrary disjoint unions of disks is not obtained by a single disk. It is in fact obtained by three disks of equal size and said interval ends at $B=6\pi\approx 18.850$ where the eigenvalue $\lambda_3(\Omega, B)$ of a single disk and of three equally sized disks coincide with $\lambda_3(\Omega, B) = 18 \pi$.

Let us discuss our numerical minimizers, which are connected domains, in view of minimization among general, possibly disconnected, domains. Could there exist a disconnected minimizer with lower $\lambda_3(\Omega, B)$ than what we found among connected domains? If we assume Conjecture \ref{conj:lambda2_minimizers} to be true, Theorem \ref{thm:wolf-keller} guarantees us that any disconnected minimizer consists of either two disks of equal size or three disks with at least two of them having equal size, since the components must consist of minimizers of $\lambda_1(\Omega, B)$ and $\lambda_2(\Omega, B)$. It is therefore sufficient to compare our numerical minimizers to the minimizers obtained among arbitrary disjoint unions of disks. 

We observe in Figure \ref{fig:compcurve_lambda3} that for $B=4.6$ to $12.0$, the numerically obtained optimal values for $\lambda_3(\Omega, B)$ are strictly smaller than any $\lambda_3(\Omega, B)$ obtained among arbitrary disjoint unions of disks. For $B\leq 4.4$, the numerically obtained optimal values for $\lambda_3(\Omega, B)$ are in very good agreement with those of a single disk. We recall that it is conjectured that the third eigenvalue of the non-magnetic Dirichlet Laplacian (i.e.\ $B=0.0$) is minimized by a single disk \cite{Henrot2006}. It appears that this conjecture should continue to be true for small $B$.

We have no results for the range $B=12.2$ to $18.8$, but we believe that the optimizers are still connected and their eigenvalue $\lambda_3(\Omega,B)$ is smaller than for any disjoint union of disks. We think that the optimizers in this range of $B$ have very sharp cusps so they cannot be reached by our domain parametrization which has only a low number of Fourier coefficients and thus limited resolution for the boundary curve. From the shapes seen in Figure \ref{fig:lambda3_final_domain} it seems reasonable to expect that the optimizers approach three disconnected disks in a suitable sense when the field strength increases to $B= 6\pi \approx 18.850$. For any larger field strength, the minimizer appears to be a disk. Similar to the second eigenvalue, we think that the minimizers undergo sudden phase transition at the threshold $B=6\pi$ where they turn into a single disk. Overall, we conjecture that the minimal third eigenvalue among all open domains $\lambda_3^*(B)$ looks like the black dashed line in Figure \ref{fig:compcurve_lambda3}.\\

\textbf{Numerical minimizers for $\lambda_4$} \\ 

Similar to the case of the second eigenvalue, optimizers for the fourth eigenvalue show convergence issues for small $B$ (experimentally, $B\leq 3.8$). This is not too surprising in view of the conjecture that the minimizer of $\lambda_4$ of the non-magnetic Dirichlet Laplacian is given by two disjoint disks (but unlike for the second eigenvalue, of two different sizes), see \cite{Henrot2006}. We think that the minimizers for $B$ close to zero are either disconnected as well or have very sharp cusps which leads to failure of the optimization routine.

For $4.0 \leq B \leq 19.4$, we observe convergence and obtain simply connected domains. Six minimizers of this range are seen in Figure \ref{fig:lambda4_final_domain}. The minimizers seem to deform continuously from an elongated shape with rotational symmetry of degree $2$ to a disk and then to a flower shape with four petals having rotational symmetry of degree $4$. Then, for a small range above $19.4$ and just below $B=8\pi \approx 25.133$, convergence issues arise again as the cusps of the flower shape become too sharp. Above $B=8\pi$, we again observe convergence and all  minimizers appear to be a single disk.

Figure \ref{fig:compcurve_lambda4} compares the numerically obtained minimal eigenvalues $\lambda_4(\Omega, B)$ with those of a single disk (blue), arbitrary disjoint unions of disks (orange) and the optimal disjoint union of previous minimizers (green). The last curve was computed with \eqref{eq:wolf-keller} and under the assumption that the conjectured curves $\lambda_k^*(B)$ for $k=1,2,3$ for previous eigenvalues are optimal. Note that except at $B=0.0$ and $B=8\pi$, the green curve lies significantly above the minimal eigenvalues $\lambda_4(\Omega, B)$ of our simply connected numerical minimizers, which is a strong indication that the global minimizers are indeed simply connected in the ranges where our method converged.\\

\begin{figure}[t]
\begin{overpic}[trim={4cm 3cm 4cm 2.5cm},clip,scale=0.35]{./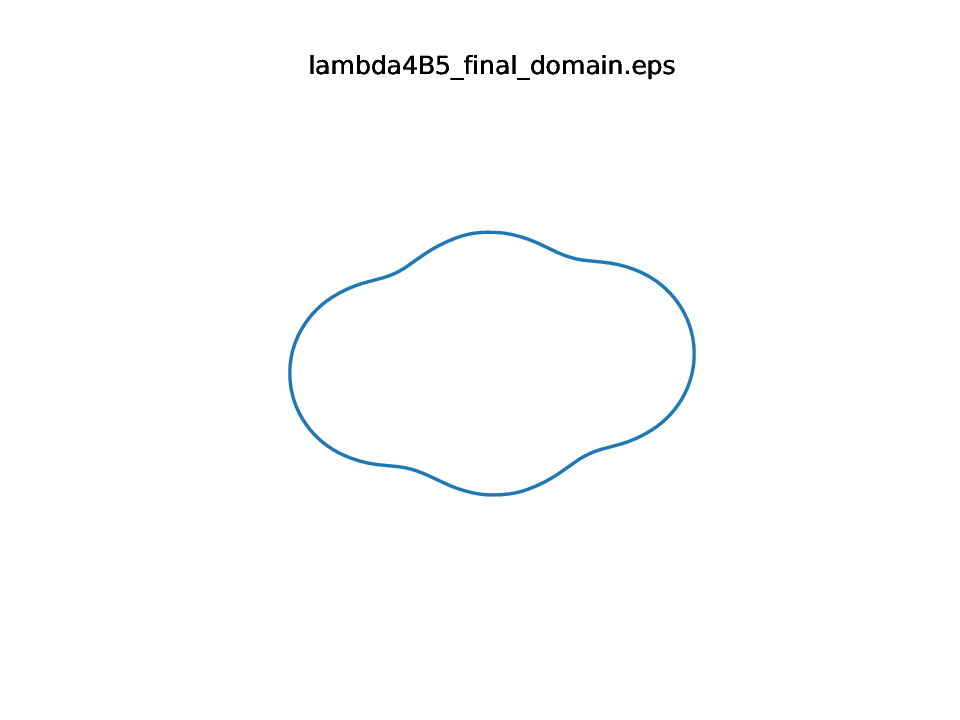}
 \put (32,-15) {$B=5.0$}
\end{overpic}
\begin{overpic}[trim={4cm 3cm 4cm 2.5cm},clip,scale=0.35]{./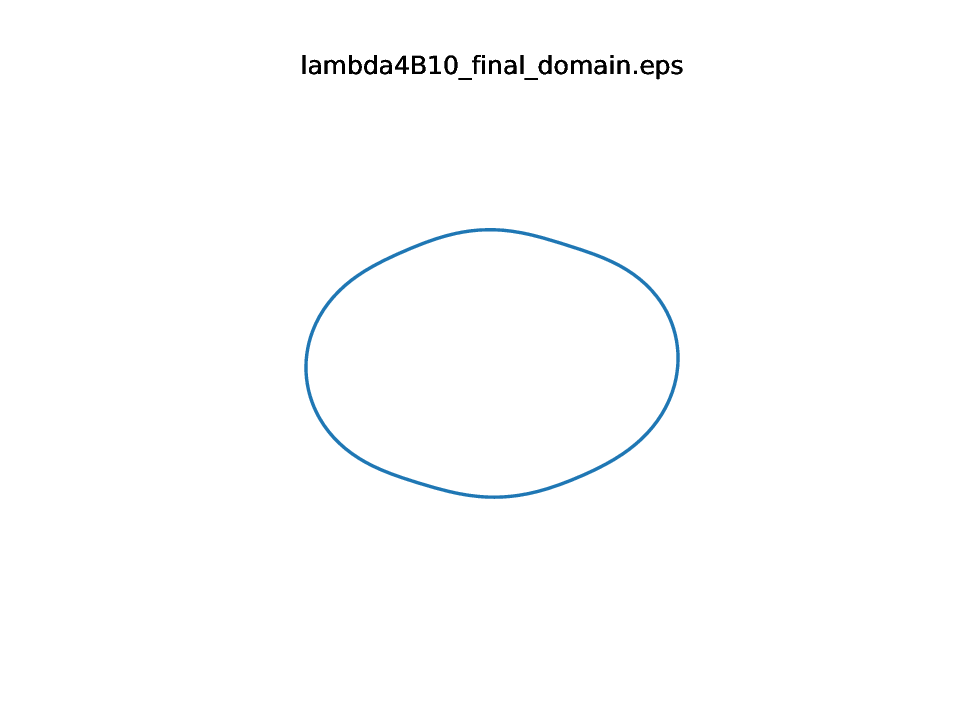} 
 \put (32,-15) {$B=10.0$}
\end{overpic}
\begin{overpic}[trim={4cm 3cm 4cm 2.5cm},clip,scale=0.35]{./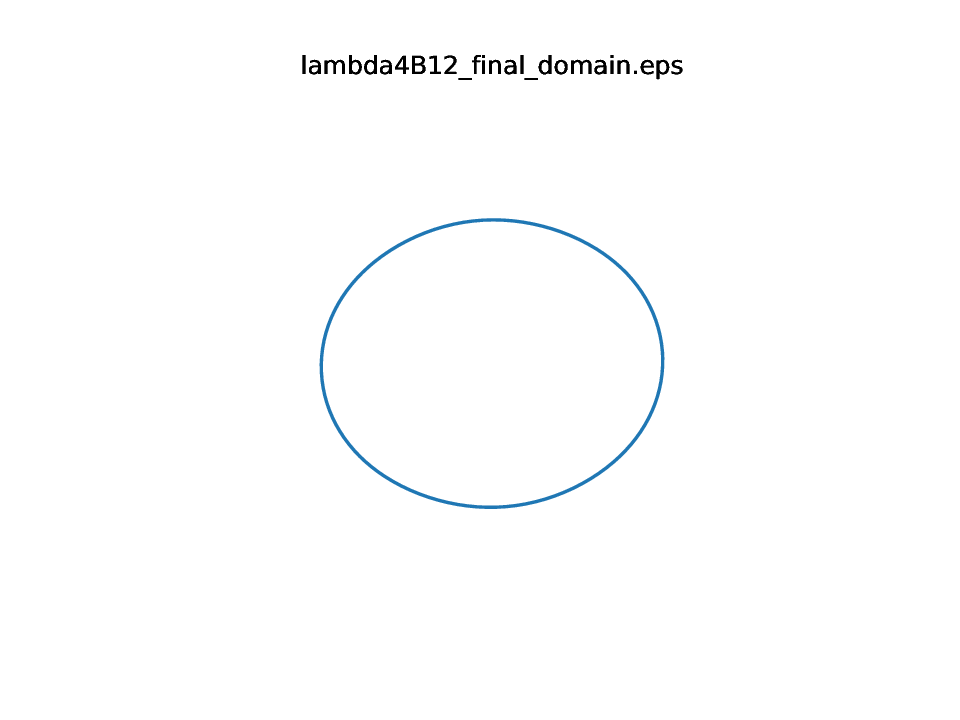}
 \put (32,-15) {$B=12.0$}
\end{overpic}
\vspace{1cm}

\begin{overpic}[trim={4cm 3cm 4cm 2.5cm},clip,scale=0.35]{./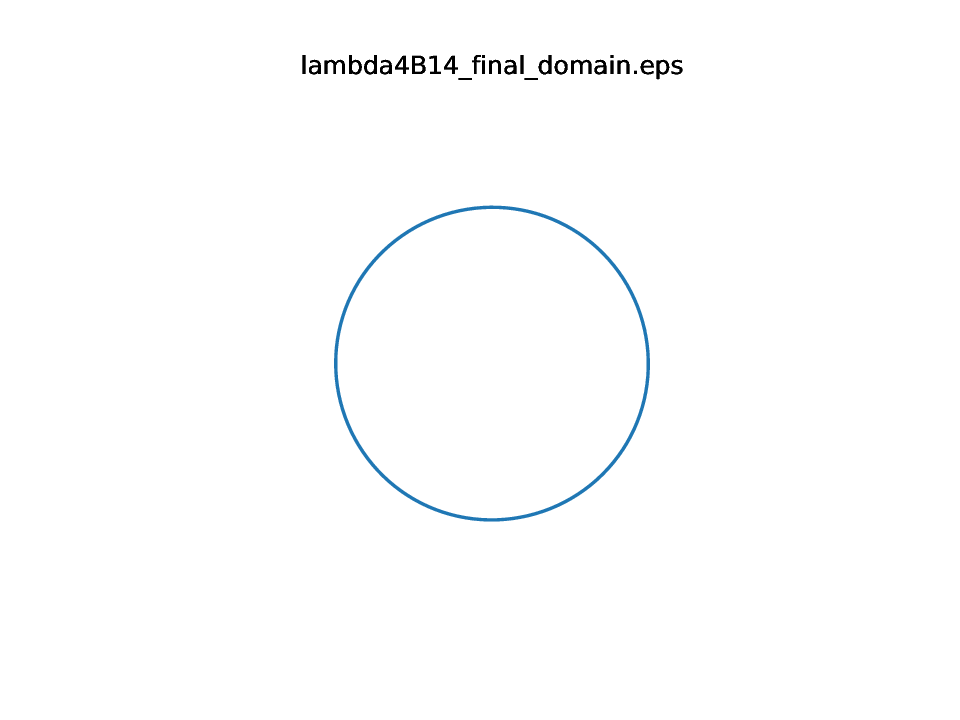}
 \put (32,-15) {$B=14.0$}
\end{overpic}
\begin{overpic}[trim={4cm 3cm 4cm 2.5cm},clip,scale=0.35]{./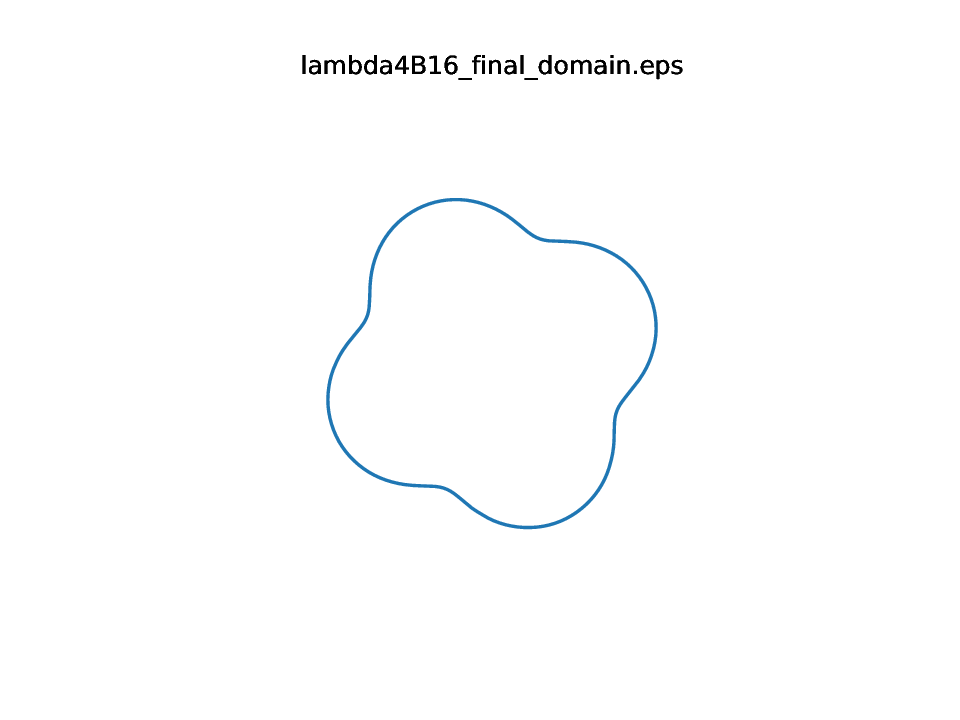} 
 \put (32,-15) {$B=16.0$}
\end{overpic}
\begin{overpic}[trim={4cm 3cm 4cm 2.5cm},clip,scale=0.35]{./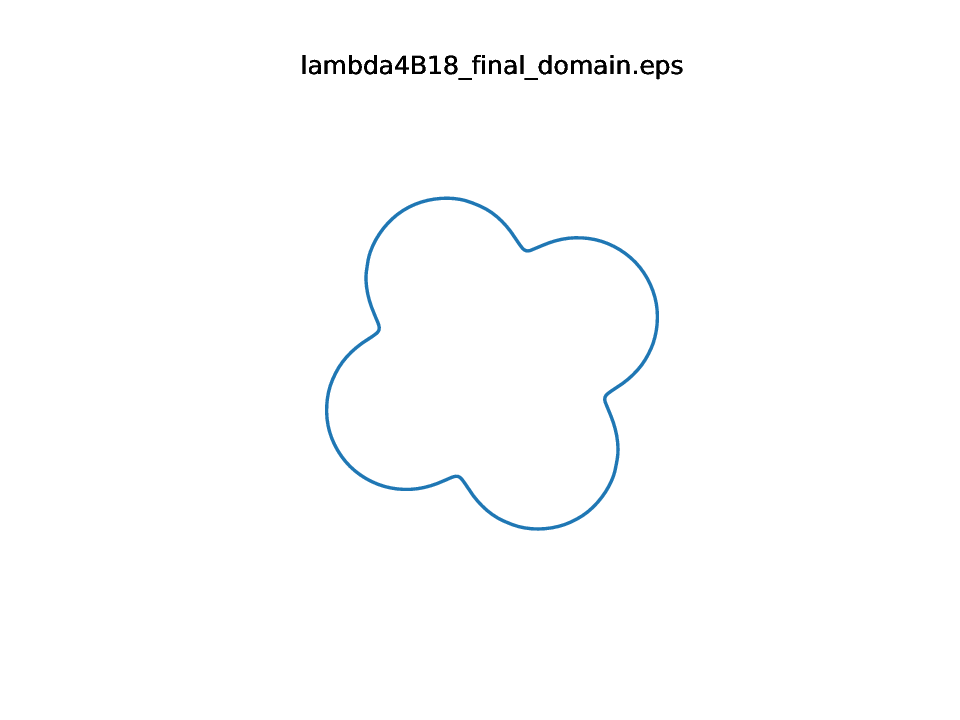}
 \put (32,-15) {$B=18.0$}
\end{overpic}
\vspace{0.5cm}
\caption{Final domains from minimization of $\lambda_4(\Omega, B)$ for various $B$.} \label{fig:lambda4_final_domain}
\end{figure}

\begin{figure}[t]
\includegraphics[scale=0.5]{./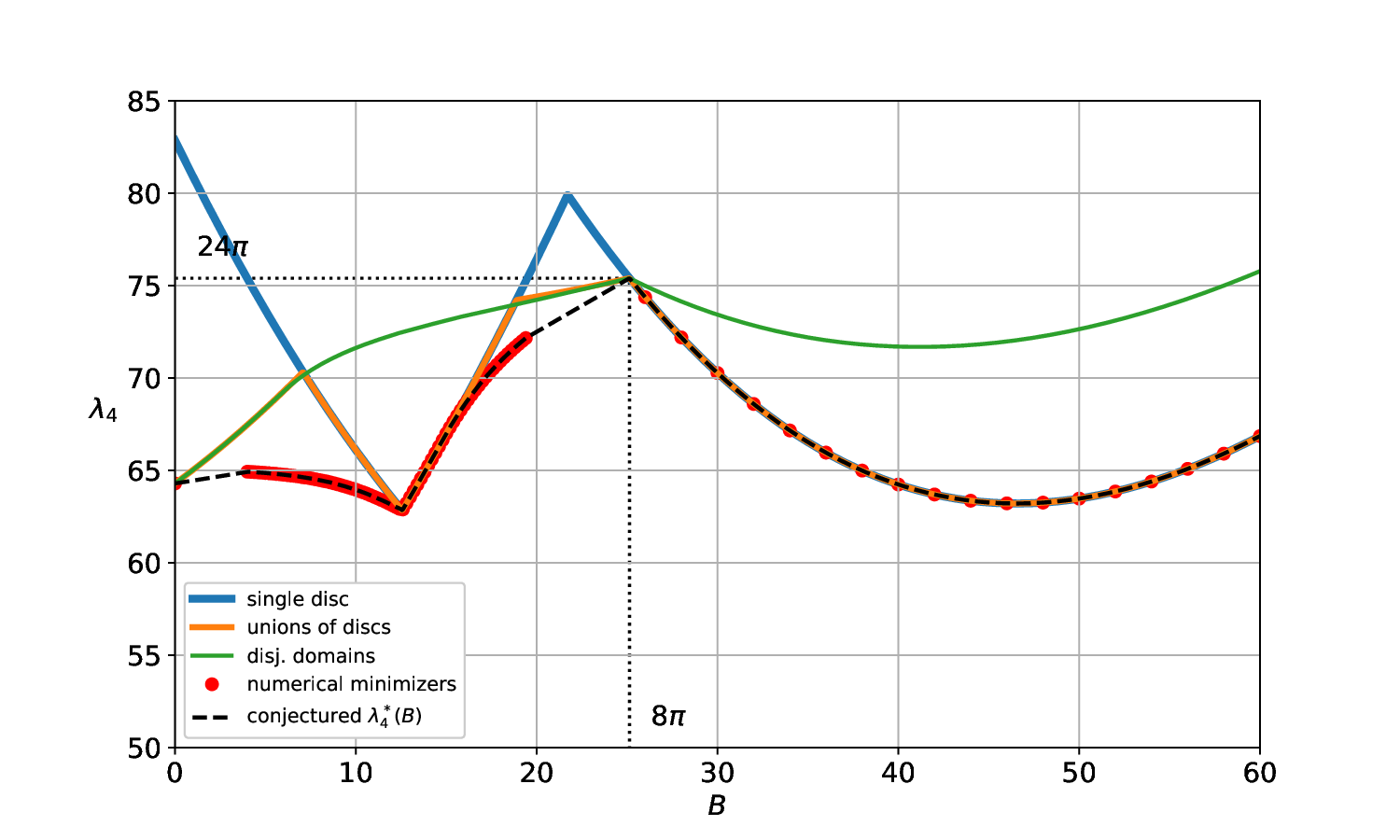}
\caption{Comparison of $\lambda_4$ of the numerical minimizers (red) with $\lambda_4$ of a single disk (blue), the minimal $\lambda_4$ obtained over arbitrary disjoint unions of disks (orange) and the minimal $\lambda_4$ obtained over disjoint unions of previous minimizers (green) over different field strengths $B$. } \label{fig:compcurve_lambda4}
\end{figure}

\textbf{Numerical minimizers for $\lambda_5$, $\lambda_6$ and $\lambda_7$}\\

The eigenvalues $\lambda_5$, $\lambda_6$ and $\lambda_7$ lead to convergence for all field strengths except just below $B=10 \pi \approx 31.416$, $B=12 \pi \approx 37.699$ and $B=14 \pi \approx 43.982$, respectively. Figures \ref{fig:lambda5_final_domain}, \ref{fig:lambda6_final_domain} and \ref{fig:lambda7_final_domain} show shapes of numerical minimizers of $\lambda_5$, $\lambda_6$ and $\lambda_7$ at various field strengths below $B=10 \pi $, $B=12 \pi $ and $B=14 \pi $. Above these critical field strengths, all numerical minimizers of $\lambda_5$, $\lambda_6$ and $\lambda_7$ appear to be a single disk. Recall that the disk also appeared to be the numerical minimizer for $\lambda_2$ if $B\geq 4\pi$, for $\lambda_3$ if $B\geq 6\pi$ and for $\lambda_4$ if $B\geq 8\pi$. Supported by this numerical evidence, it seems reasonable to make the following conjecture.

\begin{conjecture}[Minimizers of $\lambda_n$]
Let $n \geq 2$. If $B\geq 2\pi n$, then, among all bounded, open domains $\Omega \subset \mathbb{R}^2$ with $|\Omega|=1$, $\lambda_n(\Omega,B)$ is minimized by a single disk, i.e.
\begin{align*}
\lambda_n(\Omega,B) \geq \lambda_n(D, B),
\end{align*}
where $D$ is a disk with $|D|=1$.
\end{conjecture}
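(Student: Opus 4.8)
The plan is to exploit the algebraic structure of the lowest Landau level, which the hypothesis $B \ge 2\pi n$ singles out, and to upgrade Erd\H{o}s' symmetrization argument (Theorem~\ref{thm:erdos_lambda1}) from the ground state to the first $n$ eigenvalues simultaneously. In the symmetric gauge the magnetic Laplacian factorizes as $H_B^\Omega = a^\dagger a + B$, where $a$ is the magnetic annihilation operator whose kernel is the lowest Landau level; consequently
\begin{align*}
a[u] = \|a u\|_{L^2(\Omega)}^2 + B\,\|u\|_{L^2(\Omega)}^2 \ge B\,\|u\|_{L^2(\Omega)}^2
\end{align*}
for every $u$ supported in $\Omega$, with equality exactly on the Landau level. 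The first step is therefore to pass to the shifted operator $H_B^\Omega - B \ge 0$ and to prove a Faber-Krahn inequality for its $n$-th eigenvalue, since $\lambda_n(\Omega,B) = B + \mu_n(\Omega,B)$ with $\mu_n$ the $n$-th Dirichlet eigenvalue of $a^\dagger a$.

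Writing the complex coordinate $z = x_1 - \mathrm i x_2$ and substituting $w = e^{B|z|^2/4}\,u$ --- which preserves the Dirichlet condition $w|_{\partial\Omega} = 0$ because the Gaussian factor never vanishes --- converts $\|au\|^2$ into a Gaussian-weighted Cauchy--Riemann energy, so that
\begin{align*}
a[u] = 4\int_\Omega \big|\partial_{\bar z} w\big|^2\, e^{-B|z|^2/2}\dd x + B\int_\Omega |w|^2\, e^{-B|z|^2/2}\dd x .
\end{align*}
The null space of the first integral consists precisely of the holomorphic $w$, i.e.\ of the Landau orbitals $w = z^k$. Thus $\mu_n(\Omega,B)$ is the $n$-th min-max value of the weighted $\bar\partial$-energy over Dirichlet functions on $\Omega$, and the conjecture becomes an isoperimetric (Faber-Krahn) statement for this weighted energy: among all $\Omega$ of unit area, the centred disk should minimize it.

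The hypothesis $B \ge 2\pi n$ now acquires a transparent meaning. The orbital $w = z^k$ produces a density $|u|^2 = r^{2k} e^{-Br^2/2}$ concentrated on the circle of radius $\sqrt{2k/B}$, which is contained in a disk of unit area precisely when $k \lesssim B/(2\pi) = \phi$; hence for $B \ge 2\pi n$ the first $n$ orbitals fit inside the unit-area disk, and one expects (and would prove via an eigenvalue count below the next Landau level $3B$, in the spirit of Corollary~\ref{cor:magnetic_polya}) that the first $n$ eigenfunctions are genuinely Landau-level-like. The core of the argument is then a rearrangement estimate for the min-max quotient: given any $n$-dimensional trial space on $\Omega$, I would transplant it to the centred disk $D$ of equal area so that the weighted norms are preserved while the weighted $\bar\partial$-energy does not increase, using that the Gaussian weight $e^{-B|z|^2/2}$ is a strictly decreasing radial function and is therefore most efficiently placed on a disk about the origin, and that the holomorphic grading $\{1, z, \dots, z^{n-1}\}$ provides a natural basis adapted to the rotational symmetry of $D$. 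The matching upper bound $\mu_n(D,B)$ is comparatively routine, being available from the separation-of-variables transcendental equations for the disk, so the whole difficulty lies in the lower bound.

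I expect the main obstacle to be the passage from one function to an $n$-dimensional subspace. Decreasing rearrangement, the tool behind Erd\H{o}s' $n=1$ result, does not act linearly on subspaces and does not automatically respect the $n-1$ orthogonality constraints encoded in the min-max principle; a structured transplantation compatible with the holomorphic grading, rather than a naive symmetrization of each trial function, will be needed. Compounding this, the identity $\lambda_n = B + \mu_n$ is exact but no eigenfunction is exactly holomorphic --- a nonzero holomorphic $w$ cannot vanish on $\partial\Omega$ --- so the non-Landau-level contribution must be controlled uniformly over the whole range $B \ge 2\pi n$, not only in the asymptotic regime $B \to \infty$ where semiclassical lowest-Landau-level methods in the spirit of Frank~\cite{Frank2009} and of the author and Weidl~\cite{Baur2025} would apply. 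I anticipate that the genuinely hard case is the non-asymptotic one near the threshold $B = 2\pi n$, where the outermost orbital only marginally fits and no small parameter is available.
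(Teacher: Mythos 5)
The statement you are addressing is not proved in the paper at all: it is stated as a conjecture, supported only by the numerical shape-optimization experiments (the observed return of all minimizers of $\lambda_2,\dots,\lambda_7$ to a single disk once $B\ge 2\pi n$) together with Erd\H{o}s' theorem for $n=1$. There is therefore no proof in the paper to compare against, and your proposal has to stand on its own as a claimed proof. It does not: it is a research program with the decisive step missing. The reduction to the Gaussian-weighted $\bar\partial$-energy via $u=e^{-B|z|^2/4}w$ and the shift $\lambda_n=B+\mu_n$ are correct and standard (they are the starting point of Erd\H{o}s' $n=1$ argument), but the sentence ``I would transplant it to the centred disk $D$ of equal area so that the weighted norms are preserved while the weighted $\bar\partial$-energy does not increase'' is precisely the content of the conjecture, asserted rather than established. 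No rearrangement or transplantation acting on $n$-dimensional trial subspaces and compatible with the min-max orthogonality constraints is constructed, and you acknowledge this yourself. Moreover, since the conjectured statement fails for $B$ slightly below $2\pi n$ (the paper's numerics exhibit flower-shaped or disconnected minimizers there, and at $B=0$ Berger's theorem excludes the disk for every $n\ge 5$), any such transplantation must necessarily break down below the threshold, so the hypothesis $B\ge 2\pi n$ has to enter the lower bound in an essential, quantitative way --- which your sketch does not achieve.

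Two further concrete problems. First, your heuristic for the threshold is off by one: the orbitals $w=z^k$ with $k=0,\dots,n-1$ all have their density peak radius $\sqrt{2k/B}$ inside the unit-area disk as soon as $\phi\ge n-1$, not $\phi\ge n$, so the ``first $n$ orbitals fit'' mechanism does not by itself single out the conjectured critical flux. Second, the ``comparatively routine'' upper bound on the disk is not innocuous near the threshold: the appendix of the paper shows that at exactly $B=2\pi n$ the single disk ties with $n$ equal disks (and, for $n=6$, with several other configurations), i.e.\ the optimization problem is degenerate there, so identifying which angular-momentum branch realizes $\lambda_n(D,B)$ and matching the two bounds requires genuine care precisely in the regime you identify as hardest. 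In short, the statement remains open; your proposal is a well-motivated plan of attack consistent with the lowest-Landau-level intuition behind the paper's numerics, but it is not a proof.
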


We note that the condition $B\geq 2\pi n$ is equivalent to $\phi \geq n$ where $\phi$ is the magnetic flux through the domain. Thus, the conjecture from above can be equivalently stated in the easy-to-remember form

\begin{center}
''if $\phi \geq n$, then $\lambda_n(\Omega,B)$ is minimized by a single disk.''
\end{center}

We find it remarkable that the shapes of the minimizers of $\lambda_3$ to $\lambda_7$ undergo similar patterns even below the critical field strength of $2\pi$ times the eigenvalue index. We identify the following pattern: For $\lambda_n$, $n\geq 3$, the minimizers first seem to become a single disk for a certain field strength strictly below $2\pi n$. As the field strength increases, they start to look like a flower with $n$ petals, exhibiting a rotational symmetry of degree $n$. The domains then develop sharp cusps when $B$ goes to $2\pi n$ and the minimization routine fails to converge. After the threshold of $B=2\pi n$ is surpassed, the minimizers of $\lambda_n$ suddenly return to a single disk and stay at this domain for any larger field strength.

In the regime where $ \phi < n$, the transition between minimizers seems to be smooth except at certain field strengths. There appears to be a phase transition for $\lambda_5$ at around $B=4.2$, where the minimizer jumps from a domain with rotational symmetry of degree $2$ to a domain\footnote{Various people have pointed out the resemblance of the shape with the head of a certain popular comic mouse character. James B. Kennedy has argued that the shape looks more like a koala head.} with only one symmetry axis, see Figure \ref{fig:lambda5_final_domain}. For $\lambda_6$, we observe one phase transition at $B=4.8$, where the minimizer jumps from a domain with rotational symmetry of degree $3$ to one of degree $2$, see Figure \ref{fig:lambda6_final_domain}. Similarly, for $\lambda_7$, we observe two phase transitions. The first one appears around $B=3.4$, the second one at $B=13.0$, see Figure \ref{fig:lambda7_final_domain}, both changing the symmetry type of the minimizer. Generally, these phase transitions can be recognized in plots of Figure \ref{fig:compcurve_lambda567} as well where there appears to be a corner in the curve representing the numerically attained minimal eigenvalue $\lambda_n$.

Contrary to that, it appears that the symmetry type of the minimizers can also change via a smooth transition by passing through the disk. This appears for $\lambda_5$ around $B=22.0$, for $\lambda_6$ around $B=30.0$ and for $\lambda_7$ around $B=10.0$ and $B=37.0$.

We interpret this behaviour of the minimizers as a manifestation of multiple branches of local minimizers. For higher eigenvalues, we observed that it is not unusual that the numerical optimization scheme finds multiple simply connected, local minimizers for fixed field strength $B$, when starting from different initial domains. The domains we present are gained by taking the minimum over all local minima. The local minima seem to form branches over which the minimizing domains transform smoothly with $B$. As the field strengths varies, it occurs that some branches fall below others and become the current global minimizer. A phase transition happens when the eigenvalue $\lambda_n$ of one local minimizer branch undercuts the same eigenvalue of another local minimizer branch. On the other hand, when a minimizer branch passes through the disk, it can lead to a change of the symmetry type of the global minimizer without sudden phase transition. 

Concerning connectedness of minimizers, we observe a rather puzzling phenomenon for $\lambda_5$. In Figure \ref{fig:compcurve_lambda567}, it appears that for $B=13.6$ to $16.4$, the numerically obtained simply connected minimizers have a larger eigenvalue $\lambda_5$ than what can be achieved by a disjoint union of previous minimizers (green line). The numerically obtained simply connected minimizers are in this case beaten by a disjoint union of scaled minimizers of $\lambda_4$ and $\lambda_1$.  This would mean that in the above mentioned range the global minimizer suddenly changes from being connected to being disconnected and being connected again. The same phenomenon cannot be observed for $\lambda_6$ or $\lambda_7$ where the disconnected domains always yield larger eigenvalues than the numerically obtained simply connected minimizers. 

This phenomenon begs the question: Why did we not observe that the minimization scheme tries to approach a disconnected domain consisting of scaled minimizers of $\lambda_4$ and $\lambda_1$, producing a domain with sharp cusps? As it turns out, our initial domains were too close to a local minimizer and the gradient descent scheme was trapped in the local well. Handpicking an initial domain with having already some inward dents, see Figure \ref{fig:lambda5_disc_rerun_domains} (left), and performing the minimization procedure for $B=16.2$, we observe that the minimization procedure indeed produces a domain with sharp cusps that looks as if the scaled minimizers of $\lambda_4$ and $\lambda_1$ were stuck together, see Figure \ref{fig:lambda5_disc_rerun_domains} (middle). The final domain exhibits a fifth eigenvalue ($\lambda_5 \approx 81.41$) which is smaller than that of the connected local minimizer we found ($\lambda_5 \approx 81.69$) but slightly larger than that of the optimal disconnected domain consisting of scaled minimizers of $\lambda_4$ and $\lambda_1$ ($\lambda_5 \approx 81.27$). This makes it very plausible that the global minimizers of $\lambda_5$ indeed become suddenly disconnected for a short range of $B$.

\begin{figure}[t]
\begin{overpic}[trim={4cm 3cm 4cm 2.5cm},clip,scale=0.35]{./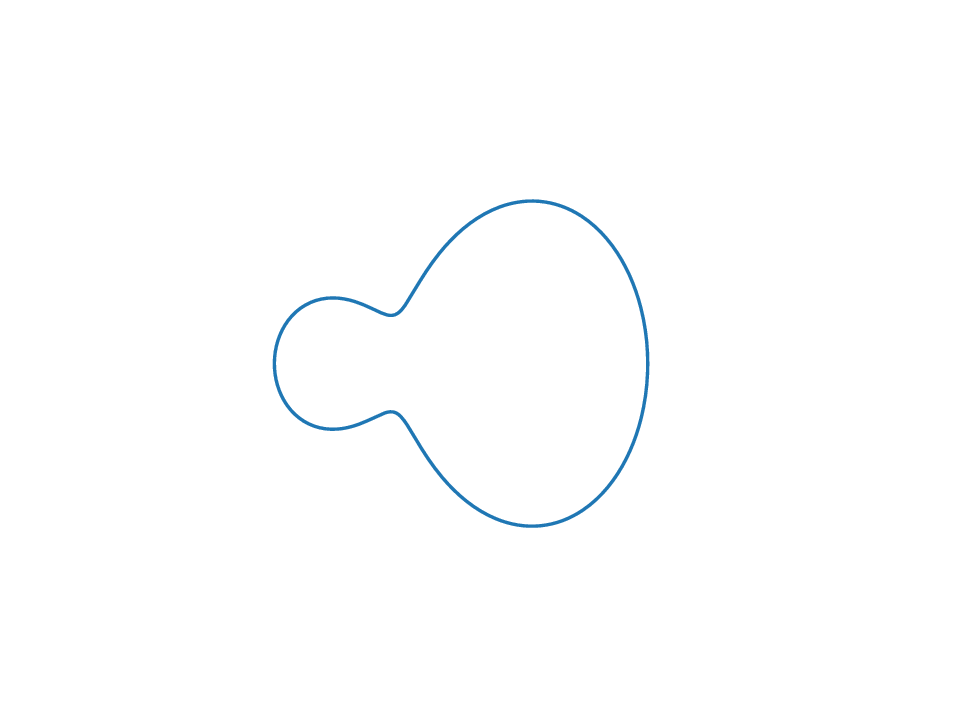}
 \put (32,-15) {initial }
 \put (0,-32) {$\lambda_5(\Omega, B) \approx 88.89$	}
\end{overpic}
\hspace{0.5cm}
\begin{overpic}[trim={4cm 3cm 4cm 2.5cm},clip,scale=0.35]{./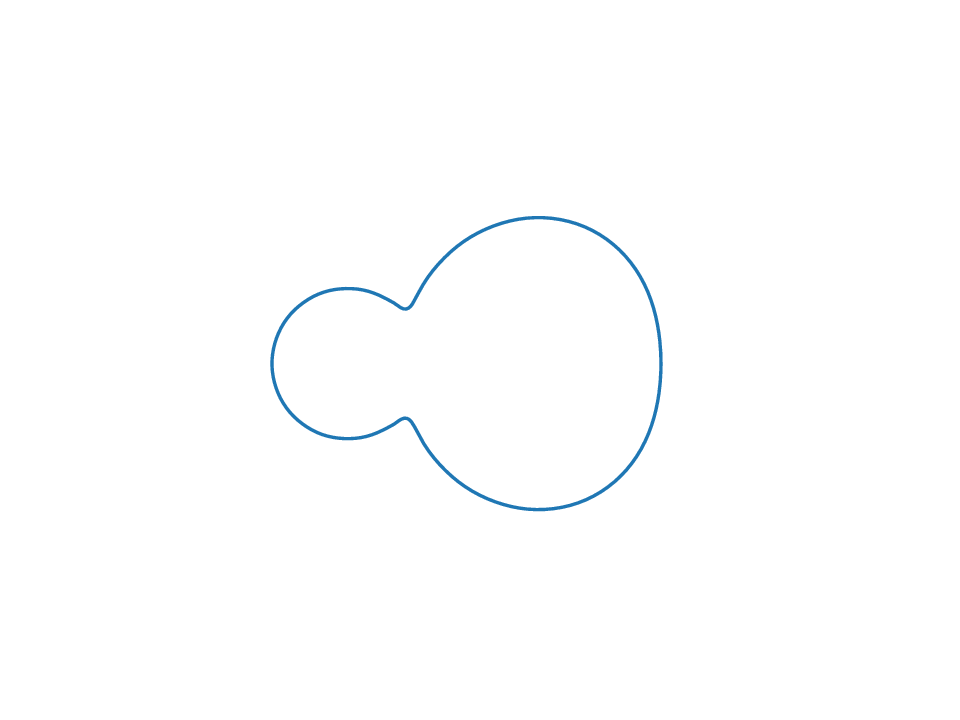}
 \put (38,-15) {final}
 \put (2,-32) {$\lambda_5(\Omega, B) \approx 81.41$	}
\end{overpic}
\hspace{0.5cm}
\begin{overpic}[trim={4cm 3cm 4cm 2.5cm},clip,scale=0.35]{./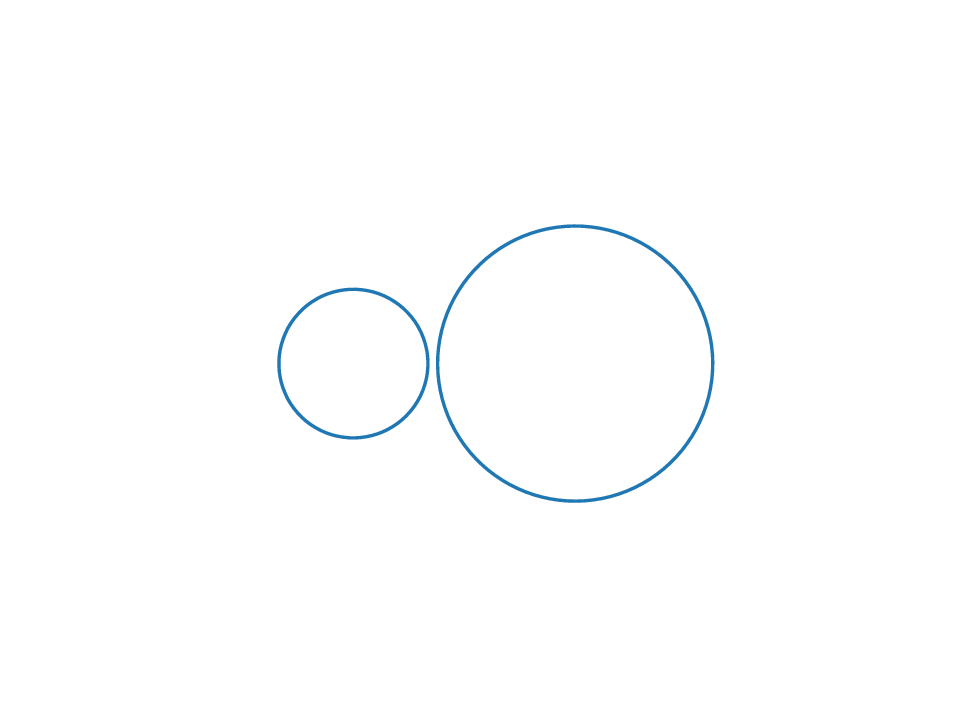}
 \put (35,-15) {disjoint}
 \put (7,-32) {$\lambda_5(\Omega, B) \approx 81.27$	}
\end{overpic}
\vspace{1.0cm}

\caption{Comparison of hand-picked initial domain with cusps with its final domain after minimization and optimal disjoint domain for $\lambda_5(\Omega, B)$ where $B=16.2$.} \label{fig:lambda5_disc_rerun_domains}
\end{figure}

\begin{figure}[p]
\begin{overpic}[trim={4cm 3cm 4cm 2.5cm},clip,scale=0.35]{./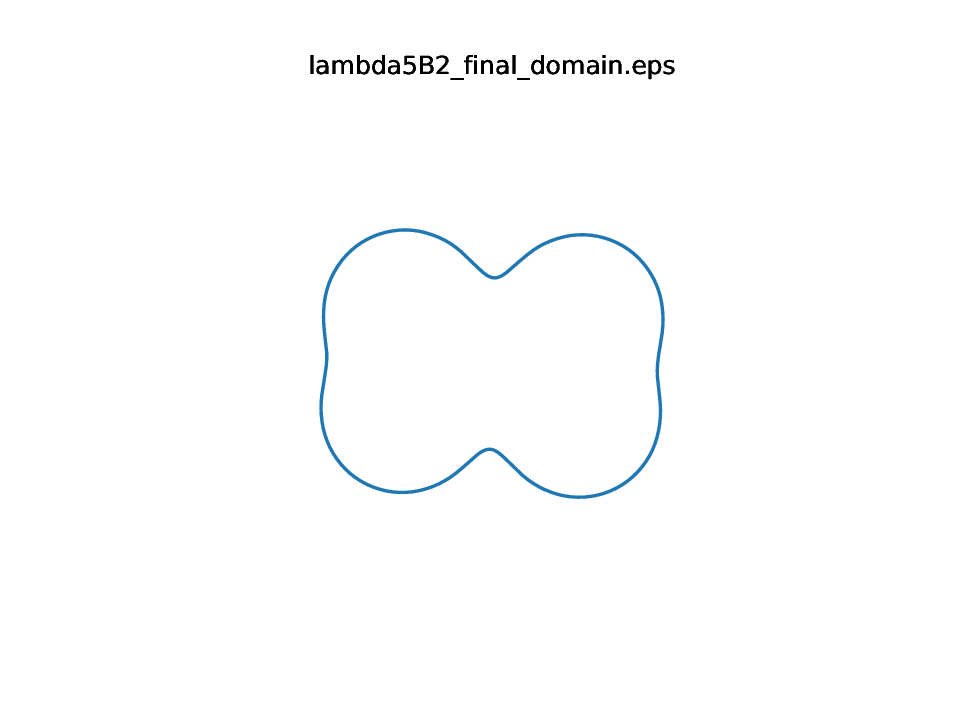}
 \put (32,-15) {$B=2.0$}
\end{overpic}
\begin{overpic}[trim={4cm 3cm 4cm 2.5cm},clip,scale=0.35]{./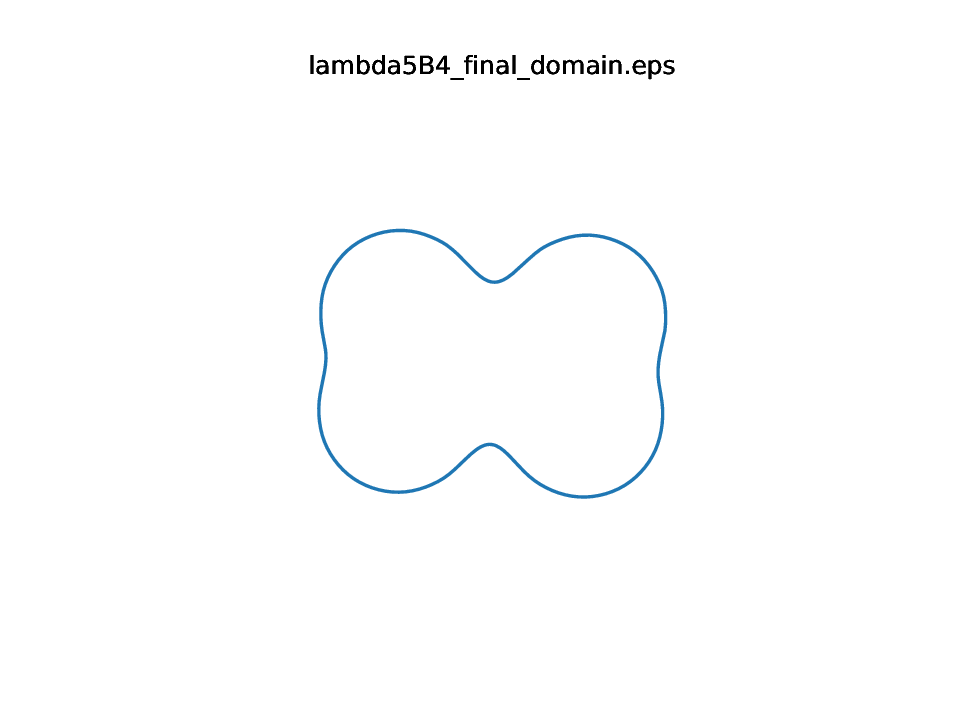}
 \put (32,-15) {$B=4.0$}
\end{overpic}
\begin{overpic}[trim={4cm 3cm 4cm 2.5cm},clip,scale=0.35]{./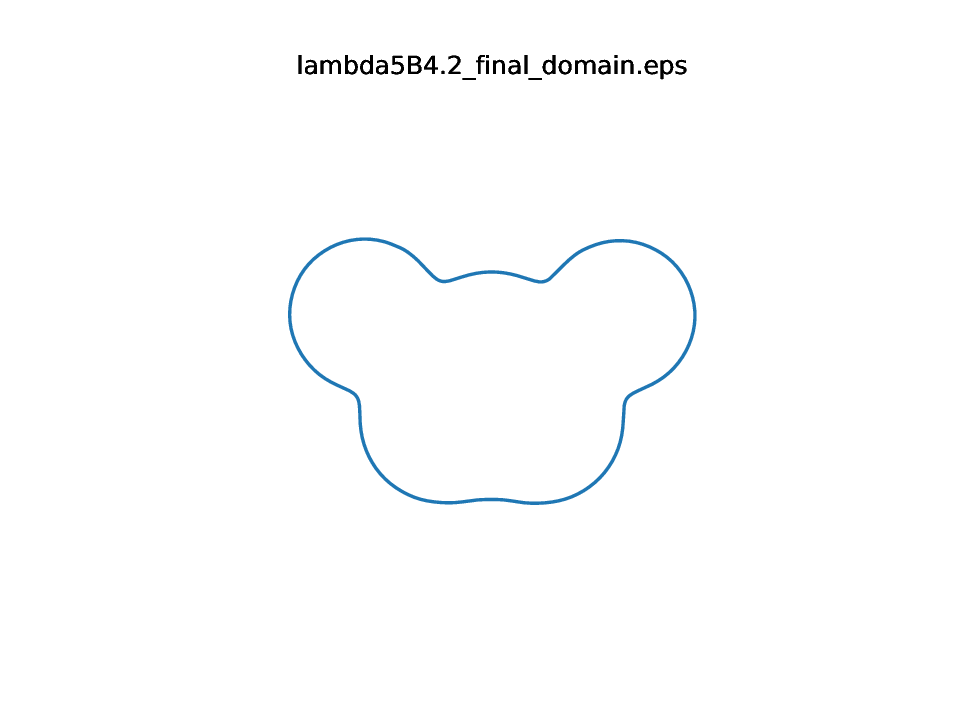}
 \put (32,-15) {$B=4.2$}
\end{overpic}
\begin{overpic}[trim={4cm 3cm 4cm 2.5cm},clip,scale=0.35]{./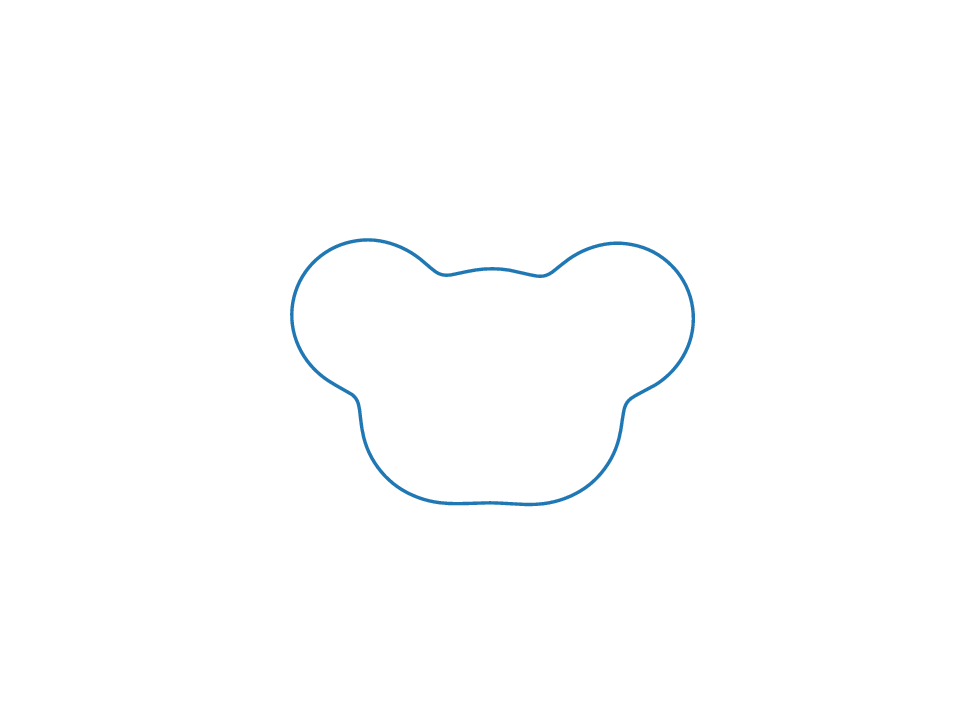}
 \put (32,-15) {$B=10.0$}
\end{overpic}
\begin{overpic}[trim={4cm 3cm 4cm 2.5cm},clip,scale=0.35]{./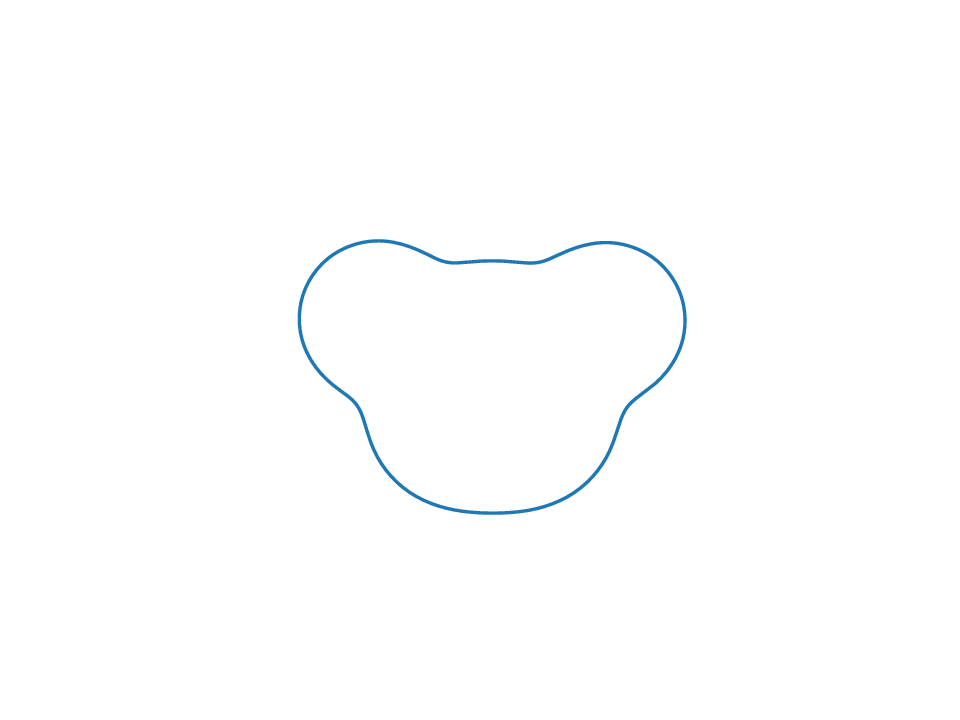}
 \put (32,-15) {$B=15.0$}
\end{overpic}
\vspace{1cm}

\begin{overpic}[trim={4cm 3cm 4cm 2.5cm},clip,scale=0.35]{./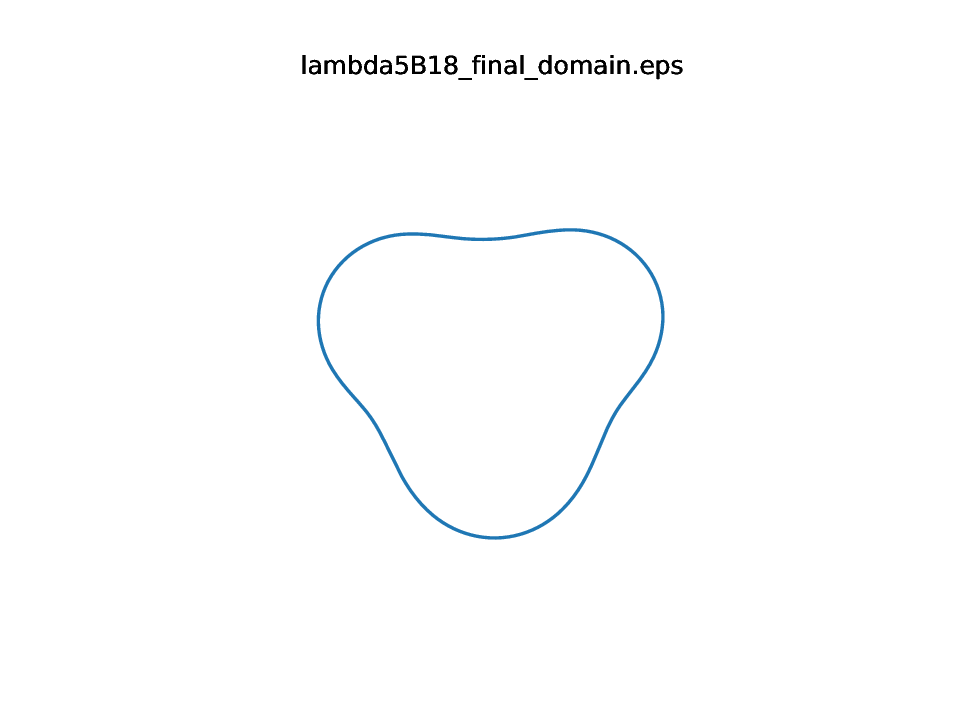}
 \put (30,-15) {$B=18.0$}
\end{overpic}
\begin{overpic}[trim={4cm 3cm 4cm 2.5cm},clip,scale=0.35]{./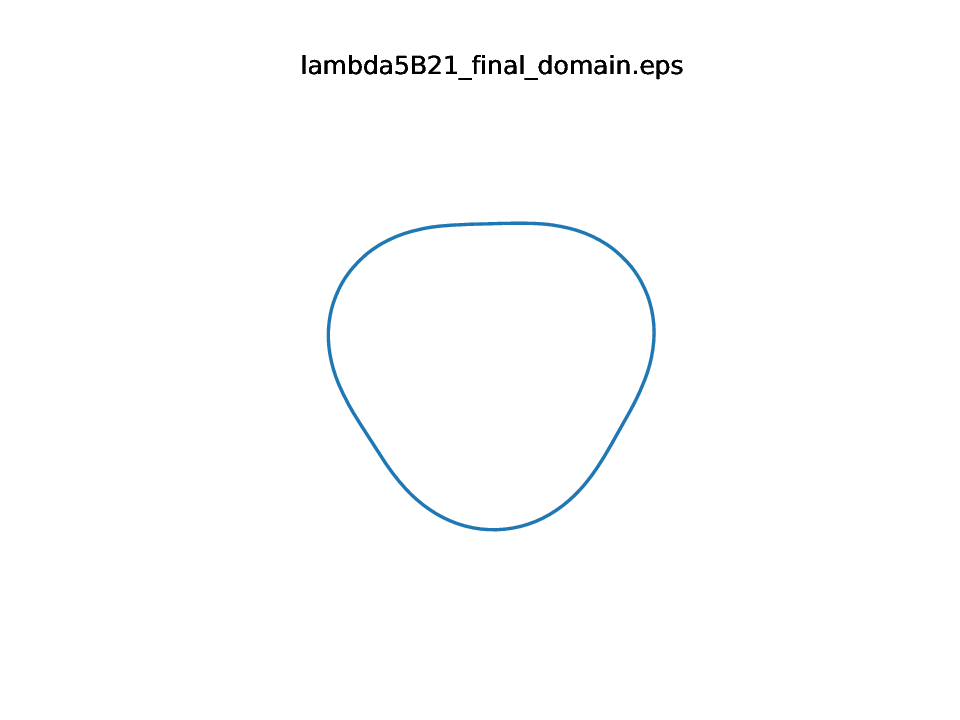}
 \put (30,-15) {$B=21.0$}
\end{overpic}
\begin{overpic}[trim={4cm 3cm 4cm 2.5cm},clip,scale=0.35]{./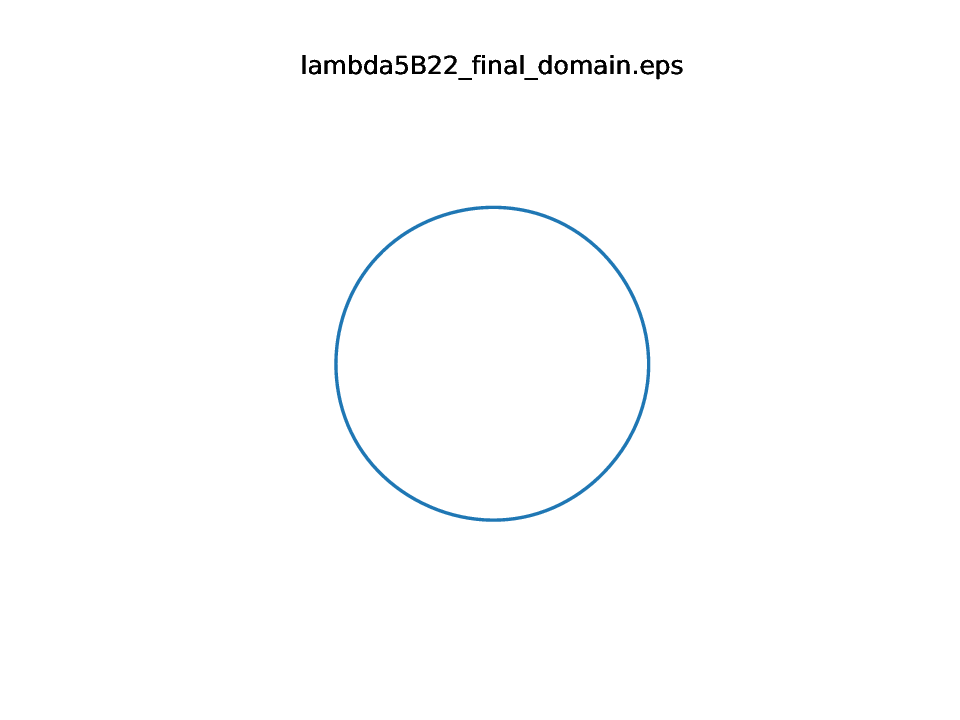}
 \put (30,-15) {$B=22.0$}
\end{overpic}
\begin{overpic}[trim={4cm 3cm 4cm 2.5cm},clip,scale=0.35]{./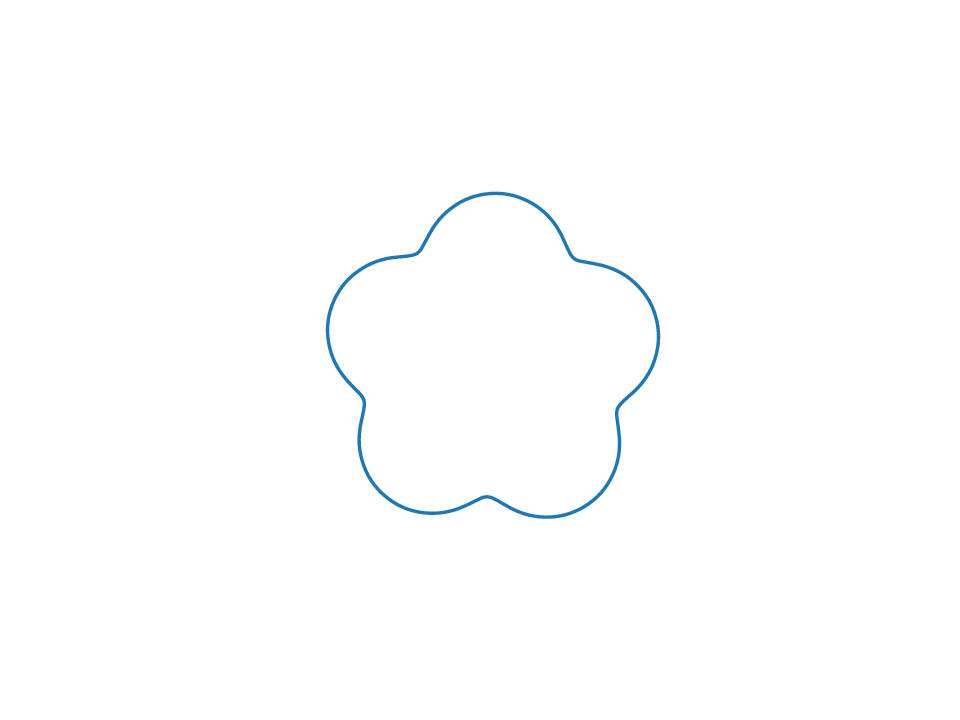}
 \put (30,-15) {$B=25.0$}
\end{overpic}
\begin{overpic}[trim={4cm 3cm 4cm 2.5cm},clip,scale=0.35]{./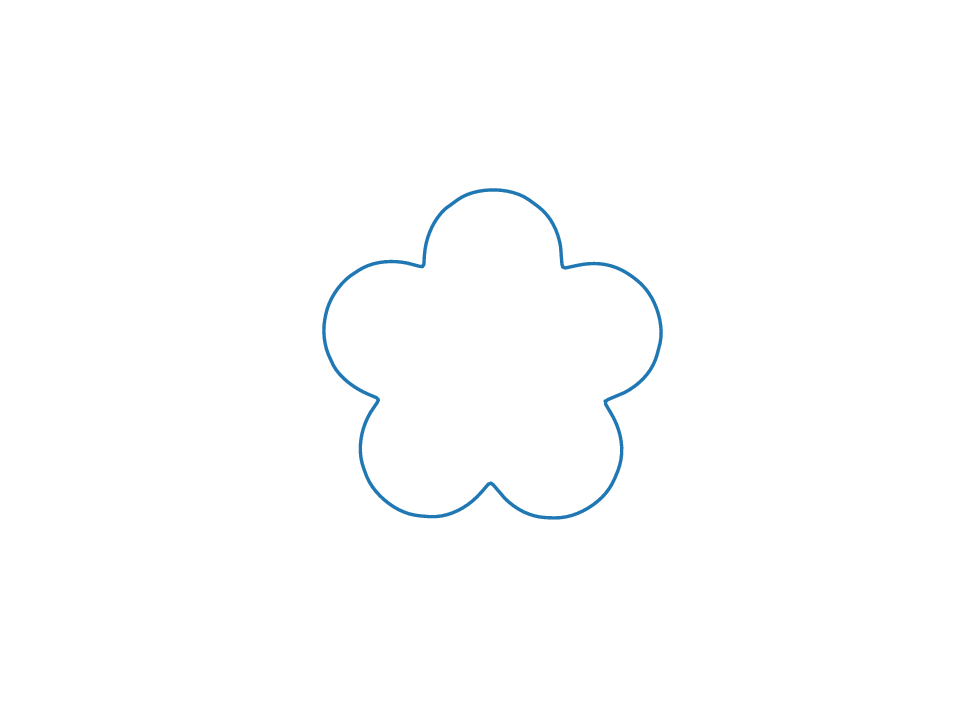}
 \put (30,-15) {$B=28.0$}
\end{overpic}
\vspace{1cm}
\caption{Final domains from minimization of $\lambda_5(\Omega, B)$ for various $B$.} \label{fig:lambda5_final_domain}
\end{figure}

\begin{figure}[h]
\begin{overpic}[trim={4cm 3cm 4cm 2.5cm},clip,scale=0.35]{./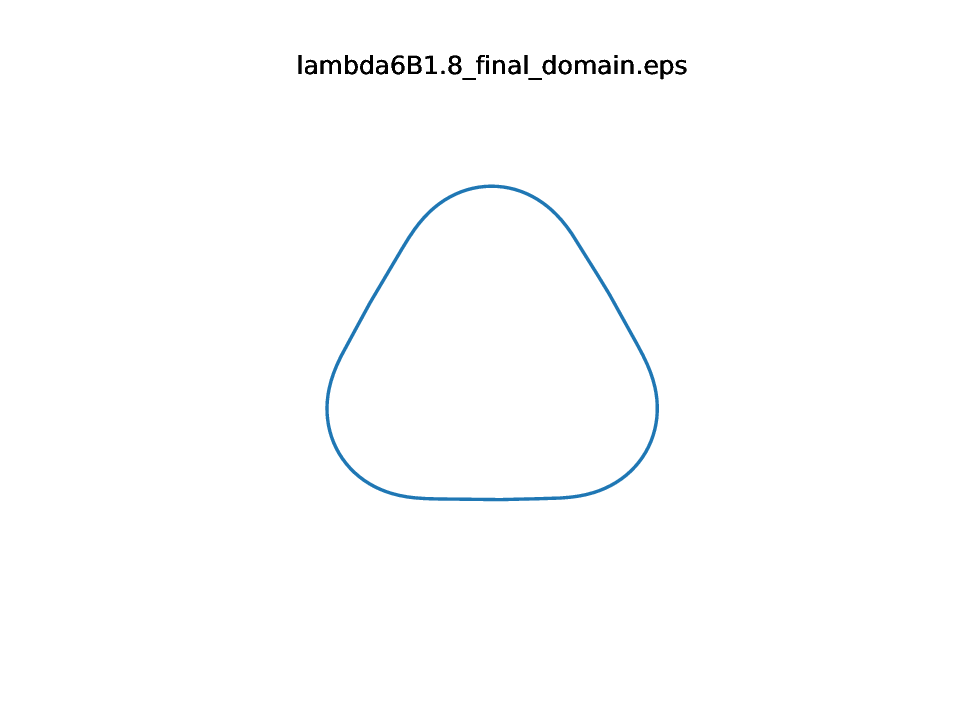}
 \put (32,-15) {$B=1.8$}
\end{overpic}
\begin{overpic}[trim={4cm 3cm 4cm 2.5cm},clip,scale=0.35]{./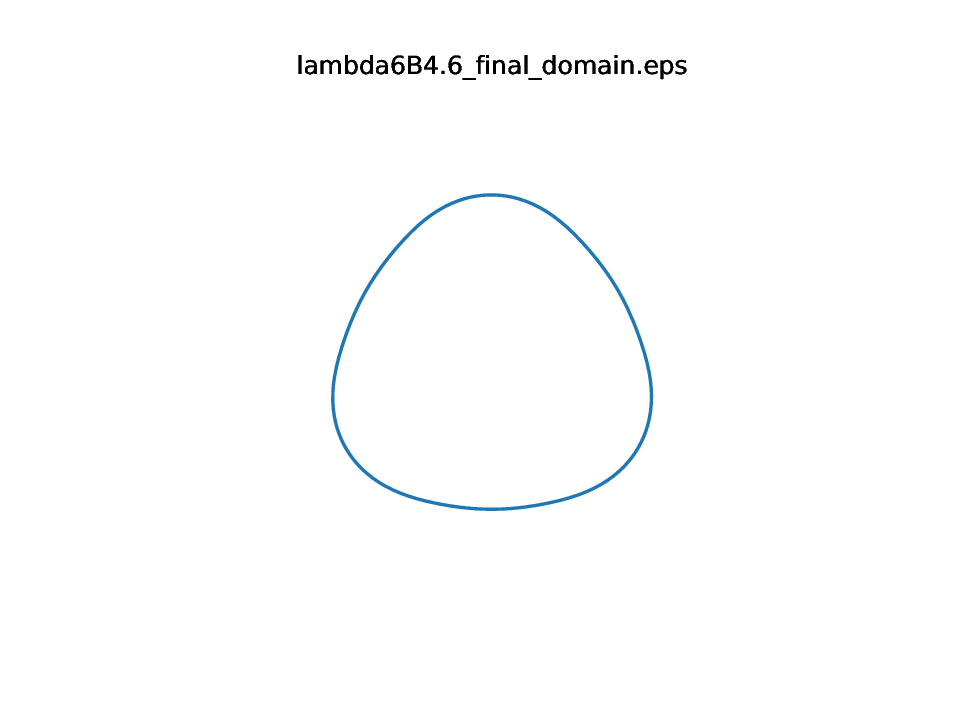}
 \put (32,-15) {$B=4.6$}
\end{overpic}
\begin{overpic}[trim={4cm 3cm 4cm 2.5cm},clip,scale=0.35]{./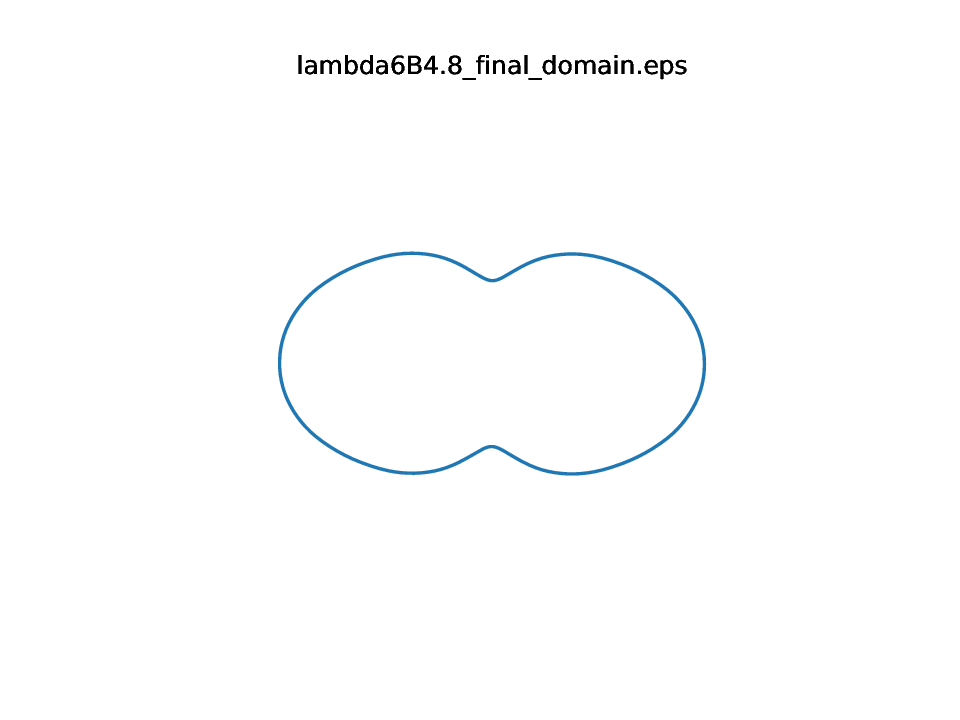}
 \put (32,-15) {$B=4.8$}
\end{overpic}
\begin{overpic}[trim={4cm 3cm 4cm 2.5cm},clip,scale=0.35]{./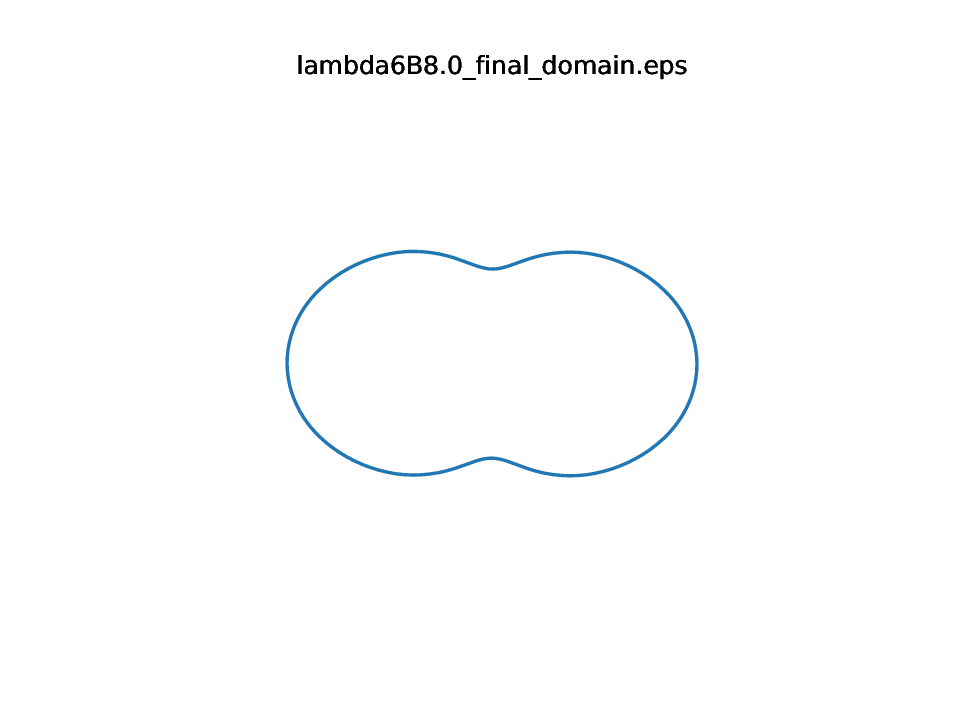}
 \put (32,-15) {$B=8.0$}
\end{overpic}
\begin{overpic}[trim={4cm 3cm 4cm 2.5cm},clip,scale=0.35]{./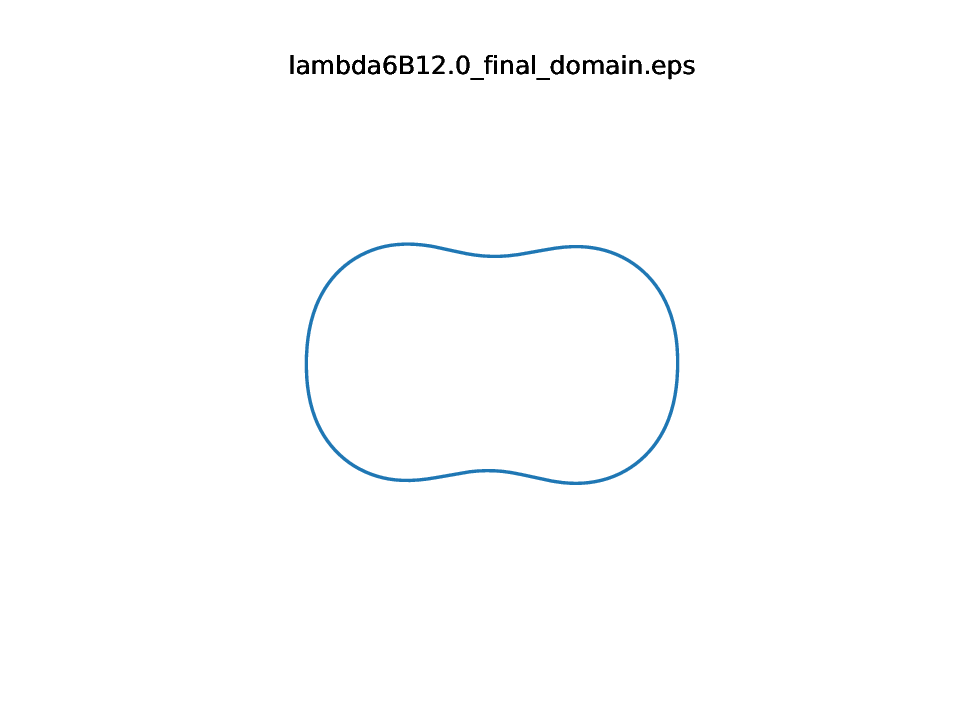}
 \put (32,-15) {$B=12.0$}
\end{overpic}
\vspace{1cm}

\begin{overpic}[trim={4cm 3cm 4cm 2.5cm},clip,scale=0.35]{./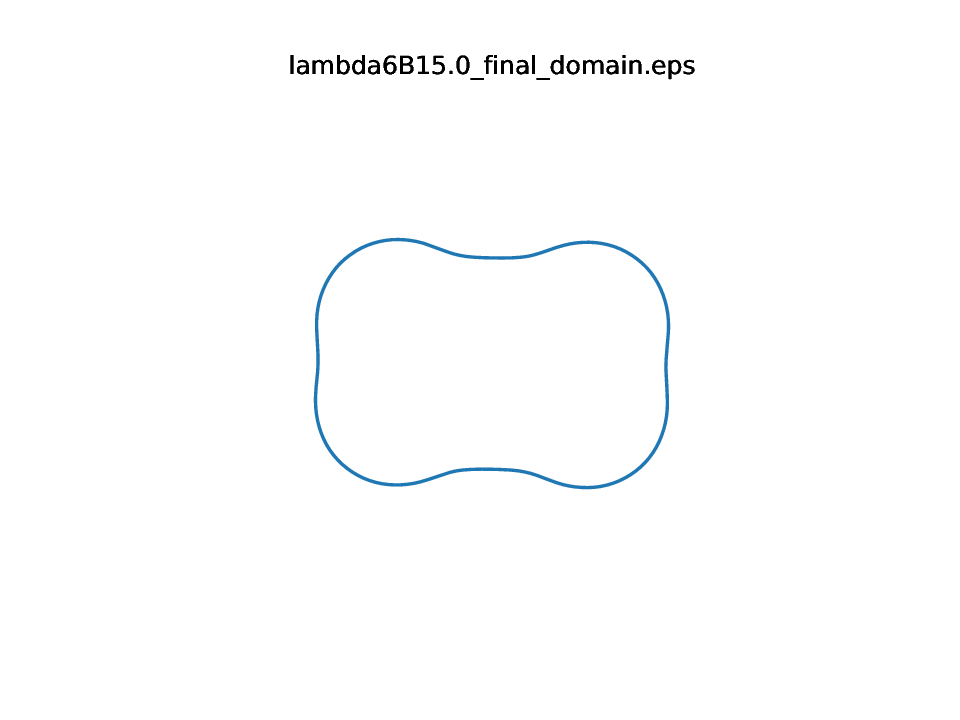}
 \put (32,-15) {$B=15.0$}
\end{overpic}
\begin{overpic}[trim={4cm 3cm 4cm 2.5cm},clip,scale=0.35]{./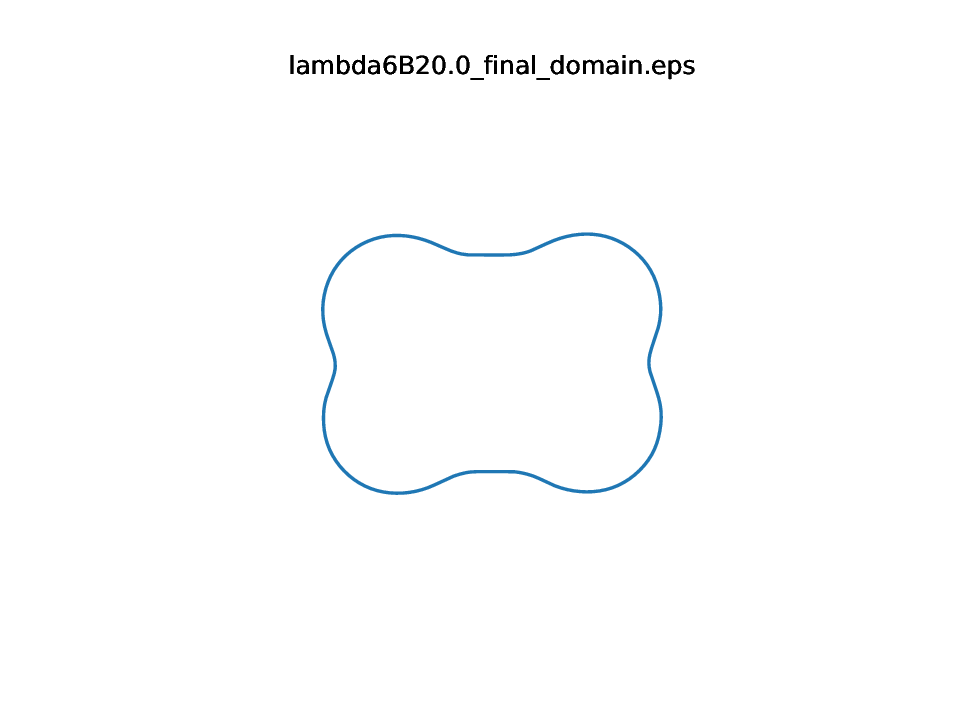}
 \put (30,-15) {$B=20.0$}
\end{overpic}
\begin{overpic}[trim={4cm 3cm 4cm 2.5cm},clip,scale=0.35]{./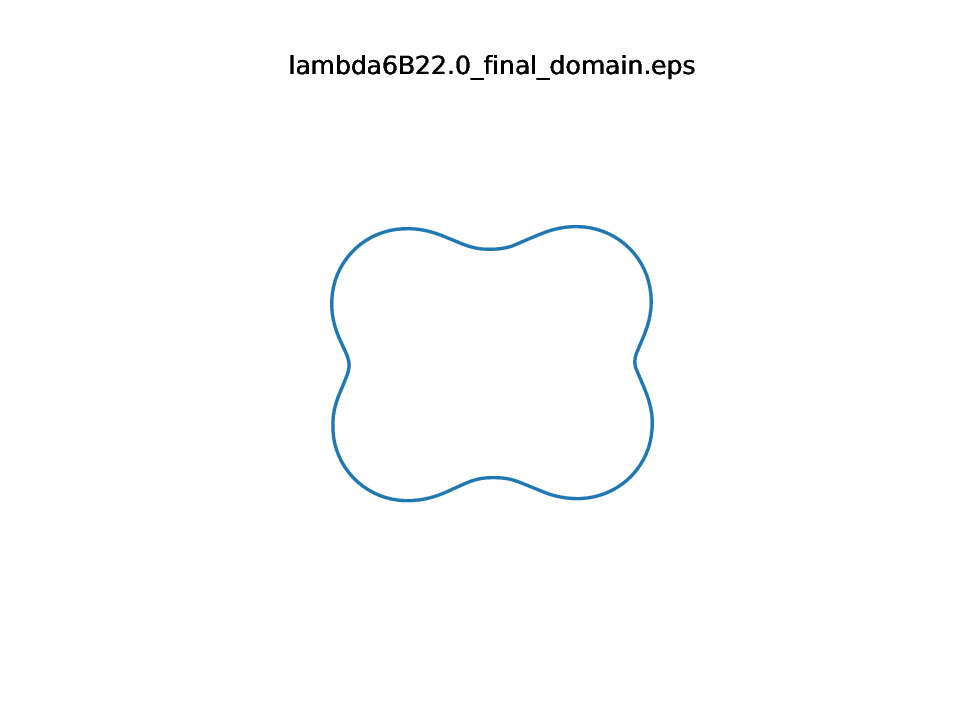}
 \put (30,-15) {$B=22.0$}
\end{overpic}
\begin{overpic}[trim={4cm 3cm 4cm 2.5cm},clip,scale=0.35]{./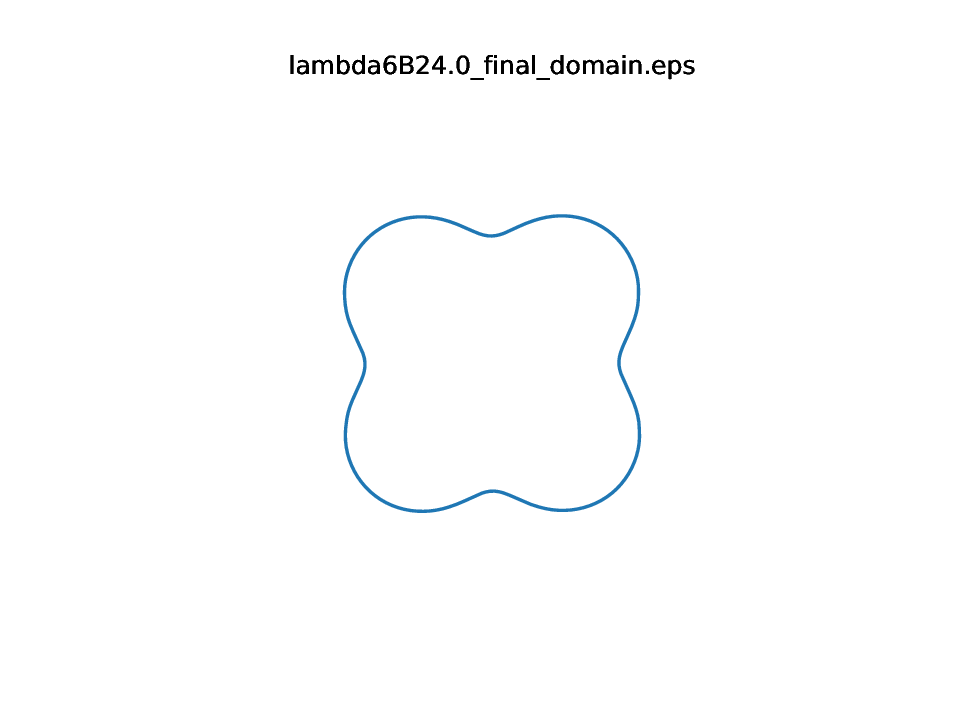}
 \put (30,-15) {$B=24.0$}
\end{overpic}
\begin{overpic}[trim={4cm 3cm 4cm 2.5cm},clip,scale=0.35]{./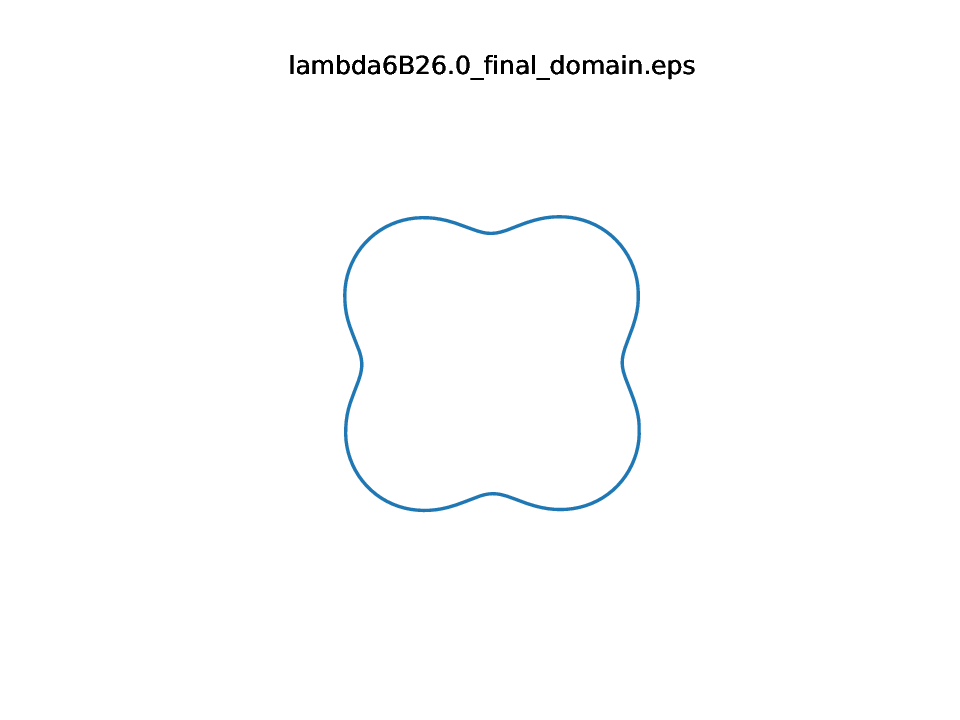}
 \put (30,-15) {$B=26.0$}
\end{overpic}
\vspace{1cm}

\begin{overpic}[trim={4cm 3cm 4cm 2.5cm},clip,scale=0.35]{./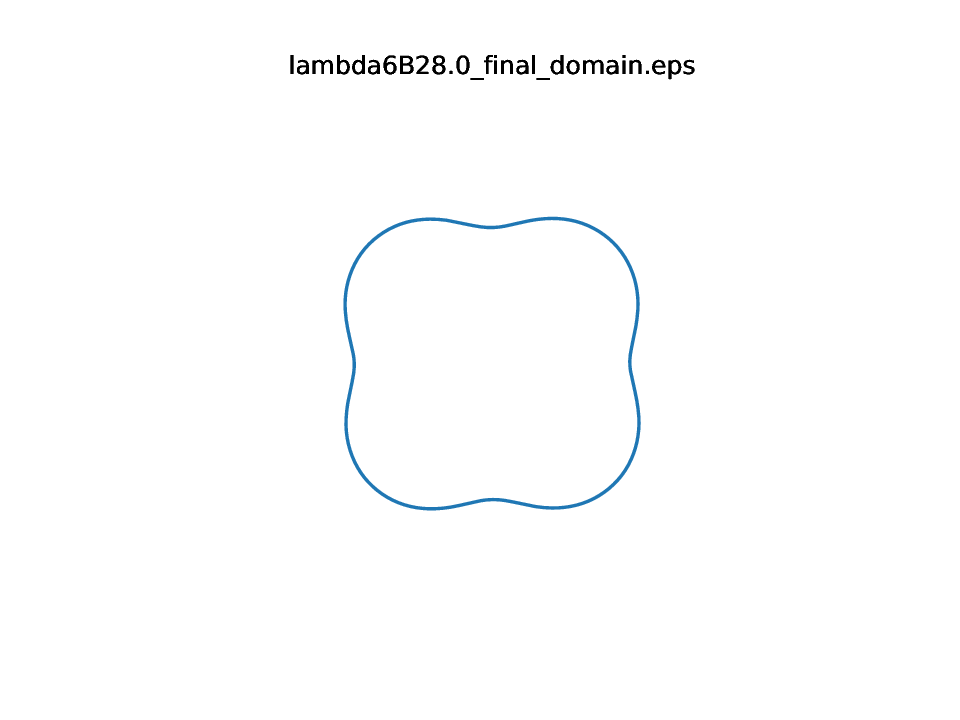}
 \put (30,-15) {$B=28.0$}
\end{overpic}
\begin{overpic}[trim={4cm 3cm 4cm 2.5cm},clip,scale=0.35]{./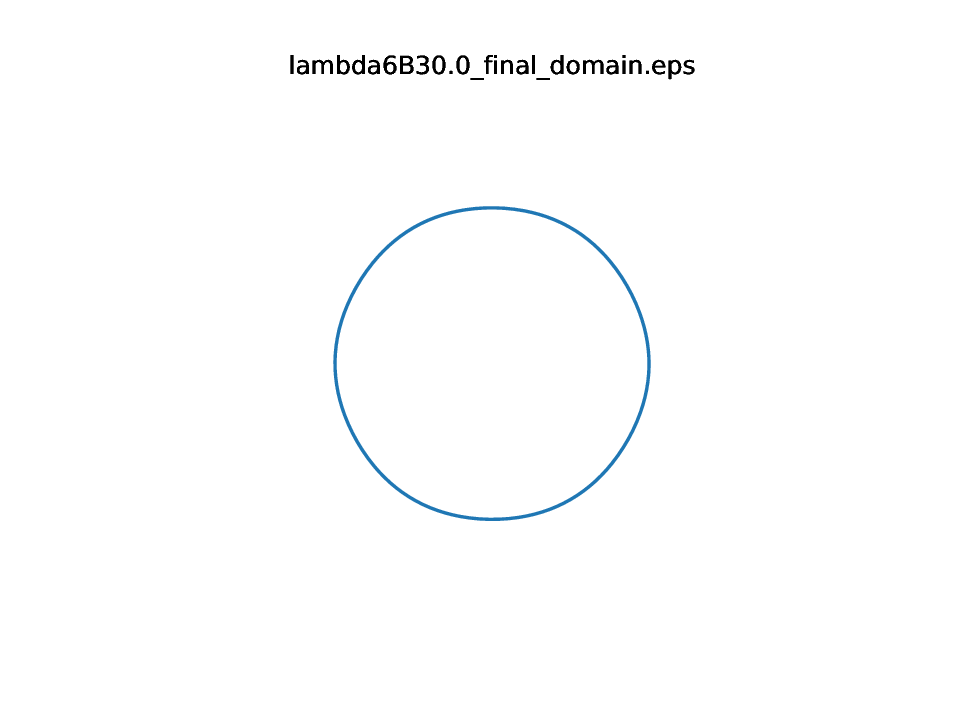}
 \put (30,-15) {$B=30.0$}
\end{overpic}
\begin{overpic}[trim={4cm 3cm 4cm 2.5cm},clip,scale=0.35]{./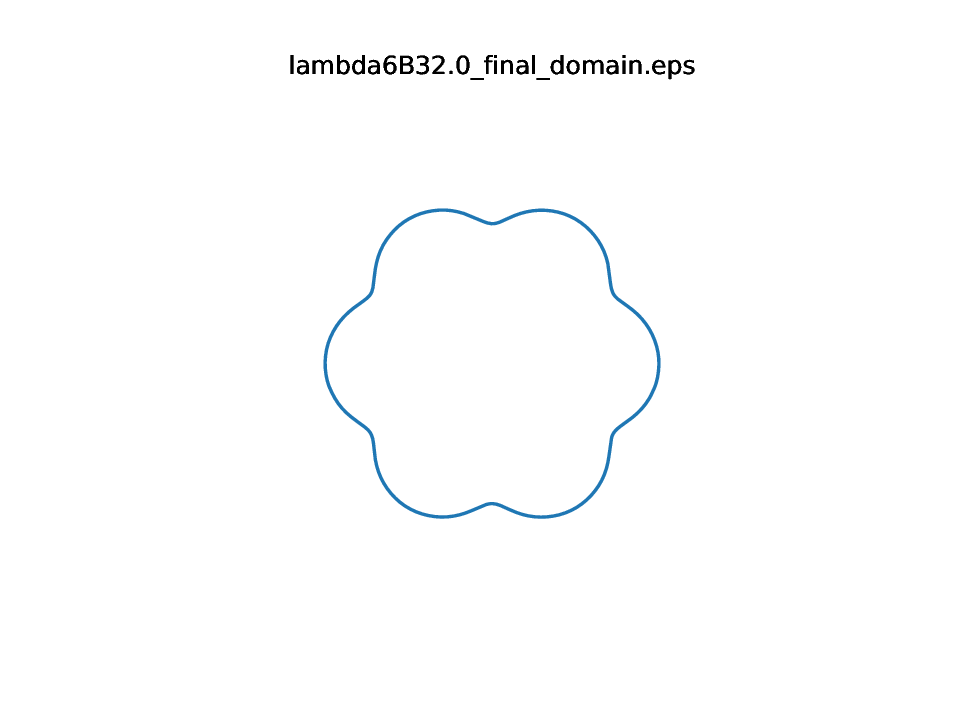}
 \put (30,-15) {$B=32.0$}
\end{overpic}
\begin{overpic}[trim={4cm 3cm 4cm 2.5cm},clip,scale=0.35]{./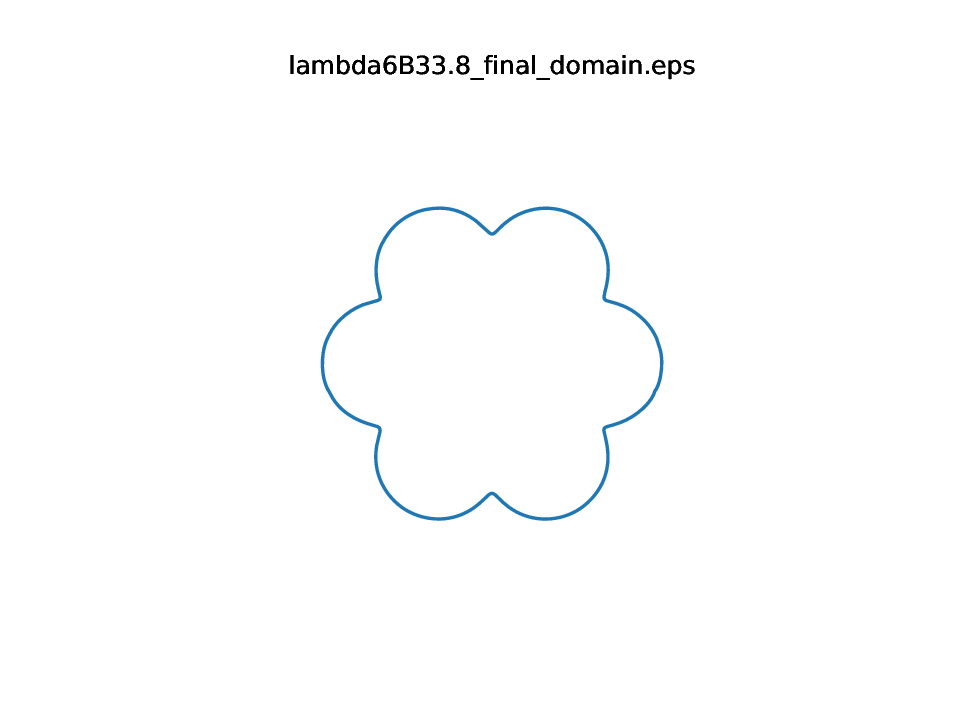}
 \put (30,-15) {$B=33.8$}
\end{overpic}
\vspace{1cm}

\caption{Final domains from minimization of $\lambda_6(\Omega, B)$ for various $B$.} \label{fig:lambda6_final_domain}
\end{figure}

\begin{figure}[h]
\begin{overpic}[trim={4cm 3cm 4cm 2.5cm},clip,scale=0.35]{./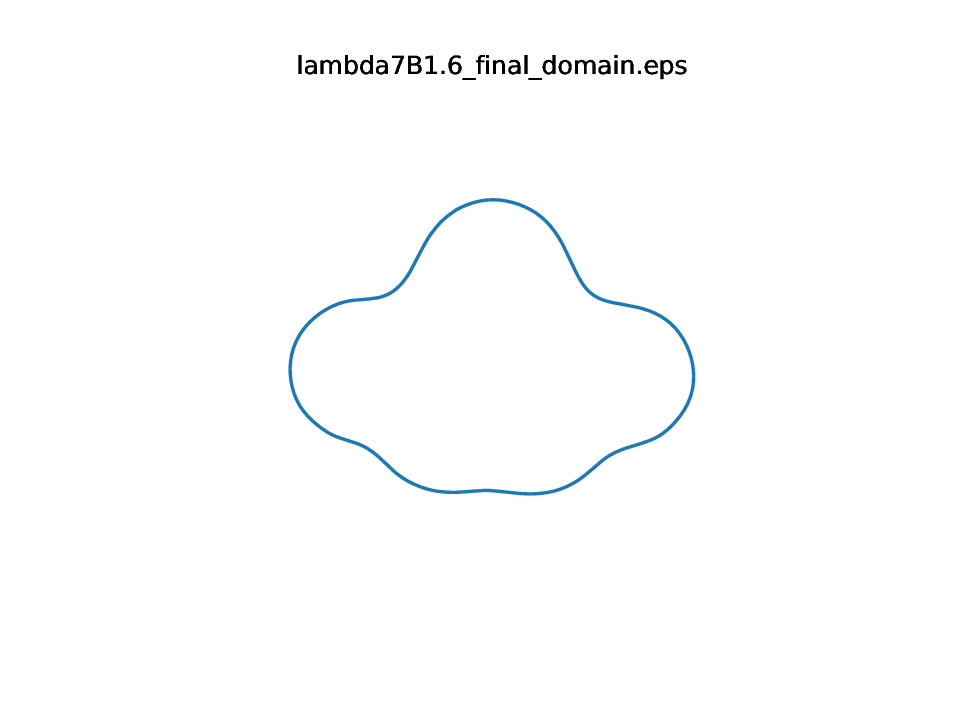}
 \put (32,-15) {$B=1.6$}
\end{overpic}
\begin{overpic}[trim={4cm 3cm 4cm 2.5cm},clip,scale=0.35]{./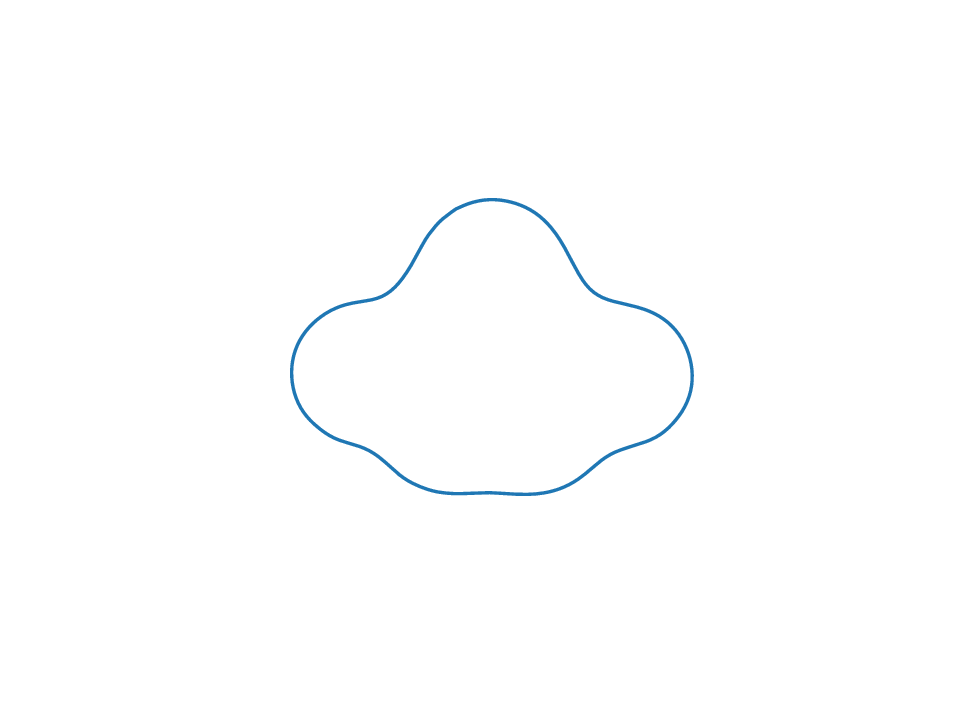}
 \put (32,-15) {$B=3.2$}
\end{overpic}
\begin{overpic}[trim={4cm 3cm 4cm 2.5cm},clip,scale=0.35]{./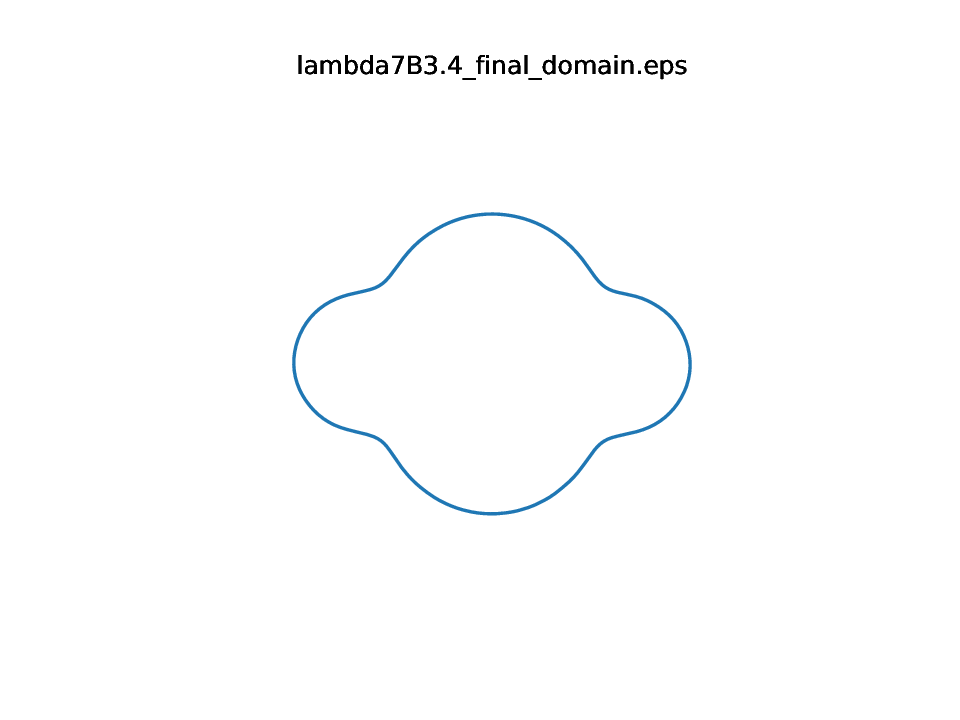}
 \put (32,-15) {$B=3.4$}
\end{overpic}
\begin{overpic}[trim={4cm 3cm 4cm 2.5cm},clip,scale=0.35]{./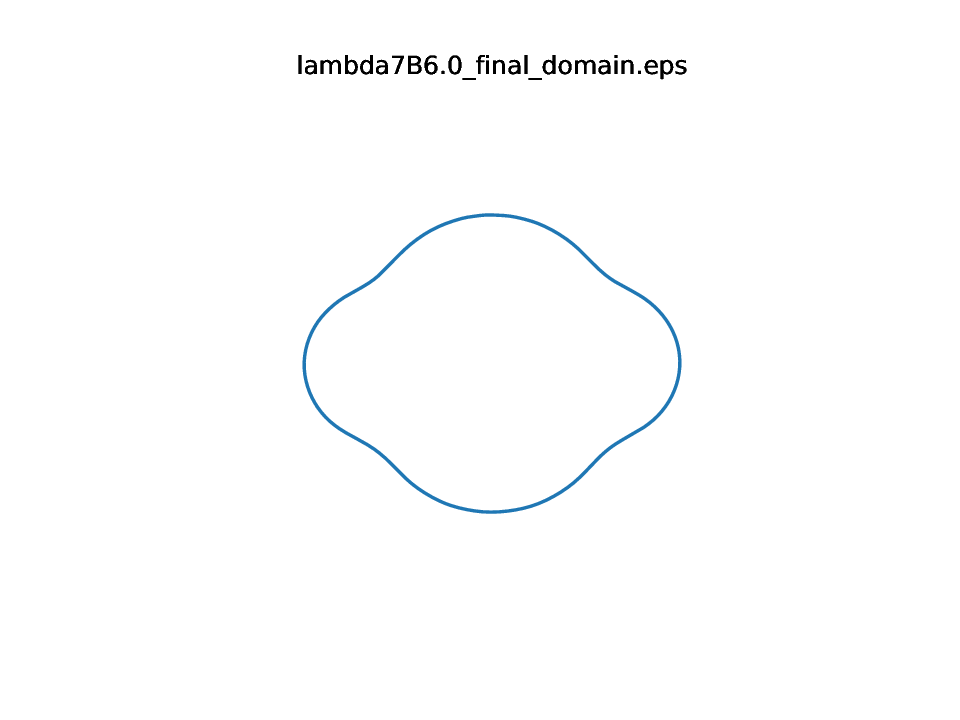}
 \put (32,-15) {$B=6.0$}
\end{overpic}
\begin{overpic}[trim={4cm 3cm 4cm 2.5cm},clip,scale=0.35]{./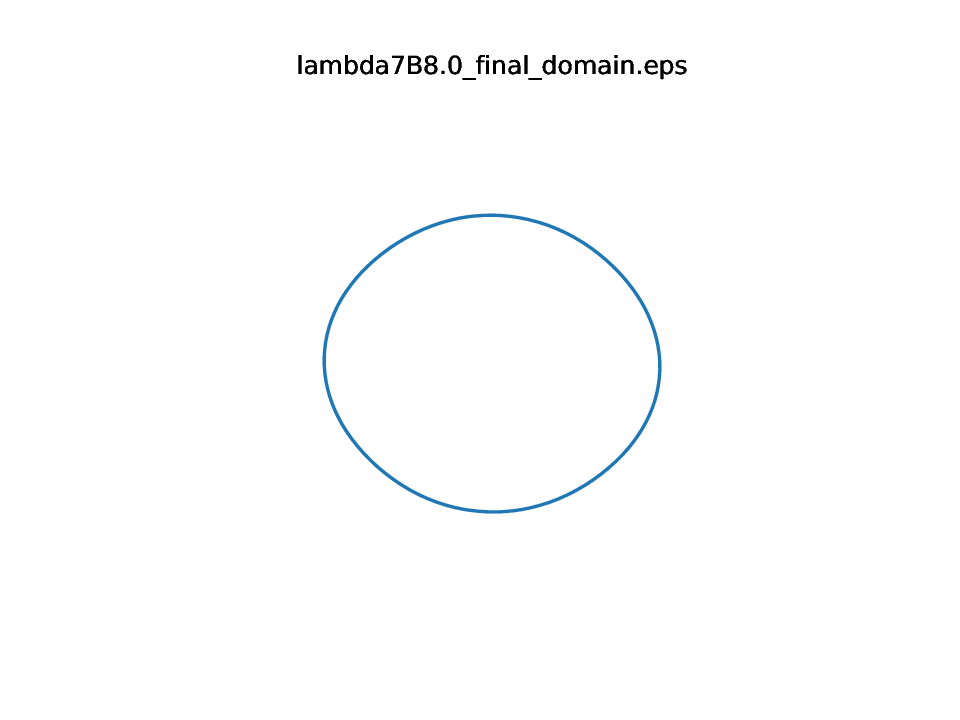}
 \put (32,-15) {$B=8.0$}
\end{overpic}
\vspace{1cm}

\begin{overpic}[trim={4cm 3cm 4cm 2.5cm},clip,scale=0.35]{./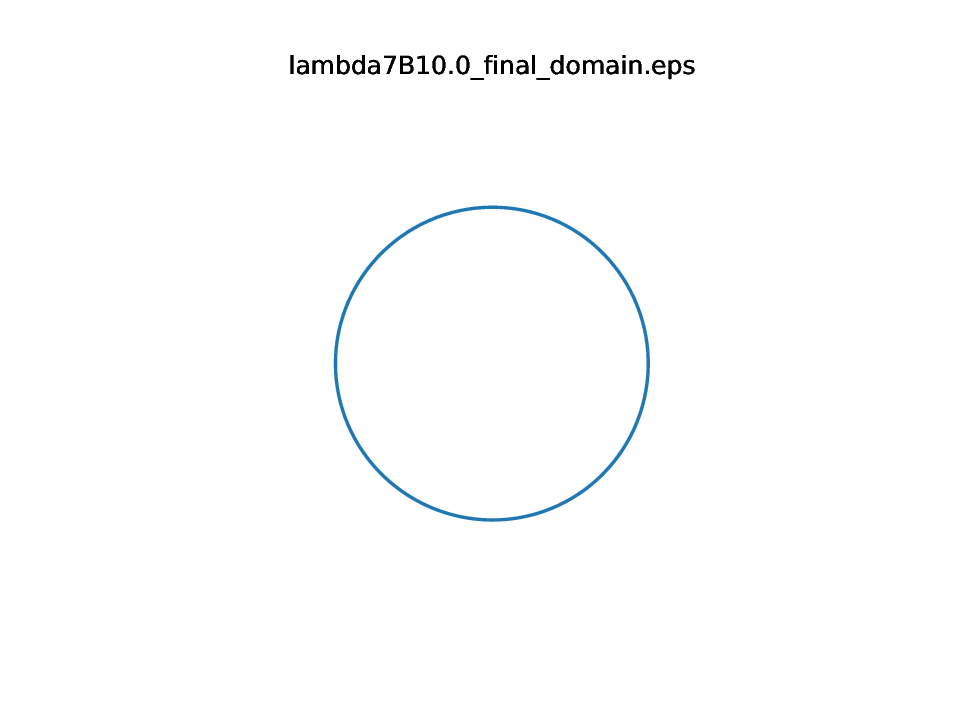}
 \put (30,-15) {$B=10.0$}
\end{overpic}
\begin{overpic}[trim={4cm 3cm 4cm 2.5cm},clip,scale=0.35]{./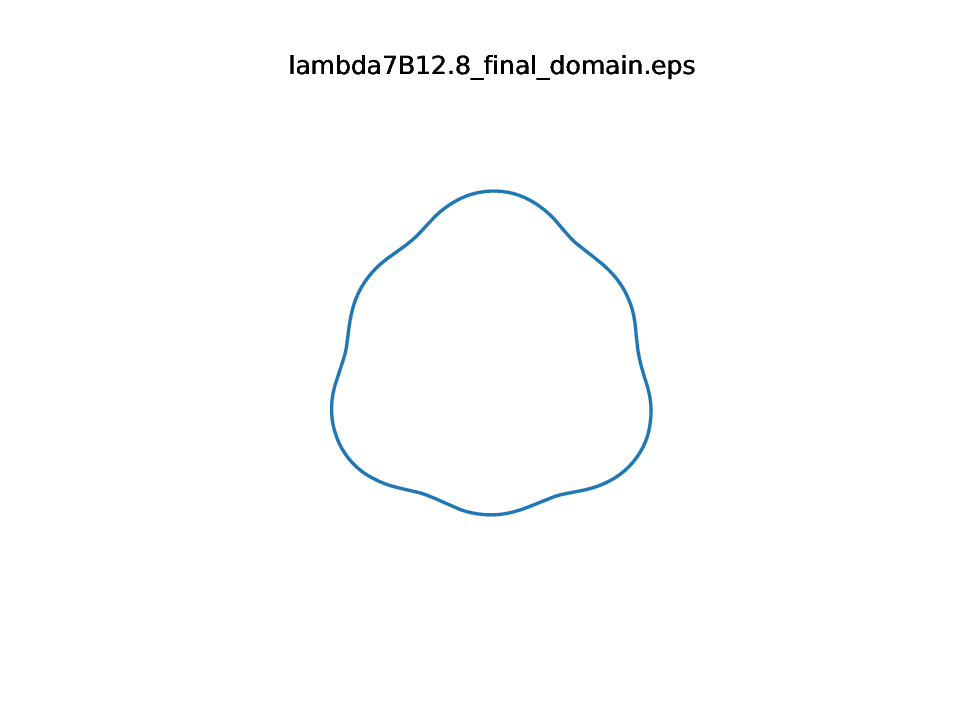}
 \put (30,-15) {$B=12.8$}
\end{overpic}
\begin{overpic}[trim={4cm 3cm 3.5cm 2.5cm},clip,scale=0.35]{./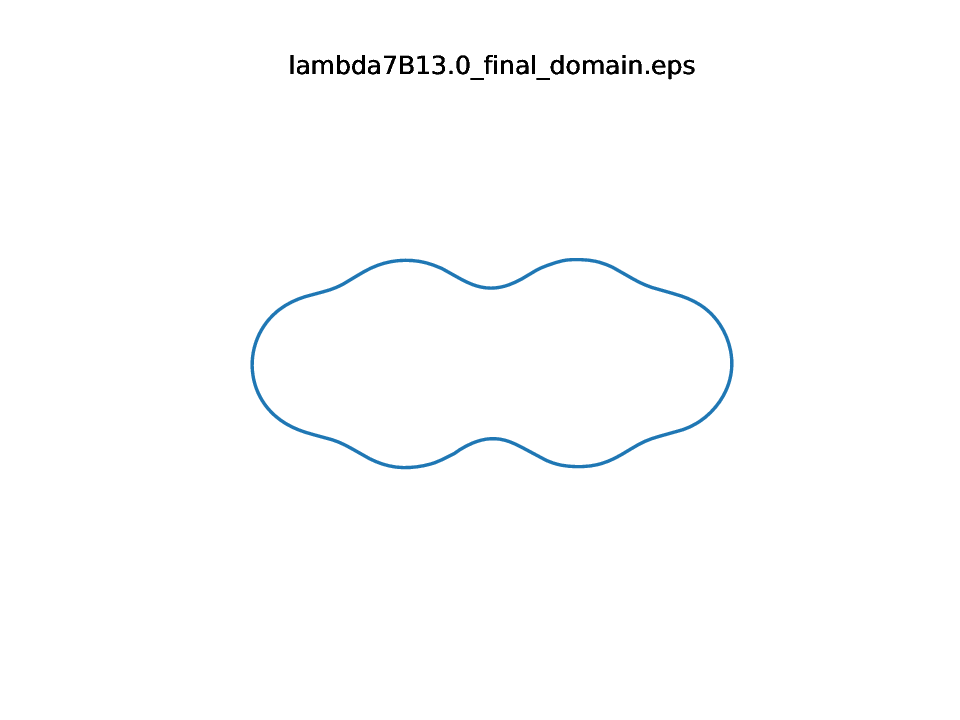}
 \put (30,-15) {$B=13.0$}
\end{overpic}
\begin{overpic}[trim={4cm 3cm 4cm 2.5cm},clip,scale=0.35]{./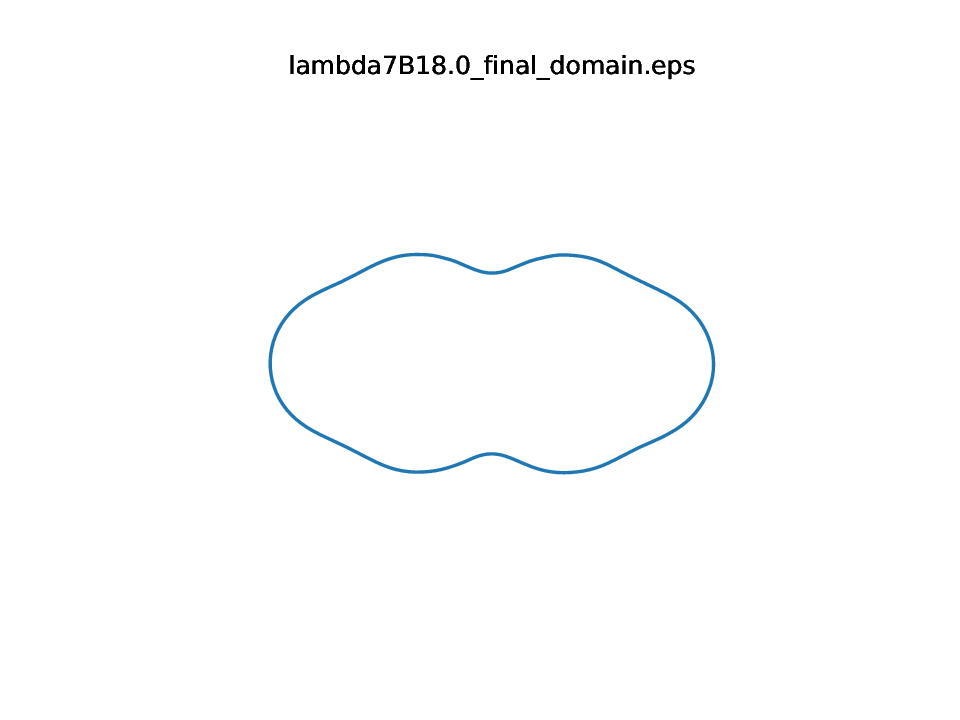}
 \put (30,-15) {$B=18.0$}
\end{overpic}
\begin{overpic}[trim={4cm 3cm 4cm 2.5cm},clip,scale=0.35]{./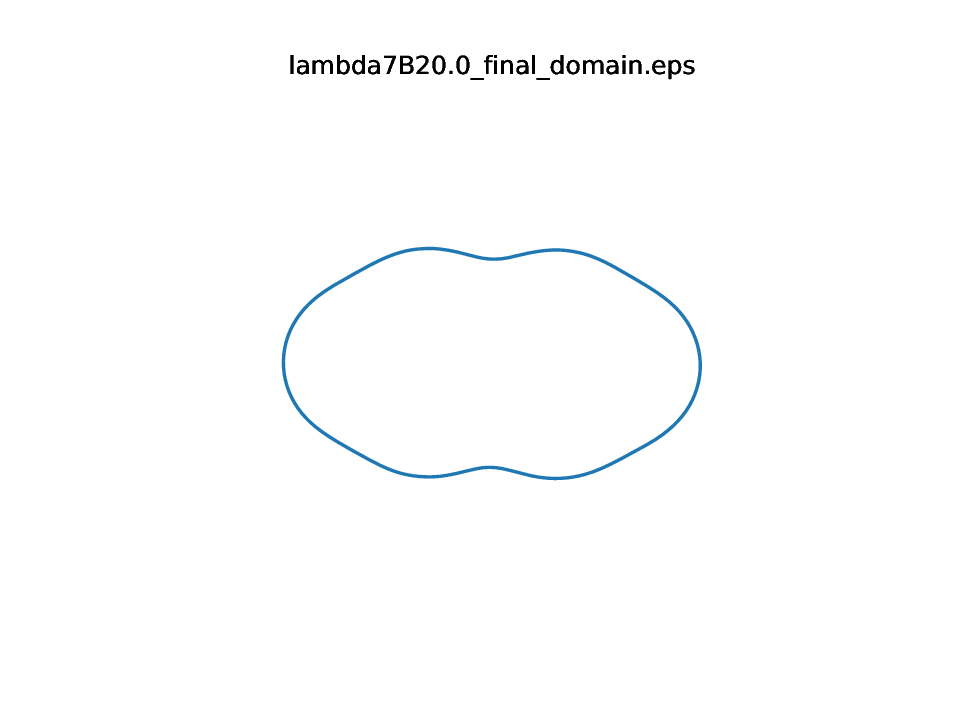}
 \put (30,-15) {$B=20.0$}
\end{overpic}
\vspace{1cm}

\begin{overpic}[trim={4cm 3cm 4cm 2.5cm},clip,scale=0.35]{./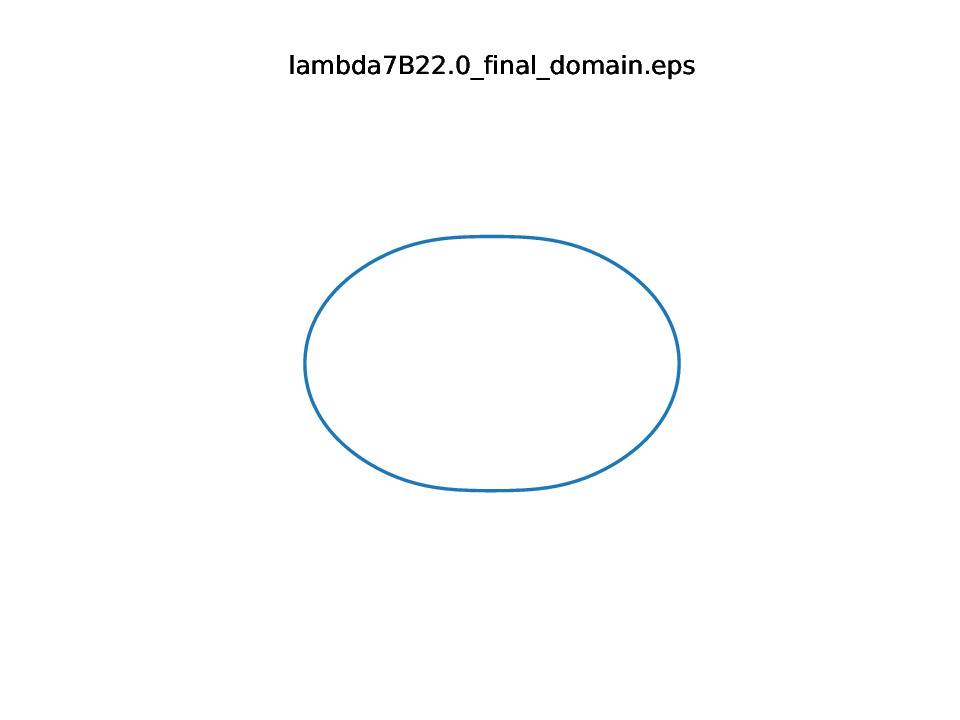}
 \put (30,-15) {$B=22.0$}
\end{overpic}
\begin{overpic}[trim={4cm 3cm 4cm 2.5cm},clip,scale=0.35]{./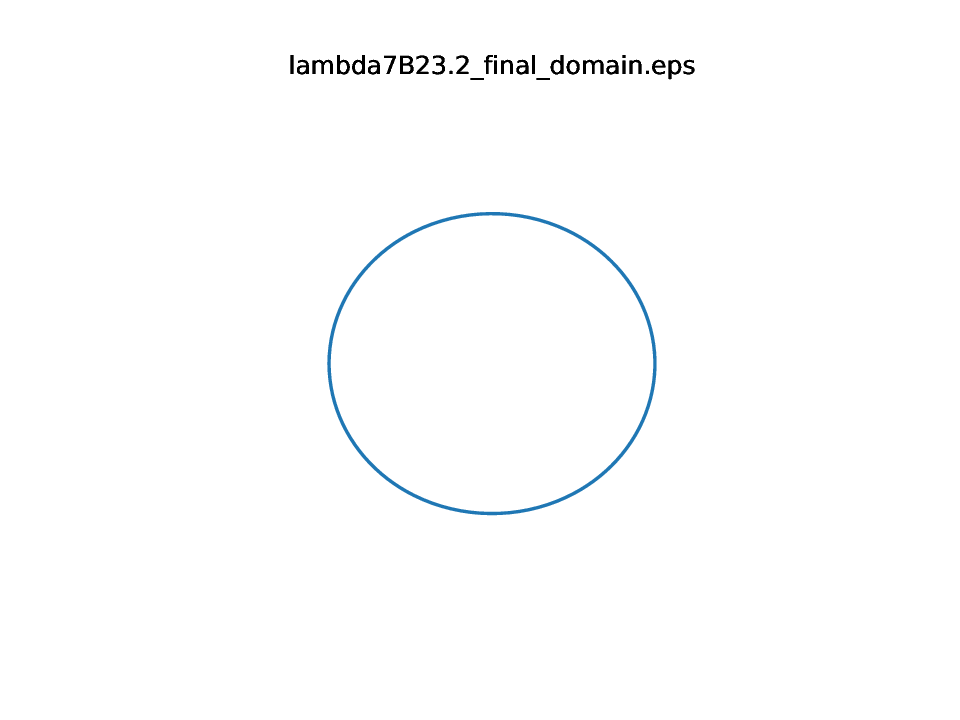}
 \put (30,-15) {$B=23.2$}
\end{overpic}
\begin{overpic}[trim={4cm 3cm 4cm 2.5cm},clip,scale=0.35]{./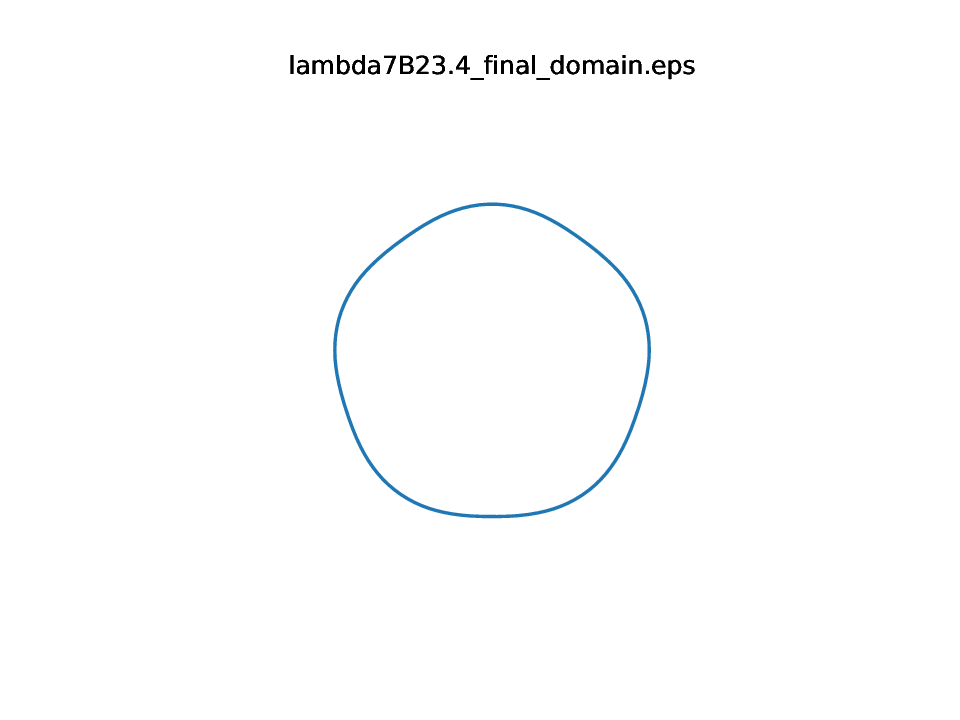}
 \put (30,-15) {$B=23.4$}
\end{overpic}
\begin{overpic}[trim={4cm 3cm 4cm 2.5cm},clip,scale=0.35]{./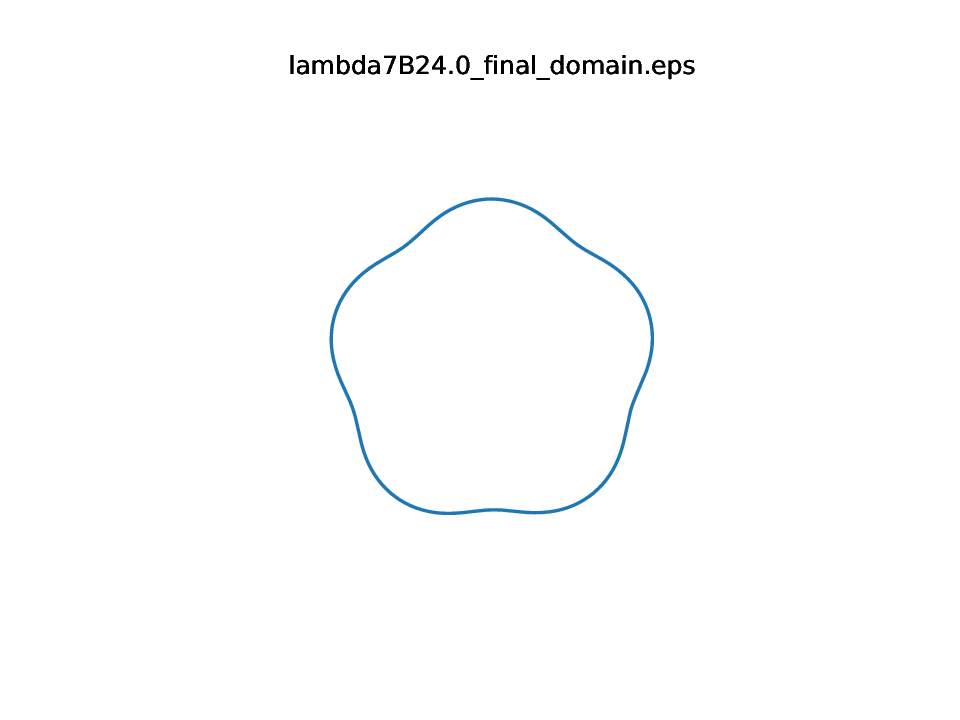}
 \put (30,-15) {$B=24.0$}
\end{overpic}
\begin{overpic}[trim={4cm 3cm 4cm 2.5cm},clip,scale=0.35]{./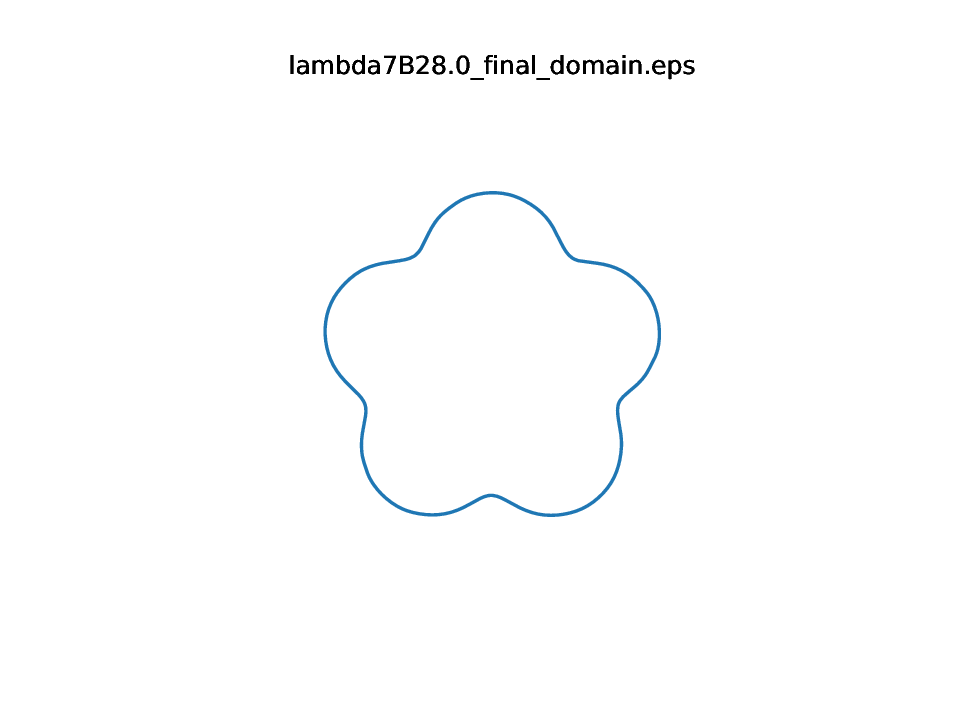}
 \put (30,-15) {$B=28.0$}
\end{overpic}
\vspace{1cm}

\begin{overpic}[trim={4cm 3cm 4cm 2.5cm},clip,scale=0.35]{./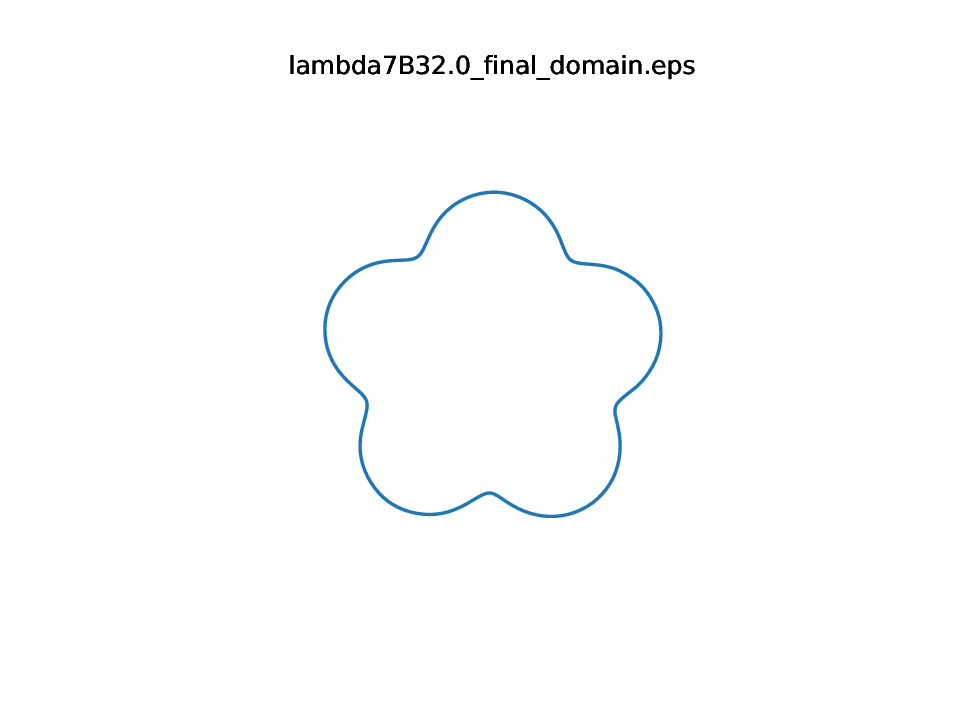}
 \put (30,-15) {$B=32.0$}
\end{overpic}
\begin{overpic}[trim={4cm 3cm 4cm 2.5cm},clip,scale=0.35]{./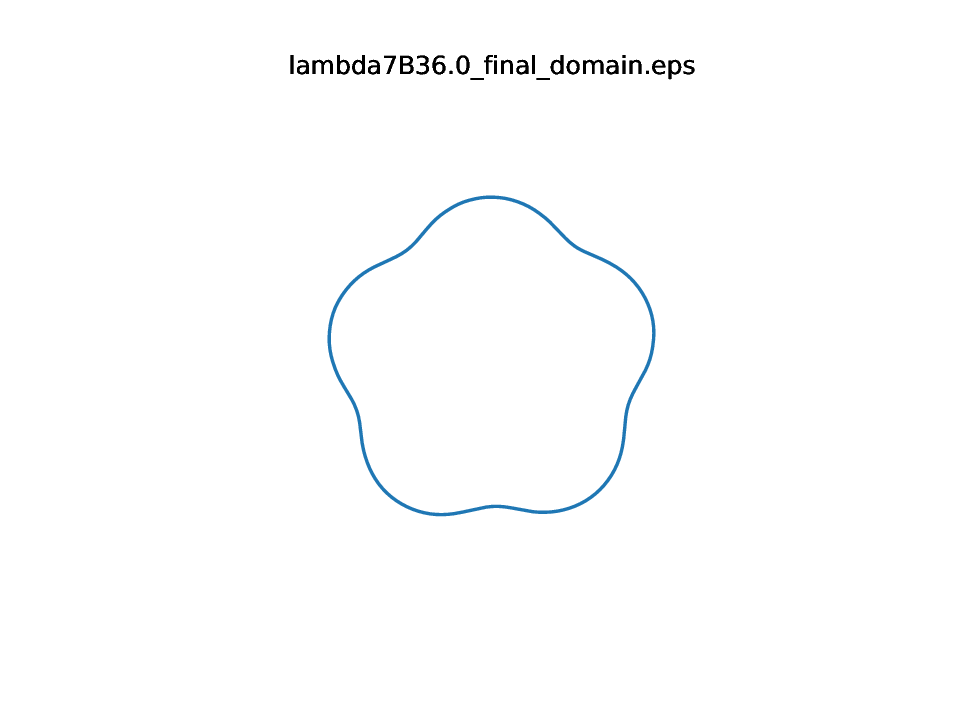}
 \put (30,-15) {$B=36.0$}
\end{overpic}
\begin{overpic}[trim={4cm 3cm 4cm 2.5cm},clip,scale=0.35]{./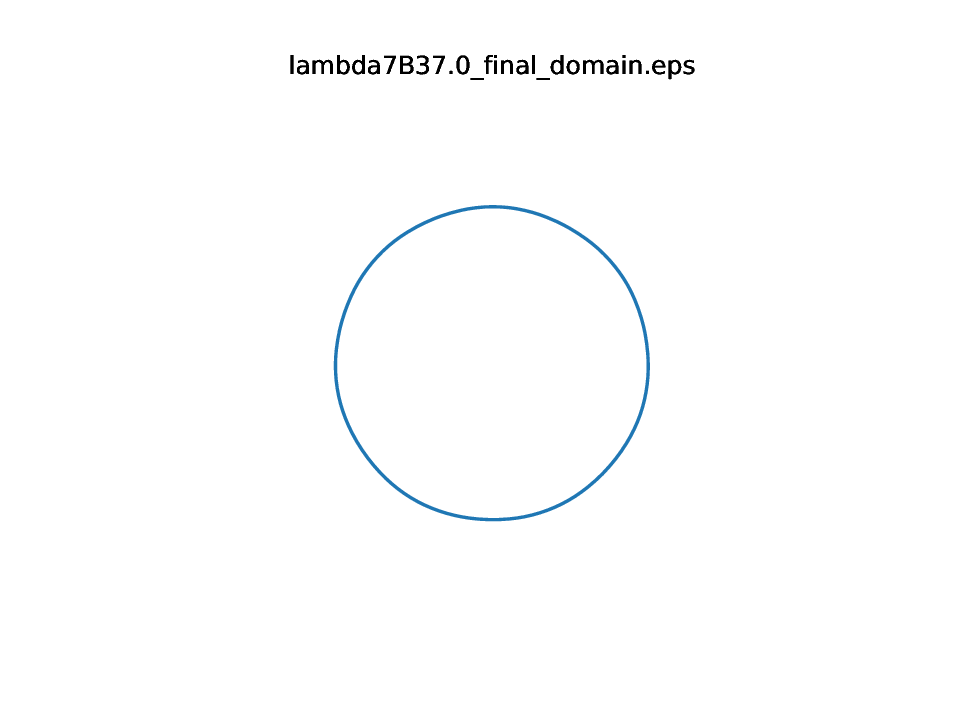}
 \put (30,-15) {$B=37.0$}
\end{overpic}
\begin{overpic}[trim={4cm 3cm 4cm 2.5cm},clip,scale=0.35]{./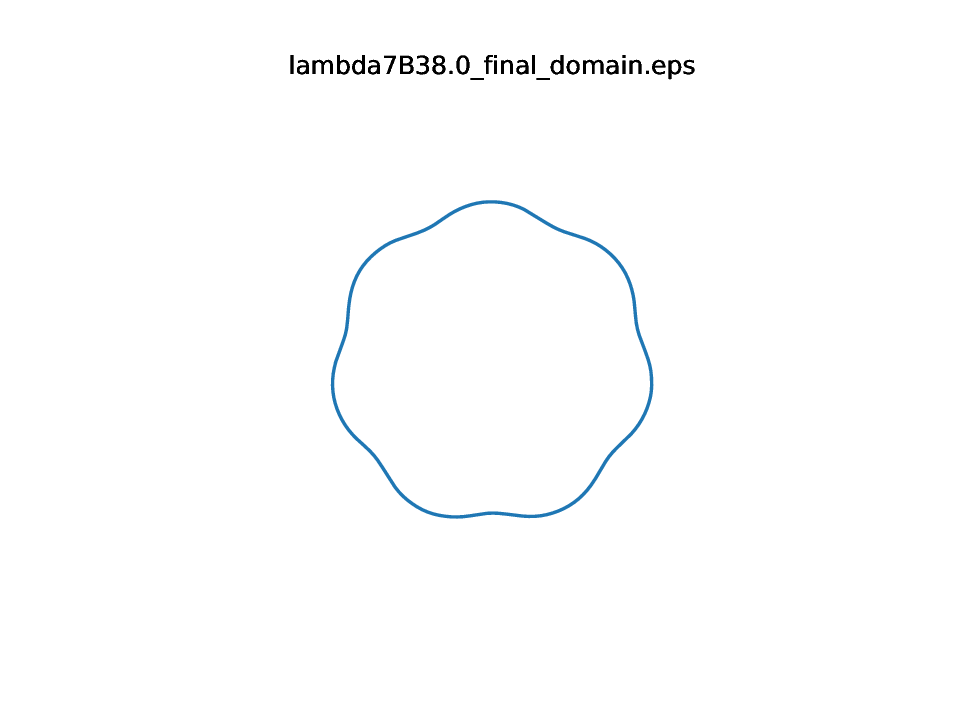}
 \put (30,-15) {$B=38.0$}
\end{overpic}
\begin{overpic}[trim={4cm 3cm 4cm 2.5cm},clip,scale=0.35]{./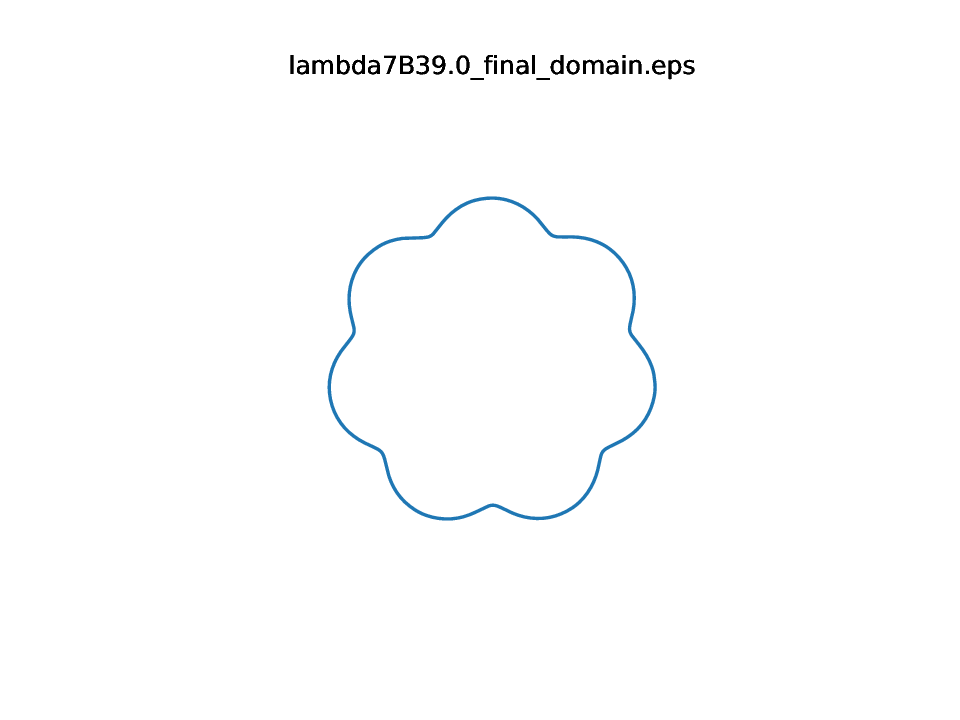}
 \put (30,-15) {$B=39.0$}
\end{overpic}
\vspace{1cm}

\caption{Final domains from minimization of $\lambda_7(\Omega, B)$ for various $B$.} \label{fig:lambda7_final_domain}
\end{figure}

\begin{figure}[h]
\includegraphics[scale=0.5]{./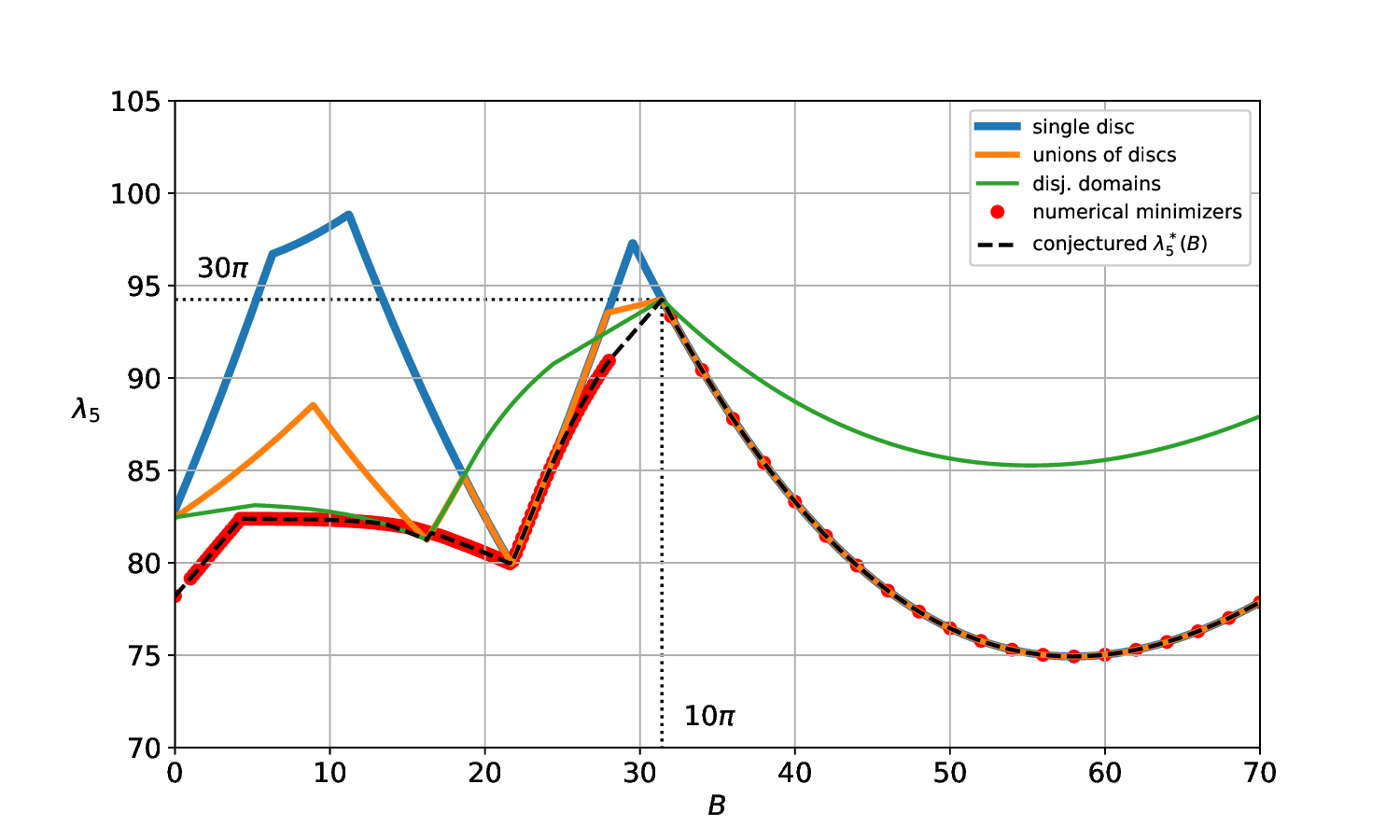}

\includegraphics[scale=0.5]{./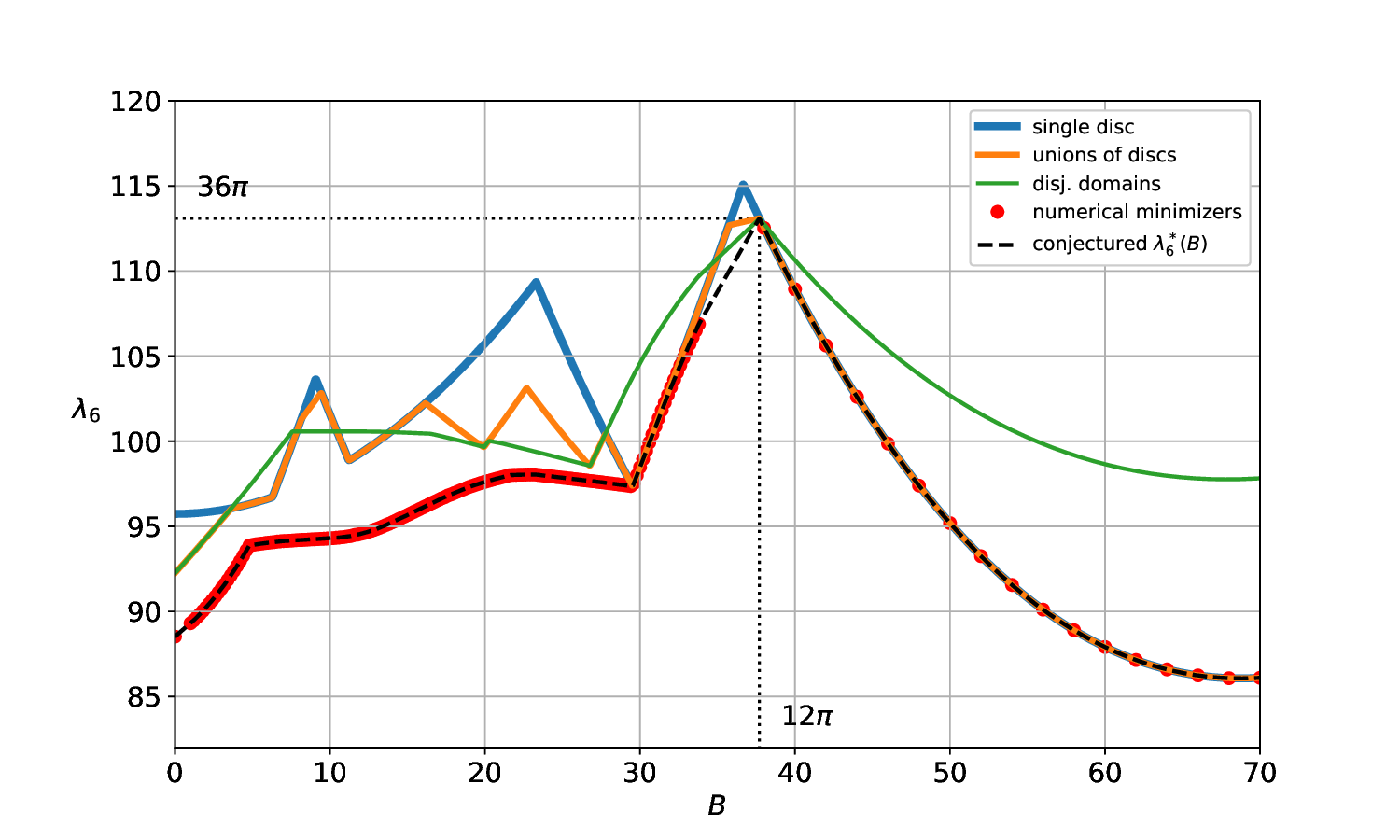}

\includegraphics[scale=0.5]{./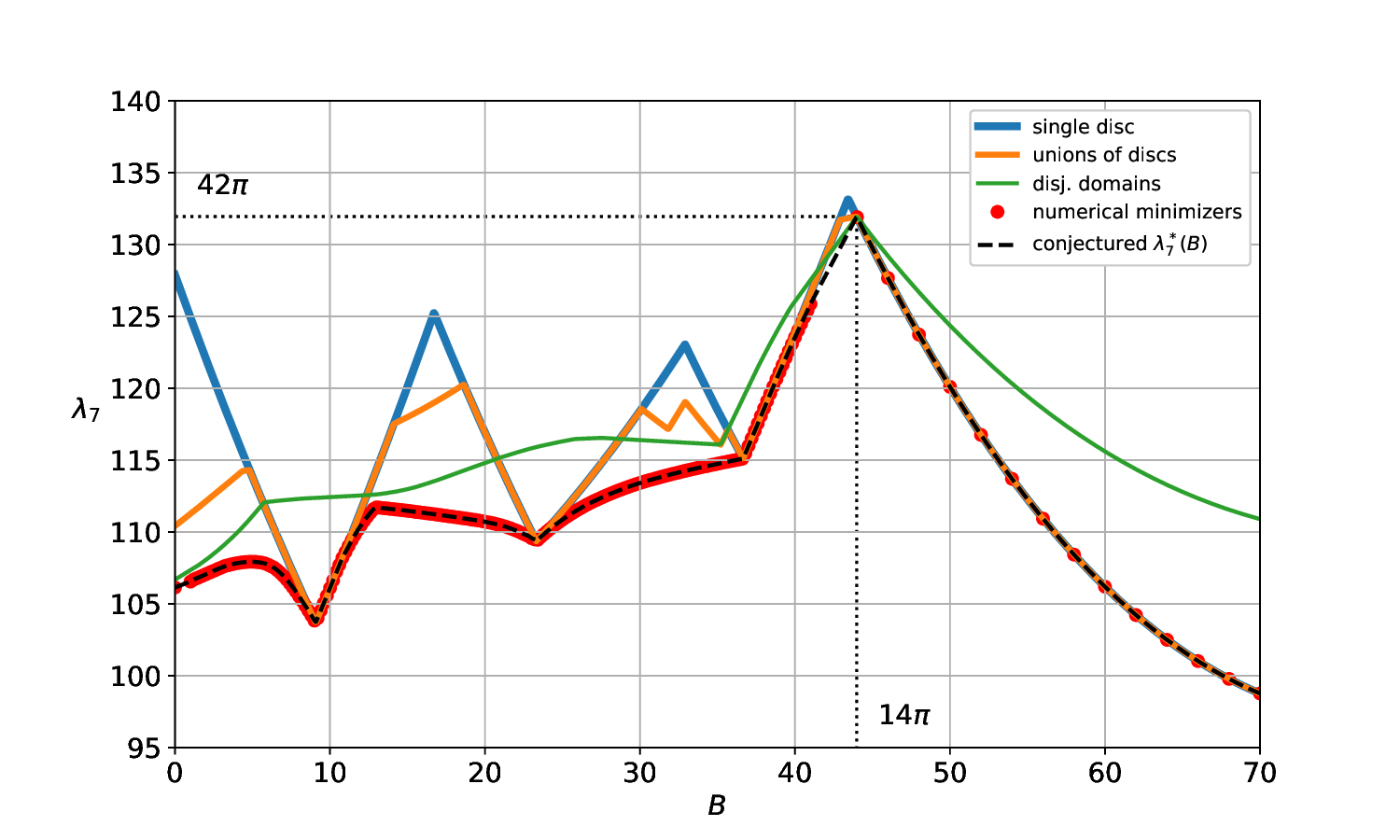}

\caption{Comparisons of $\lambda_5$, $\lambda_6$, $\lambda_7$ of the numerical minimizers (red) with $\lambda_5$, $\lambda_6$, $\lambda_7$ of a single disk (blue), the minimal $\lambda_5$, $\lambda_6$, $\lambda_7$ obtained over arbitrary disjoint unions of disks (orange) and the minimal $\lambda_5$, $\lambda_6$, $\lambda_7$ obtained over disjoint unions of previous minimizers (green) over different field strengths $B$. } \label{fig:compcurve_lambda567}
\end{figure}

\clearpage

\appendix

\section{Disjoint unions of disks}

In this section, we consider the minimization problem \eqref{eq:lambda_n_min_problem} restricted to domains that are disjoint unions of disks, i.e.\ for $k\in\mathbb{N}\cup \{ +\infty\}$ we consider the problem
\begin{align} \label{eq:min_problem_disjoint_union_of_disks}
\min_{\substack{\Omega \text{ disjoint union of} \\ k \text{ or less disks}, \\ |\Omega|=1}} \lambda_n(\Omega, B).
\end{align}
Setting $k=+\infty$ shall be interpreted as posing no restriction on the number of disks, so that we allow any countable number of disks. Clearly, solving problem \eqref{eq:min_problem_disjoint_union_of_disks} requires optimization over the allowed number of disks and their radii. We want to elaborate how this (partly combinatorical) problem can be solved. In the following, let $D$ always denote a disk with $|D|=1$ and let $D_R$ denote a disk of radius $R$.

We remark that we consider problem \eqref{eq:min_problem_disjoint_union_of_disks} because disks are among the very few domains for which the eigenvalue problem can be explicitly solved and thus the spectrum of any disjoint union of disks is explicitly known. We briefly recall the solution of the eigenvalue problem on a disk of radius $R$. 

Eigenvalues of the magnetic Dirichlet Laplacian on $D_R$ are obtained by changing coordinates into polar coordinates and separation of variables. One finds that eigenvalues are related to roots of confluent hypergeometric functions. Specifically, for any fixed $l\in\mathbb{Z}$, the implicit equation
\begin{align}
M \left(\frac{1}{2} \left( l+ |l|+1- \frac{ \lambda}{B}    \right)  ,|l|+1, \frac{BR^2}{2}  \right) = 0, \label{eq:magnetic_disk_eigval_transcendental}
\end{align}
where $M(a,b,z)$ denotes Kummer's confluent hypergeometric function, admits a discrete set of solutions in the parameter $\lambda$. The spectrum $\{ \lambda_n(D,B) \}_{n=1}^\infty$ of the magnetic Dirichlet Laplacian on $D_R$ is then gained by taking the union of all solutions $\lambda$ over all $l\in\mathbb{Z}$ and sorting them in increasing order (potentially counting multiplicities). Note that since equation \eqref{eq:magnetic_disk_eigval_transcendental} can be numerically solved by a simple root search algorithm, the entire spectrum of the disk at any field strength can be computed to very high accuracy.

 %The scaling identity Proposition \ref{prop:scaling_identity} allows another great simplification. Any eigenvalue of a disk of arbitrary radius can be expressed in terms of the eigenvalues of the renormalized disk $D$. 

Let us now come back to \eqref{eq:min_problem_disjoint_union_of_disks}. Suppose $\Omega$ consists of finitely many, say $k$, disks. If $k=1$, we have nothing to optimize in view of \eqref{eq:min_problem_disjoint_union_of_disks}. The minimal $n$-th eigenvalue for one disk of measure one \textit{is} its $n$-th eigenvalue. We set
\begin{align}
\Lambda_{n,1} (B) = \lambda_n(D,B).
\end{align}
Then, $\Lambda_{n,1} (B)$ is the minimum of \eqref{eq:min_problem_disjoint_union_of_disks} for $k=1$.

Now let $k >1$. Assume $\Omega = D_1 \cup ... \cup D_k$ where the $D_j$ are disjoint disks. The set of eigenvalues of $\Omega = D_1 \cup ... \cup D_k$ is the union of all eigenvalues of the $D_j$. This means that there exist sequences $(i_k)_{k\in\mathbb{N}}$ and $(j_k)_{k\in\mathbb{N}}$ such that
\begin{align}
\lambda_k(\Omega,B) = \lambda_{i_k}(D_{j_k},B).
\end{align}
The scaling identity allows us to write for each disk $D_j$ and any $i$
\begin{align} \label{eq:lambda_i_D-j_rescal}
\lambda_i(D_j,B) = \frac{1}{|D_j|} \lambda_i\left(|D_j|^{-\frac{1}{2}} D_j, |D_j| B\right) = \frac{1}{|D_j|} \lambda_i\left(D, |D_j| B\right).
\end{align}
The spectrum of $\Omega$ is hence entirely determined by the spectrum of the normalized disk $D$ at different field strengths.

For $j=1,..., k$ we define 
\begin{align} \label{eq:n-j_cases}
n_j := \begin{cases}
 \max\{ i_k \,: \, j_k=j \text{ and } k\leq n \}, & \text{ if there exists }k\leq n \text{ with } j_k=j, \\
 0, & \text{ else}.
 \end{cases}
\end{align}
The number $n_j$ tells us how many of the $n$ lowest eigenvalues of $\Omega$ are associated with the disk $D_j$. Note that $n_1+...+n_k = n$, which means that $(n_1,..., n_k)$ is a partition of $n$. For the eigenvalue $\lambda_n(\Omega,B)$ now holds
\begin{align}\label{eq:lambda_n-equal_maximum}
\lambda_n(\Omega,B) = \lambda_{i_n}(D_{j_n},B) = \max\left\{ \lambda_{n_1}(D_{1},B),...,  \lambda_{n_k}(D_{k},B) \right\},
\end{align}
where we set $\lambda_0(D_j,B):=0$.
In fact, it holds
\begin{align} \label{eq:lambda_n-equal_minimum_maximum}
\lambda_n(\Omega,B) =\min_{\substack{(n_j')_{j=1}^k\in N_{n,k} }} \max\{ \lambda_{n_1'}(D_{1},B),...,  \lambda_{n_k'}(D_{k},B) \}
\end{align}
where
\begin{align}
N_{n,k} &:= \left\{ (n_j)_{j=1}^k \in (\mathbb{N}\cup \{ 0\})^k \, : \,  \sum_{j=1}^k n_j = n \right\}
\end{align}
is the set of partitions of $n$. To see this, it is only necessary to show ''$\leq$'' in \eqref{eq:lambda_n-equal_minimum_maximum}, as the inequality with ''$\geq$'' follows immediately from \eqref{eq:lambda_n-equal_maximum}. Indeed, for any $(n_j')_{j=1}^k \in N_{n,k}$ with $(n_j')_{j=1}^k \neq (n_j)_{j=1}^k$ there exists some $n_j' \geq n_j + 1$. But given the definition of the $n_j$, we must have $\lambda_{n_j+1}(D_j,B) \geq  \lambda_n(\Omega, B)$ for any $j$ and therefore 
\begin{align}
\max\{ \lambda_{n_1'}(D_{1},B),...,  \lambda_{n_k'}(D_{k},B) \} \geq \lambda_{n_j'}(D_{j},B) \geq \lambda_n(\Omega,B).
\end{align}
This shows \eqref{eq:lambda_n-equal_minimum_maximum}.

Expression \eqref{eq:lambda_n-equal_minimum_maximum} for $\lambda_n(\Omega,B)$ has the advantage that it does not depend on the explicit numbers $n_j$. It only depends on the size of the disks $D_j$. Setting $t_j:=|D_j|$ for $j=1,...,k$ and using \eqref{eq:lambda_i_D-j_rescal} yields the expression
\begin{align} \label{eq:lambda_n_minmax_tj}
\lambda_n(\Omega,B) = \min_{\substack{(n_j)_{j=1}^k\in N_{n,k} }} \max \left\lbrace \frac{\lambda_{n_1}(D,t_1 B)}{t_1}, ... , \frac{\lambda_{n_k}(D,t_k B)}{t_k} \right\rbrace.
\end{align} 
If $\Omega$ is minimizes $\lambda_n(\Omega,B)$, then the measures $t_j$ of the disks $D_j$ must minimize \eqref{eq:lambda_n_minmax_tj}, so we arrive at the minimization problem
\begin{align} \label{eq:lambda_n_opt_disks_k_disks}
\Lambda_{n,k}(B)=\min_{\substack{(n_j)_{j=1}^k\in N_{n,k}  \\ (t_j)_{j=1}^k \in T_k  }} \max \left\lbrace \frac{\lambda_{n_1}(D,t_1 B)}{t_1}, ... , \frac{\lambda_{n_k}(D,t_k B)}{t_k} \right\rbrace.
\end{align}
where we set
\begin{align}
T_k &:= \left\{ (t_j)_{j=1}^k \in [0,1]^k \, : \,  \sum_{j=1}^k t_j = 1 \right\}.
\end{align}
Formally, the expressions $\lambda_n(D,tB)/t$ in \eqref{eq:lambda_n_opt_disks_k_disks} is not defined when $t=0$, but we think of it continued in $t=0$ by $+\infty$ if $n\geq 1$ and by $0$ if $n=0$. From continuity of $t\mapsto \lambda_n(D,tB)/t$ and the compactness of $T_k \subset [0,1]^k$, it can be shown that the min-max-problem \eqref{eq:lambda_n_opt_disks_k_disks} admits a solution.

If $(n_j^*)_{j=1}^k,(t_j^*)_{j=1}^k$ denotes a minimizer of \eqref{eq:lambda_n_opt_disks_k_disks}, we define
\begin{align} \label{eq:omega_k_star_construction}
\Omega_k^* = D_1 \cup .. \cup D_k, \qquad D_j = \begin{cases}
(t_j^*)^{\frac{1}{2}} D+a_j, & t_j^* >0, \\
\varnothing, & t_j^* = 0,
\end{cases}
\end{align}
where $a_j$ are arbitrary translations such that all $D_j$ are pairwise disjoint. The domain $\Omega_k^*$ consists of $k$ or less disjoint disks. By construction,
\begin{align} \label{eq:lambda_n-k_disks_optimale}
\lambda_n(\Omega,B) \geq \lambda_n(\Omega_k^*, B ) = \Lambda_{n,k}(B)
\end{align}
for any $\Omega$ with $|\Omega|=1$ consisting of $k$ disjoint disks. However, by setting $n_k=0$ and $t_k=0$ in the min-max-problem \eqref{eq:lambda_n_opt_disks_k_disks} for $k$ disks it is seen that \eqref{eq:lambda_n_opt_disks_k_disks} includes the corresponding min-max-problem \eqref{eq:lambda_n_opt_disks_k_disks} for $k-1$ disks. By induction, \eqref{eq:lambda_n-k_disks_optimale} is true for any $\Omega$ with $|\Omega|=1$ consisting of $k$ or less disjoint disks. In other words, the domain $\Omega_k^*$ solves \eqref{eq:min_problem_disjoint_union_of_disks}. It is a minimizer of $\lambda_n(\Omega,B)$ among domains consisting of $k$ or less disjoint disks with total measure equal to one. The minimum of problem \eqref{eq:min_problem_disjoint_union_of_disks} is precisely $\Lambda_{n,k}(B)$ and \eqref{eq:omega_k_star_construction} gives a minimizer.

We want to stress the fact that \eqref{eq:min_problem_disjoint_union_of_disks} is not necessarily minimized by a domain consisting of exactly $k$ disks. This manifests itself in \eqref{eq:lambda_n_opt_disks_k_disks} when the minimizer $(n_j^*)_{j=1}^k,(t_j^*)_{j=1}^k$ satisfies $t_{j'}^*=0$ for some $j'$. This requires $n_{j'}^* = 0$ and can be intepreted in the following way. 

Instead of $\Omega_k^*=D_1 \cup ...\cup D_k$ with measures $t_j=|D_j| \geq 0$ consider $\Omega' =D_1' \cup ...\cup D_k'$ with measures $t_j'=|D_j'|>0$ for all $j$. If $t_{j'}'$ is very close to zero for some $j'$, we find that $\lambda_1(D_{j'}',B) > \lambda_n(\Omega',B)$. This implies that any function in the spectral subspace spanned by the eigenfunctions to the eigenvalues $\lambda_1(\Omega',B),...,\lambda_n(\Omega',B)$ is supported on $\Omega' \setminus D_{j'}$. By the variational principle, we conclude that if we discard $D_{j'}$ and blow up all other disks such that the resulting domain has measure one, the $n$-th eigenvalue will decrease, i.e.
\begin{align} \label{eq:discard_disk}
\lambda_n\left(\frac{\Omega' \setminus D_j}{|\Omega' \setminus D_j|^\frac{1}{2}},B \right)\leq \lambda_n(\Omega',B).
\end{align}
Therefore, the disk $D_{j'}$ can be fully collapsed and a minimizer of \eqref{eq:min_problem_disjoint_union_of_disks} can be found among domains consisting of less than $k$ disks. 

If $\Omega =D_1 \cup ... \cup D_k$ consists of $k>n$ disks, then $n_{j'} = 0$ for some $j'$. Iteration of the above argument shows that a minimizer $\Omega_k^*$ of \eqref{eq:min_problem_disjoint_union_of_disks} for $k>n$ is found among unions of $n$ or less disks and $\Lambda_{n,k}(B) = \Lambda_{n,n}(B) $ for any $k>n$. The same is true for domains consisting of countably many disks where one can collapse all but $n$ disks.

Problem \eqref{eq:lambda_n_opt_disks_k_disks} can be solved numerically. It simplifies considerably if one defines the auxiliary problems
\begin{align} \label{eq:Lambda_nk-tilde}
\tilde{\Lambda}_{n,k} (B) = \min_{\substack{(n_j)_{j=1}^k\in \tilde{N}_{n,k}  \\ (t_j)_{j=1}^k \in  \tilde{T}_k  }} \max \left\lbrace \frac{\lambda_{n_1}(D,t_1 B)}{t_1}, ... , \frac{\lambda_{n_k}(D,t_k B)}{t_k} \right\rbrace.
\end{align}
where 
\begin{align}
\tilde{N}_{n,k} = \left\{ (n_j)_{j=1}^k \in \mathbb{N}^k \, : \,  \sum_{j=1}^k n_j = n \right\}, \qquad
\tilde{T}_k = \left\{ (t_j)_{j=1}^k \in (0,1)^k \, : \,  \sum_{j=1}^k t_j = 1 \right\}.
\end{align}
The minimum of \eqref{eq:Lambda_nk-tilde} is attained in $\tilde{T}_k$, because the expression $\max\{... \}$ in \eqref{eq:Lambda_nk-tilde} is a continuous function of the $t_j$ and diverges as one of the $t_j$ approaches zero. 

Note that 
\begin{align}
\lambda_n(\Omega, B) = \min_{\substack{(n_j)_{j=1}^k\in \tilde{N}_{n,k}  } } \max \left\lbrace \frac{\lambda_{n_1}(D,t_1 B)}{t_1}, ... , \frac{\lambda_{n_k}(D,t_k B)}{t_k} \right\rbrace
\end{align}
holds only if $n_j \geq 1$ for any $j$. The case of collapsing disks is hence artificially excluded in problem \eqref{eq:Lambda_nk-tilde}, but to obtain the solution of \eqref{eq:lambda_n_opt_disks_k_disks}, one simply needs to check against all auxiliary problems \eqref{eq:Lambda_nk-tilde} with a smaller number of disks. In other words,
\begin{align} \label{eq:Lambda_n-k_min_tilde_Lambda_n-k}
\Lambda_{n,k}(B) = \min \{ \tilde{\Lambda}_{n,1} (B), \tilde{\Lambda}_{n,2} (B), ... , \tilde{\Lambda}_{n,k} (B)\}.
\end{align}
It is thus sufficient to compute each $\tilde{\Lambda}_{n,k} (B)$. Some further simplifications can be made. If $k=1$, we have 
\begin{align}
\tilde{\Lambda}_{n,1} (B) = \Lambda_{n,1} (B) = \lambda_n(D,B).
\end{align}
If $k=n$, the auxiliary function $\tilde{\Lambda}_{n,k} (B)$ also has a simple form. The map $t \mapsto \lambda_1 (D, t B)/t$ is strictly decreasing with $t$, so the minimum of \eqref{eq:Lambda_nk-tilde} is attained for $t_1=...=t_n = 1/n$, i.e.\ $\Omega$ consisting of $n$ disks of equal size. Hence, 
\begin{align}
\tilde{\Lambda}_{n,n}(B) =  n \lambda_{1}\left(D,\frac{B}{n}\right).
\end{align}
If $ 1<k<n$, symmetry arguments let us further reduce the auxiliary problem \eqref{eq:Lambda_nk-tilde}. By relabeling, it is only necessary to consider $(n_j)_{j=1}^k \in \mathbb{N}^k$ that are partitions of $n$, ordered in descending order. For small $n$, this means we can replace $\tilde{N}_{n,k}$ in \eqref{eq:Lambda_nk-tilde} by $N_{n,k}'$ from the following table
\begin{center}
\begin{tabular}{c|c|l}
$n$ & $k$ & $N_{n,k}'$ \\ \hline
$3$ & $2$ & $\{(2,1) \}$ \\ \hline
$4$ & $2$ & $\{(3,1), (2,2) \}$ \\
$4$ & $3$ & $\{(2,1,1) \}$\\ \hline
$5$ & $2$ & $\{(4,1), (3,2) \}$ \\
$5$ & $3$ & $\{(3,1,1),(2,2,1) \}$\\
$5$ & $4$ & $\{(2,1,1,1) \}$
%$7$ & $2$ & $\{(6,1), (5,2), (4,3) \}$ \\
%$7$ & $3$ & $\{(5,1,1),(4,2,1),(3,3,1),(3,2,2) \}$\\
%$7$ & $4$ & $\{(4,1,1,1),(3,2,1,1),(2,2,2,1) \}$\\
%$7$ & $5$ & $\{(3,1,1,1,1),(2,2,1,1,1) \}$\\
%$7$ & $6$ & $\{(2,1,1,1,1,1) \}$\\
\end{tabular}
\qquad 
\begin{tabular}{c|c|l}
$n$ & $k$ & $N_{n,k}'$ \\ \hline
$6$ & $2$ & $\{(5,1), (4,2), (3,3) \}$ \\
$6$ & $3$ & $\{(4,1,1),(3,2,1),(2,2,2) \}$\\
$6$ & $4$ & $\{(3,1,1,1),(2,2,1,1) \}$\\
$6$ & $5$ & $\{(2,1,1,1,1) \}$\\ \hline
$...$ & $...$ & \quad  $...$\\ 
 &  & \\ 
%$7$ & $2$ & $\{(6,1), (5,2), (4,3) \}$ \\
%$7$ & $3$ & $\{(5,1,1),(4,2,1),(3,3,1),(3,2,2) \}$\\
%$7$ & $4$ & $\{(4,1,1,1),(3,2,1,1),(2,2,2,1) \}$\\
%$7$ & $5$ & $\{(3,1,1,1,1),(2,2,1,1,1) \}$\\
%$7$ & $6$ & $\{(2,1,1,1,1,1) \}$\\
\end{tabular}
\end{center}

\begin{figure}[t]
\begin{center}
\includegraphics[scale=0.5]{./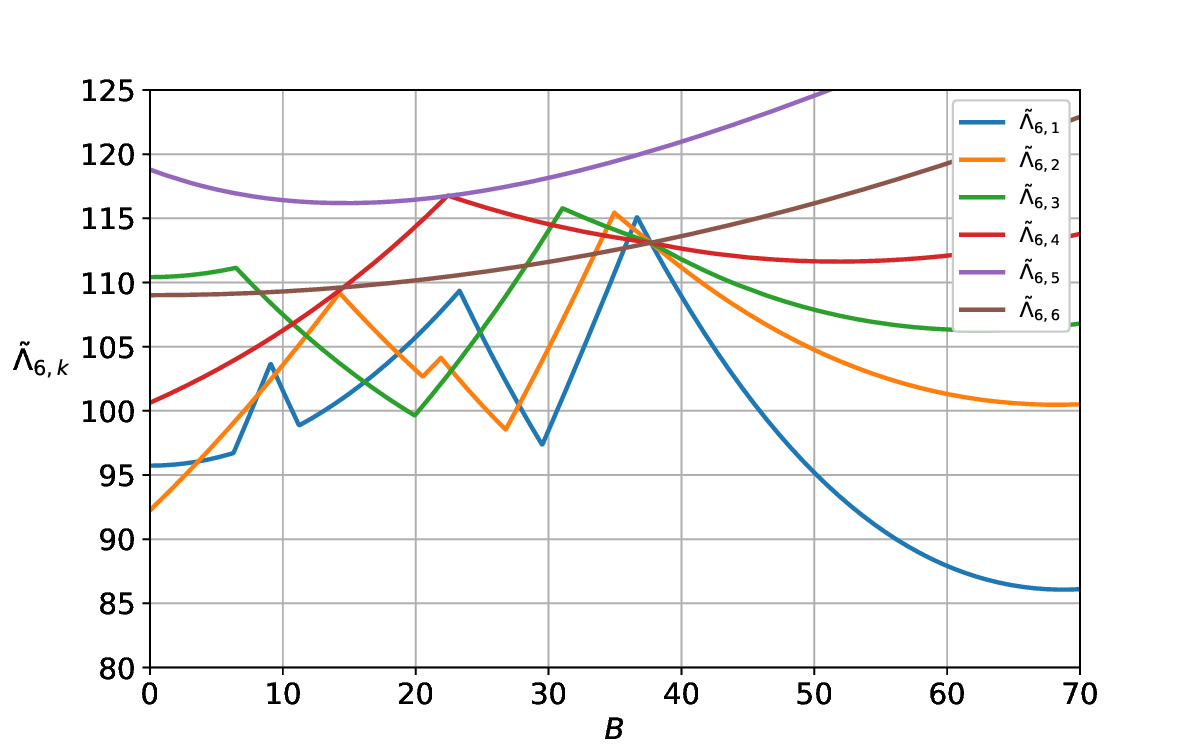}
\end{center}
\caption{The auxiliary functions $\tilde{\Lambda}_{n,k}(B)$ for $n=6$.}\label{fig:Lambda_n-k_aux}
\end{figure}

Figure \ref{fig:Lambda_n-k_aux} shows the auxiliary functions $\tilde{\Lambda}_{n,k}(B)$ for $1 \leq k \leq n=6$. By \eqref{eq:Lambda_n-k_min_tilde_Lambda_n-k}, the minimum of problem \eqref{eq:min_problem_disjoint_union_of_disks} is equal to the minimum of the first $k$ auxiliary curves. This minimum is easily obtained graphically. The picture also allows to identify the number of disks of minimizing domains. If $k'$ is such that $\tilde{\Lambda}_{n,k'}(B) = \min \{\tilde{\Lambda}_{n,1}(B),..., \tilde{\Lambda}_{n,k}(B) \} = \Lambda_{n,k}(B)$, then \eqref{eq:min_problem_disjoint_union_of_disks} has a minimizer consisting of $k'$ disjoint disks. In the figure, this means that for example at $B=20.0$, the minimum of \eqref{eq:min_problem_disjoint_union_of_disks} is attained by one disk if $k=1$, by two disjoint disks if $k=2$ and by three disks if $k\geq 3$. 

A few interesting phenomenons can be observed in the figure for the unrestricted ($k=+\infty$) minimization problem \eqref{eq:min_problem_disjoint_union_of_disks}: For a short interval just before $B = 12 \pi $, the unique minimizer is given by six disjoint disks (necessarily of equal size). At $B=12\pi $, we find five distinct minimizers, consisting of one, two, three, four and six disks. The minimization problem is highly degenerate at this field strength. For $B>12\pi $, the minimization problem \eqref{eq:min_problem_disjoint_union_of_disks} is always minimized by a single disk. 

It seems a similar structure for minimizers among disjoint unions of disks can be found for any $n\geq 2$. For $B$ just below $2\pi n$ (or $\phi$ just below $n$), the minimizer of \eqref{eq:min_problem_disjoint_union_of_disks} is given by $n$ equally sized disks. At $B=2\pi n$ (or $\phi = n$), the minimization problem \eqref{eq:min_problem_disjoint_union_of_disks} has multiple distinct minimizers. Both a single disk and $n$ equally sized disks are always minimizers at $B=2\pi n$. For $B>2\pi n$ (or $\phi> n$), the minimizer of problem \eqref{eq:min_problem_disjoint_union_of_disks} is unique and given by a single disk. We admit that we do not give rigorous proofs for these observations here. They would be concerned with analysis of the solutions of \eqref{eq:magnetic_disk_eigval_transcendental} and - in our view - primarily a technical issue.

\section*{Acknowledgements}

The author is thankful toward Timo Weidl who posed the problem to him. The author also wants to thank James B. Kennedy, Pedro R. S. Antunes and Pedro Freitas for valuable discussions and their hospitality during his visit at Universidade de Lisboa in Summer 2023.

\section*{Code and Data}

The shape optimization code and minimizer data used to generate the plots and figures in the results section is available at \url{https://github.com/matthias-baur/mssopython}.

\printbibliography

\end{document}